    \newtheorem{theorem}{Theorem}
    \newtheorem{corollary}[theorem]{Corollary}
    \newtheorem{remark}[theorem]{Remark}
    \newtheorem{lemma}[theorem]{Lemma}
    \newtheorem{conjecture}[theorem]{Conjecture}
	\tikzset{>=latex}
		\newcommand{\ball}[3][white]{%
			\begin{scope}[shift = {(#2,#3)}]
				\ifnum\pdf@strcmp{#1}{white}=\z@%
					\shade[my ball color=#1] (0.15,0.25) circle (0.5);
				\else
					\shade[new ball color=#1] (0.15,0.25) circle (0.5);
				\fi
			\end{scope}
		}
        \newcommand{\urn}[2]{%
			\begin{scope}[shift = {(#1,#2)}]
				\shade[inner color=gray,outer color=white] (0,-0.25) ellipse (1.75 and 0.75);
				\fill[color=white] (0,0) ellipse (1.25 and 0.5);
				\draw (-1.195,-0.15) .. controls (-2,1) and (-0.70,1.75) .. (-1.15,2.95);
				\draw (1.195,-0.15) .. controls (2,1) and (0.70,1.75) .. (1.15,2.95);
				\draw (0,3) ellipse (1.15 and 0.5);
				\draw (0,0) ellipse (1.25 and 0.5);
			\end{scope}
        }
    \DeclareRobustCommand{\stirling}{\genfrac\{\}{0pt}{}}
	\newcommand*\pFq[6][8]{%
		\begingroup 
		\pFqmuskip=#1mu\relax
		\mathchardef\normalcomma=\mathcode`,
		\mathcode`\,=\string"8000
		\begingroup\lccode`\~=`\,
		\lowercase{\endgroup\let~}\pFqcomma
		{}_{#2}F_{#3}{\left[\genfrac..{0pt}{}{#4}{#5};#6\right]}%
		\endgroup
	}
	\newcommand{\pFqcomma}{{\normalcomma}\mskip\pFqmuskip}
\begin{document}

\begin{frontmatter}

	\title{On Occupancy Moments\\ and Bloom Filter Efficiency}
	\runtitle{On Occupancy Moments and Bloom Filter Efficiency}

	\begin{aug}
		\author{\snm{Jonathan Burns}\corref{}\ead[label=e1]{jburns@ionic.com}}%
		\runauthor{Burns}
		\affiliation{Ionic Security}
		\address{Ionic Security Inc., 1170 Peachtree St. NE, Suite 400, Atlanta, GA 30309\\ \printead{e1}}
	\end{aug}


\begin{abstract}
Two multivariate committee distributions are shown to belong to Berg's family of factorial series distributions and Kemp's family of generalized hypergeometric factorial moment distributions. 
Exact moment formulas, upper and lower bounds, and statistical parameter estimators are provided for the classic occupancy and committee distributions.

The derived moment equations are used to determine exact formulas for the false-positive rate and efficiency of Bloom filters -- probabilistic data structures used to solve the set membership problem.
This study reveals that the conventional Bloom filter analysis overestimates the number of hash functions required to minimize the false-positive rate, and shows that Bloom filter efficiency is monotonic in the number of hash functions.
\end{abstract}

	\begin{keyword}[class=MSC]
		\kwd[Primary ]{60C05}           
		\kwd{68R05}                     
		\kwd[; secondary ]{94A24}       
        \kwd{33C20}                     
	\end{keyword}

	\begin{keyword}
		\kwd{classic occupancy problem}
		\kwd{committee problem}
		\kwd{Stevens-Craig distribution}
		\kwd{factorial series distribution}
		\kwd{generalized hypergeometric factorial moment distribution}
		\kwd{Bloom filter}
		\kwd{set membership filter}
		\kwd{approximate membership query}
        \kwd{false-positive rate}
        \kwd{filter efficiency}
	\end{keyword}

\end{frontmatter}



\section{Introduction}

We consider three occupancy problems where $n$ distinguishable balls are cast into $m$ distinguishable urns:
    \begin{itemize}
		\item \textit{Classic Occupancy Problem}: Cast $n$ balls into $m$ urns. The occupancy number is number of urns containing a ball. \smallskip
        \item \textit{Committee Problem}: A group of $m$ individuals are asked to randomly form $c$ committees of size $k$ where an individual can serve on more than one committee. The occupancy number is the amount of individuals that serve on at least one committee. ($n = c \, k$) \smallskip
        \item \textit{Committee Intersection Problem}: A group of $m$ individuals are randomly selected to serve on committees for $c$ departments where $n_d$ committees of $k_d$ individuals are required for each department $d$. The occupancy number is the amount of individuals serving on a committee from each department. ($n = n_1 k_1 + n_2 k_2 + \cdots + n_c k_c$)
	\end{itemize}

The classic occupancy problem is well-known in probability and combinatorics and there is wealth of literature on the topic; see Feller \cite[Ch. 4, Sec 2]{Feller1968Probability}, Johnson and Kotz \cite[Ch. 3]{Johnson1977Urn}, Kolchin \cite[Ch. 1--3]{Kolchin1978Random}, Johnson, Kemp, and Kotz \cite[Ch. 10, Sec. 4]{Johnson2005Univariate} and Charalambides \cite[Ch. 4]{Charalambides2005Combinatorial}.

Many occupancy models are rooted in biological problems.
The committee problem is a natural generalization of the classic occupancy problem (where $k=1$) and the chromosome problem \cite{Catcheside1946Chromosome,Feller1968Probability} (where $k=2$) to groups of every size $k$.
Mantel and Pasternack \cite{Mantel1968Class} were the first to explicitly state and discuss the committee problem, but it has since gained its own notoriety \cite{Gittelsohn1969Occupancy,Sprott1969Class,White1971Committee,Johnson1977Urn,Walter1979Generalizations}.
The committee problem is closely related to the problem of estimating the population of an organism by repeatedly capturing, tagging, and releasing samples from the population \cite{Berg1974Factorial}, and the multivariate committee model can be used to estimate the enrichment / depletion of genetic samples in an experiment \cite{Kalinka2014Probability} and used to measure molecule similarity for chemical retrieval \cite{Swamidass2007Fingerprint}.

This paper considers two further generalizations of the committee occupancy distribution, and applies this new theory to analyze the efficiency of Bloom filter data compression structures.


\subsection{Bloom filters}
\label{subsec:intro-Bloom}

Given a domain $D$ and a subset $S \subseteq D$, the \textit{set membership problem} seeks to determine if an arbitrary element $x \in D$ belongs to $S$. 
The set membership problem is trivial if either $S$ or $D/S$ can be easily enumerated, such as checking if a letter is a vowel. 
However, many practical applications depend upon computationally inexpensive solutions to the set membership problem, e.g., spell-checking \cite{Bloom1970Space}, validating whether a web crawler has previously visited a website \cite{Olston2010Web}, record deduplication \cite{Lu2012BloomStore}, scanning for computer viruses \cite{Erdogan2005HashAV}, and matching genetic samples to records in large online biological databases \cite{Solomon2015Large}.
A \textit{set membership filter} is a data structure used to efficiently answer the set membership problem.
Simple lists, hash tables, and trees can be used to solve the set membership problem, but succinct representation of a large set requires data compression.
In space or time constrained environments, a solution that occasionally admits false-positive or false-negative results is often acceptable; such filters are called \textit{approximate set membership filters}.

A \textit{Bloom filter} is an approximate set membership filter generated from a superimposed code \cite{Mooers1947Zatocoding,Roberts1979Partial} in the following manner.
The elements of a set are converted into code words using a random hash function $H:D \rightarrow \{0,1\}^m$ and superimposed onto the zero word. Said another way, a Bloom filter is initialized as $\mathcal{F}=0^m$ and elements $s \in S$ are inserted into the filter one at a time by iterating $\mathcal{F} \leftarrow \mathcal{F} \lor H(s)$ where $\lor$ is the bitwise \texttt{OR} operator.
Once all the members of $S$ are superimposed onto $\mathcal{F}$, an element $x \in D$ can be tested for membership in $S$ by checking that $H(x) \land F = H(x)$ where $\land$ is the bitwise \texttt{AND} operator.
False-negative results are impossible since the statement $H(x) \land F \not= H(x)$ implies that $x \not\in S$, but positive results are inconclusive.
Optimization of a Bloom filter involves constructing $H$ to minimize the false-positive rate for the membership test of $\mathcal{F}$.

Bloom filters have inspired a multitude of approximate set-membership filters that are in widespread use across a variety of database and network applications~\cite{Broder2003Applications,Geravand2013Survey,Luo2018Optimizing}, e.g., Apache's Cassandra~\cite{Lakshman2010Cassandra}, RedisLabs's Redis~\cite{Sanfilippo2009Redis}, and Google's Bigtable~\cite{Chang2006Bigtable} distributed storage systems use Bloom filters to perform preliminary cache checks in memory before executing costly disk seeks.
In this paper, we compare two Bloom filter variants that are constructed using different hash functions:
    \begin{itemize}
        \item \textit{Classic Bloom Filter} \cite{Bloom1970Space}: each superimposed code word contains $k$ \textit{distinct} 1-bits and $m\!-\!k$ distinct 0-bits. \smallskip
        \item \textit{Standard Bloom Filter} \cite{Knuth1973Art}: each superimposed code word is generated using $k$ \textit{independent} hash functions that determine the positions of the 1-bits and the remaining bits are 0-bits.
    \end{itemize}
Modeled as a committee problem, each superimposed code word for a classic Bloom filter is a committee selection from a group of individuals.
The corresponding set membership test determines whether the sampled individuals have previously served on a committee.
Similarly, the superimposed code words for a standard Bloom filter correspond to a sample with replacement from the individuals, e.g., choosing committee members by repeatedly drawing names from a hat and replacing the names after selection.
For convenience, both types of Bloom filters are modeled as occupancy problems involving balls and urns throughout the remainder of this paper.

As it turns out, the false-positive rates and efficiency formulas for classic and standard Bloom filters can be derived from the ordinary and binomial moments of the committee occupancy distribution.


\subsection{Contributions}
\label{subsec:contributions}

We show that multivariate versions of the committee distribution belongs to the families of generalized hypergeometric factorial moment distributions (GHFDs) and factorial series distributions (FSDs), and establish simple upper and lower bounds on its moments.

Moment formulas for the multivariate committee distributions are used to derive new equations for the false-positive rate and efficiency of the standard and classic Bloom filters.
Novel results concerning Bloom filter false-positive and efficiency analysis are also shown, including the following observations:
    \begin{itemize}
        \item the common optimization for classic and standard Bloom filters overestimates the optimal number of hash bits to use for small filters;
        \item the efficiency of a standard Bloom filter approaches a limit of $\ln 2$, and decreases as the number of hash bits increase; and
        \item the efficiency of a classic Bloom filter approaches the information-theoretic upper limit, and decreases as the number of hash bits decrease.
    \end{itemize}
The theory presented in this study is also applicable to the wider class of Bloom filter variants used extensively in modern storage and networking systems.


\subsection{Paper Organization}
\label{subsec:organization}

Section~\ref{sec:occupy} concerns the theory of committee occupancy problems.
Sections~\ref{sec:classic-occupancy} and \ref{subsec:committee-occupancy} give a background for the classic occupancy and committee occupancy distributions in terms of finite differences.
Multivariate union and intersection versions of the committee distribution are introduced in Section~\ref{subsec:multivariate-committee}. 
Special attention is given to the single committee intersection problem in Section~\ref{subsubsec:single-committee-intersection}, which is shown to be a GHFMD and a FSD.
Section~\ref{sec:bounds-on-moments} establishes bounds on the moments of committee distributions, and statistical estimators for the parameters of committee models are provided in Section~\ref{subsec:estimators}.

Section~\ref{sec:bloom} applies the theory of committee occupancy problems to standard and classic Bloom filters.
The multivariate union and intersection committee problems translate directly to unions and intersections of Bloom filters; see Section~\ref{sec:union-intersection-bloom-filters} for details.
The false-positive rate of a standard Bloom filter is shown to be a function of raw moments for the classic occupancy distribution in Section~\ref{subsec:standard-false-positive}, and the false-positive rate of the classic Bloom filter is computed as a function of the binomial moments in Section~\ref{subsec:classic-false-positive}.
A surprising amount of literature fails to distinguish between the standard and classic Bloom filter, and the standard Bloom filter is routinely misattributed to Bloom -- who first reported the classic Bloom filter~\cite{Bloom1970Space}.
Section~\ref{sec:historical-notes} attempts to rectify some of the confusion surrounding the classic and standard Bloom filters and the analysis for of their false-positive rates.
Section~\ref{sec:filter-optimization} explores false-positive minimization analytics for the standard and classic Bloom filter, showing that the common Bloom filter configuration requires more computation and yields a higher false-positive rate than the optimal configuration.

The subject of false-positive optimization pivots to filter efficiency in Section~\ref{sec:filter-efficiency}.
Peak efficiencies for standard Bloom filters are shown in Section~\ref{sec:standard-bloom-efficiency} to decrease monotonically as the number of hash bits increase, and peak efficiencies for the classic Bloom filter are shown in Section~\ref{sec:classic-bloom-efficiency} to increase as the number of hash bits increase.
Configurations that yield the poorest efficiencies are characterized in Section~\ref{subsec:valley-efficiency}, and a few brief remarks about compression of the classic Bloom filter are mentioned in Section~\ref{sec:compressed-bloom-efficiency}.

The study is concluded in Section~\ref{sec:discuss} with brief summary, closing remarks, and a discussion of some open conjectures. 

\section{Occupancy Problems}
\label{sec:occupy}

Let $\omega \in \Omega$ be an outcome of placing distinguishable balls into $m$ distinguishable urns.
For each $i \in [m]$, let $\chi_i:\Omega \to \mathbb{Z}_2$ be the random variable indicating whether the $i$\textsuperscript{th} urn is occupied:
    \begin{equation}  
    \label{eq:occupancy-event}
    \chi_i(\omega) = 
    	\begin{cases}
    		1, & \mbox{the }i\mbox{\textsuperscript{th} urn contains at least 1 ball,} \\ 
    		0, & \mbox{the }i\mbox{\textsuperscript{th} urn is empty.}
		\end{cases}
    \end{equation}
Throughout this study, urn occupancy is considered an \textit{exchangeable event}, i.e., for any possible choices $1 \le i_1 < i_2 < \dots < i_s$ of $s$ subscripts,
    \begin{align*}
    \mathbb{P}\left[ \chi_{i_1} \, \chi_{i_2} \cdots \chi_{i_s} = 1 \right] = p_s
    \end{align*}
depends on $s$ only and not the actual subscripts $i_t$ ($1 \le t \le s$).
The \textit{occupancy number} is the sum of all urn occupancy random variables
	\begin{align*}
	X = \chi_1 + \chi_2 + \dots + \chi_m \, .
	\end{align*}
It is well known~\cite{Barton1959Contagious,Gittelsohn1969Occupancy,Charalambides2005Combinatorial} that the binomial moments for the occupancy number is given by
    \begin{align}
    \label{eqn:binom-moment}
    \mathbb{E}\left[\binom{X}{r}\right] = \binom{m}{r} \, \mathbb{P} \! \left[ \prod\limits_{l=1}^r \chi_l = 1 \right] \, ,
    \end{align}
which gives the p.m.f.~\cite[p.30--32]{Charalambides2005Combinatorial}
    \begin{align}
    \mathbb{P}\left[ X = i \right] = \binom{m}{i} \sum\limits_{j=0}^{m-i} (-1)^{j} \binom{m-i}{j} \, \mathbb{P} \! \left[ \prod\limits_{l=1}^{i+j} \chi_l = 1 \right] \, .
    \end{align}
For some problems, it is easier to focus on the occupancy of a single urn $\chi_i$ rather than $X$ in aggregate, and equation \eqref{eqn:binom-moment} hints that the binomial moments of an occupancy distribution may have a clean presentation.


\subsection{Classic Occupancy Problem}
\label{sec:classic-occupancy}

The classic occupancy model is the simplest type of occupancy model, and has a few equivalent interpretations as sampling problems:
	\begin{itemize}
		\item \textit{Sampling with replacement} \cite{Johnson1977Urn} -- A single urn initially containing $m$ white balls is sampled one ball at a time, and each sampled ball is replaced by a red ball. The number of red balls in the urn after $n$ balls have been sampled is the occupancy number.
		
		\smallskip
		
        \item \textit{Sampling without replacement} \cite{Kalinka2014Probability} -- Each of $n$ urns contains a set of $m$ balls that are labeled 1 through $m$. A single ball is sampled from each urn, and the number of distinct labels on the sampled balls is equivalent to the occupancy number.
    \end{itemize}
The probability distribution of the classic occupancy number has been rediscovered multiple times and appears in literature under several names: the Stevens-Craig distribution \cite{Stevens1937Significance,Craig1953Utilization}, the Arfwedson distribution \cite{Arfwedson1951Probability}, the coupon collecting distribution \cite{David1962Combinatorial}, and the dixie cup distribution \cite{Johnson1977Urn}.

	\begin{figure}[ht]
    	\centering
		\begin{tikzpicture}[scale=0.5]

    \urn{0}{0}
	\ball[red]{-0.75}{0.25}
    \node at (-0.6,0.5) {2};
	\ball[red]{0.5}{0.25}
    \node at (0.7,0.5) {6};
	\ball[red]{-0.125}{-0.06}
    \node at (0.03,0.21) {5};
    \node at (0,2) {$A$};

    \urn{4}{0}
	\ball[red]{3.875}{0}
    \node at (4,0.25) {3};
    \node at (4,2) {$B$};

    \urn{8}{0}
    \node at (8,2) {$C$};

    \urn{12}{0}
	\ball[red]{11.375}{0}
    \node at (11.50,0.25) {4};
	\ball[red]{12.375}{0}
    \node at (12.55,0.25) {1};
    \node at (12,2) {$D$};

    \urn{16}{0}
    \node at (16,2) {$E$};

\end{tikzpicture}
		\caption{One configuration for casting $n=6$ balls into $m=5$ urns that results in $X=3$ occupied urns in the classic occupancy model.}
	\end{figure}
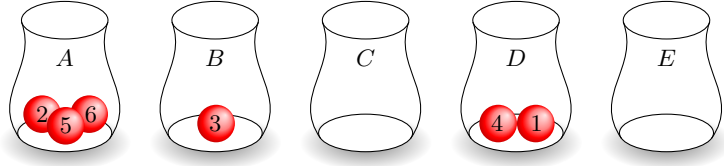

The number of ball placement configuration that result in an occupancy number of $i$ can be enumerated in the following way.
For $1 \le i \le \min\{m,n\}$, there are $\binom{m}{i}$ ways to choose $i$ of the urns to be occupied and $\stirling{n}{i} \, i!$ ways to partition the $n$ balls into the chosen urns where that $\stirling{n}{i}$ denotes a Stirling number of the second kind. 
Hence, the p.m.f. for the occupancy number of the classic occupancy distribution with $n$ balls and $m$ urns is
    \begin{equation} 
    \label{eqn:occupy-probability}
    \mathbb{P}\left[  X = i  \right] = \frac{\stirling{n}{i} \binom{m}{i} \, i!}{m^n} = \stirling{n}{i} \frac{m^{\underline{i}}}{m^n} \, ,
    \end{equation}
and the $r$\textsuperscript{th} moment is
    \begin{equation}
    \label{eq:classic-moments-1}
    \mathbb{E}\left[ X^r \right] = \sum\limits_{i=1}^{m} \stirling{n}{i} \frac{m^{\underline{i}}}{m^n} \, i^r \, .
    \end{equation}
While this expectation formula is sufficient for many applications, it is computationally intractable to obtain high precision calculations for large $m \gg 2^{32}$ on most modern computers. 
Alternatively, the raw moments can be computed from the binomial moments in~\eqref{eqn:binom-moment} using the Stirling inversion:
    \begin{align}
	\label{eq:moment-1}
    \mathbb{E}\left[X^r\right] 
    = \sum\limits_{i=1}^r \stirling{r}{i} \, \mathbb{E}[X^{\underline{i}}]
    = \sum\limits_{i=1}^r \stirling{r}{i} \, m^{\underline{i}} \,\, \mathbb{P} \! \left[ \prod\limits_{j=1}^i \chi_j = 1 \right],
    \end{align}
which is a sum up to $r$ and could be much less than $m$.


\subsubsection{Relation to Finite Differences}
\label{sec:finite-differences}

The moments of occupancy distributions can be expressed compactly via finite difference operators.
Let $\nabla$ be the \textit{backwards difference operator} defined by 
    \begin{equation*}
    \nabla : f(x) \mapsto f(x)-f(x-1)
    \end{equation*}
where the higher order difference is defined recursively by $\nabla^i = \nabla (\nabla^{i-1}) = \nabla^{i-1} ( \nabla )$ for $i>1$.
For example, 
    \begin{align}
    \nabla^i \binom{x}{k} = \nabla^{i-1} \left[ \binom{x}{k} - \binom{x-1}{k} \right] = \nabla^{i-1} \binom{x-1}{k-1} = \cdots = \binom{x-i}{k-i} \, .
    \end{align}
For an arithmetic function $f(x)$, this means $\nabla^i f(x) = \sum_{j=0}^i (-1)^j \binom{i}{j} f(x-j)$.
We adopt the conventional notation 
    $$\nabla [f(x)]_{x=a} = [ f(x) - f(x-1) ]_{x=a} = f(a) - f(a-1)$$ 
to indicate $\nabla f$ evaluated at $a$.
The \textit{forwards difference operator} is similarly defined by 
    $$\Delta : f(x) \mapsto f(x+1)-f(x) \, ,$$ 
and relates to $\nabla f$ through
    \begin{equation*}
    \Delta^i [f(x)]_{x=a} = \nabla^i [f(x)]_{x=a+i} \, .
    \end{equation*}


\begin{lemma}[\cite{Feller1968Probability,Mingshu2011Indentities,Charalambides2005Combinatorial}] 
    \label{lem:classic-moments-2}
    Let $\chi_i$ be i.i.d. urn occupancy random variables under the classic occupancy model with $n$ balls and $m$ urns.
    For $0 \le i \le m$,
    	\begin{align}
        \mathbb{P}\left[ \prod\limits_{j=1}^i \chi_j = 1 \right]
    	    = \sum\limits_{j=0}^i (-1)^j \binom{i}{j} \left( 1 - \frac{j}{m} \right)^n
        	= \frac{\nabla^i [x^n]_{x=m}}{m^n} \, .
    	\end{align}
    The $r$\textsuperscript{th} moment for the occupancy r.v. $X=\sum\chi_j$ is
    	\begin{align} 
    	\label{eqn:classic-moments-2}
        \mathbb{E}\left[X^r\right] = \sum\limits_{i=1}^r \stirling{r}{i} \, \frac{m^{\underline{i}}}{m^n} \, \nabla^i [x^n]_{x=m} \, ,
        \end{align}
    and the binomial and factorial moments satisfy
        \begin{align}
        \label{eqn:binom-fac-moments}
        \frac{\mathbb{E}\left[\binom{X}{r}\right]}{\binom{m}{r}} =
    	\frac{\mathbb{E}[X^{\underline{r}}]}{m^{\underline{r}}} = 
    	\frac{ \nabla^r [x^n]_{x=m}}{m^n} \, .
        \end{align}
\end{lemma}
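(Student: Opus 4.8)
The plan is to prove the first displayed identity by inclusion--exclusion on empty urns, recognize the resulting alternating sum as a backward finite difference, and then obtain both moment formulas by substitution into the general occupancy identities \eqref{eqn:binom-moment} and \eqref{eq:moment-1} already established above.

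First I would compute $\mathbb{P}\bigl[\prod_{j=1}^i \chi_j = 1\bigr]$, the probability that urns $1,\dots,i$ are all occupied. Letting $A_j$ denote the event that urn $j$ is \emph{empty}, the target event is $\overline{A_1}\cap\cdots\cap\overline{A_i}$, so inclusion--exclusion gives
\begin{align*}
\mathbb{P}\Bigl[\prod_{j=1}^i \chi_j = 1\Bigr]
= \sum_{j=0}^i (-1)^j \binom{i}{j}\,\mathbb{P}[\text{a fixed set of $j$ urns is empty}],
\end{align*}
where the binomial coefficient counts the $j$-subsets of $\{1,\dots,i\}$ and, by symmetry of the $m$ urns, each such subset contributes the same probability. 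Under the classic model each of the $n$ balls lands uniformly and independently among the $m$ urns, so the chance that all $n$ balls avoid a fixed set of $j$ urns is $\bigl((m-j)/m\bigr)^n = (1-j/m)^n$. Substituting yields the first equality. For the second equality I would invoke the finite-difference expansion $\nabla^i f(x)=\sum_{j=0}^i (-1)^j\binom{i}{j} f(x-j)$ recorded above with $f(x)=x^n$ evaluated at $x=m$, giving $\nabla^i[x^n]_{x=m}=\sum_{j=0}^i(-1)^j\binom{i}{j}(m-j)^n$; dividing by $m^n$ matches the alternating sum term by term.

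With the first identity in hand, the remaining two formulas are immediate substitutions. Plugging $\mathbb{P}\bigl[\prod_{j=1}^i\chi_j=1\bigr]=\nabla^i[x^n]_{x=m}/m^n$ into the Stirling-inversion moment formula \eqref{eq:moment-1} produces \eqref{eqn:classic-moments-2} directly. For the binomial moment, \eqref{eqn:binom-moment} at index $r$ already reads $\mathbb{E}[\binom{X}{r}]=\binom{m}{r}\,\mathbb{P}\bigl[\prod_{l=1}^r\chi_l=1\bigr]$, so dividing through and applying the first identity yields the left-hand ratio in \eqref{eqn:binom-fac-moments}; the middle ratio follows from $\binom{X}{r}=X^{\underline r}/r!$ and $\binom{m}{r}=m^{\underline r}/r!$, whose factors of $r!$ cancel after taking expectations. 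I expect no serious obstacle: the only point demanding care is the bookkeeping in the inclusion--exclusion step, namely aligning the complementary events $A_j$, the binomial count $\binom{i}{j}$ of $j$-subsets, and the evaluation point $x=m$ so that the signs reproduce exactly $\nabla^i[x^n]_{x=m}$ rather than a forward difference or a shifted argument.
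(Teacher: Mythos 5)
Your proposal is correct and follows essentially the same route as the paper: inclusion--exclusion over the events that individual urns are empty (with the vacancy probability $\left(\frac{m-j}{m}\right)^n$ for a fixed set of $j$ urns) gives the first identity, and the moment formulas then follow by substitution into \eqref{eqn:binom-moment} and \eqref{eq:moment-1}. The paper's proof is just a terser statement of exactly these steps, so there is nothing to add.
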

\begin{proof}
    The probability that the first $i$ urns remains vacant after $n$ balls have been tossed is $\left(\frac{m-i}{m}\right)^n$, and the hypothesis follows from the principle of inclusion-exclusion.
    
    The moments formulas follow from \eqref{eqn:binom-moment} and \eqref{eq:moment-1}.
\end{proof}

Lemma~\ref{lem:classic-moments-2} provides an efficient means for computing high order moments of the occupancy number $X$ when $m$ is large and the power $r$ is small, and comparing \eqref{eq:classic-moments-1} to \eqref{eqn:classic-moments-2} provides a novel proof for the following backwards difference identity reported by Riordan~\cite[p.211]{Riordan1968Combinatorial}.


\begin{corollary}
    \label{cor:stirling-identity}
    For $z \in \mathbb{C}$ and $n \ge 1$ and $r \ge 0$,
        \begin{equation}
        \label{eq:stirling-identity}
        \sum\limits_{i=1}^n \stirling{n}{i} \, \, i^r   z^{\underline{i}} = \sum\limits_{j=0}^r \stirling{r}{j} \,\, \left( \nabla^j  [x^n]_{x=z} \right) \, z^{\underline{j}} \, .
        \end{equation}
\end{corollary}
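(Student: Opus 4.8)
The plan is to recognize the identity as nothing more than the equality of two known expressions for the $r$th raw moment $\mathbb{E}[X^r]$ of the classic occupancy number, evaluated at an integer number of urns, and then to promote that equality from the positive integers to all of $\mathbb{C}$ by a polynomial-identity argument. This matches the paper's remark that "comparing \eqref{eq:classic-moments-1} to \eqref{eqn:classic-moments-2} provides a novel proof."

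First I would fix $z=m$ to be a positive integer and read off the two formulas already in hand. Equation \eqref{eq:classic-moments-1} gives $\mathbb{E}[X^r] = m^{-n}\sum_{i=1}^{m} \stirling{n}{i}\, m^{\underline{i}}\, i^r$, while Lemma~\ref{lem:classic-moments-2}, equation \eqref{eqn:classic-moments-2}, gives $\mathbb{E}[X^r] = m^{-n}\sum_{i=1}^{r} \stirling{r}{i}\, m^{\underline{i}}\, \nabla^i[x^n]_{x=m}$. Setting these equal and clearing the common factor $m^{-n}$ yields exactly \eqref{eq:stirling-identity} with $z$ replaced by $m$: on the left, the sum may be taken up to $n$ or up to $m$ interchangeably, since $\stirling{n}{i}$ vanishes for $i>n$ and $m^{\underline{i}}$ vanishes for $i>m$; on the right, the $j=0$ term contributes $\stirling{r}{0}\,z^n$, which is $0$ for $r\ge 1$ and correctly reproduces the $r=0$ case $\sum_i \stirling{n}{i}\, z^{\underline{i}} = z^n$.

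Next I would observe that both sides of \eqref{eq:stirling-identity} are polynomials in $z$ of degree at most $n$. On the left, each falling factorial $z^{\underline{i}}$ has degree $i\le n$. On the right, $\nabla^j[x^n]_{x=z}$ is the $j$th backwards difference of a degree-$n$ polynomial, hence a polynomial of degree $n-j$ in $z$, and multiplying by the degree-$j$ factor $z^{\underline{j}}$ produces a term of degree $n$. Since the two polynomials agree at the infinitely many points $z=1,2,3,\dots$, their difference is a polynomial of degree $\le n$ with more than $n$ roots, hence identically zero, which establishes \eqref{eq:stirling-identity} for every $z\in\mathbb{C}$.

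The only genuinely delicate point is bookkeeping rather than substance: one must confirm that the summation ranges and the $j=0$ term line up precisely when passing between the probabilistic formulas (whose indices start at $1$ and implicitly reflect $1\le i\le\min\{m,n\}$) and the clean statement of the corollary (whose right-hand sum starts at $0$). I expect the main obstacle to be nothing more than verifying these edge cases --- in particular the $r=0$ specialization and the harmless discrepancy in the upper summation limit on the left --- after which the degree count and the fact that a bounded-degree polynomial cannot have infinitely many roots finish the argument immediately.
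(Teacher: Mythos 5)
Your proposal is correct and follows essentially the same route as the paper: both identify the two sides with the moment formulas \eqref{eq:classic-moments-1} and \eqref{eqn:classic-moments-2} evaluated at integer $z=m$, then conclude by noting that two polynomials of degree at most $n$ agreeing at infinitely many (or at least $n+1$) integer points must coincide on all of $\mathbb{C}$. Your edge-case checks (the interchangeable upper summation limit and the $j=0$ term capturing the $r=0$ Stirling identity) match the paper's own remarks.
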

\begin{proof}
    Fix $n, r \ge 1$ and define the polynomials $f, g :\mathbb{C}\rightarrow\mathbb{C}$ as
        \begin{equation*}
        f(z) = \sum_{i=1}^n \stirling{n}{i} \, i^r z^{\underline{i}} \quad 
        \mbox{and} \quad 
        g(z)=\sum_{j=1}^r \stirling{r}{j} \, z^{\underline{j}} \,\, \nabla^j [x^n]_{x=z} \, .
        \end{equation*}
    For all integers $m \ge n$, the double counting given by \eqref{eq:classic-moments-1} and \eqref{eqn:classic-moments-2} yields
        \begin{equation*}
        f(m) = \sum\limits_{i=0}^n \stirling{n}{i} \, m^{\underline{i}} \,\, i^r 
             = \sum\limits_{i=0}^m \stirling{n}{i} \, m^{\underline{i}} \,\, i^r
             = m^n \, \mathbb{E}[X^r]
             = g(m) \, .
        \end{equation*}
    Since $f$ and $g$ are both polynomials of degree $n$ that agree on at least $n+1$ points, $f=g$ for all $z \in \mathbb{C}$.
    The limit on the RHS of \eqref{eq:stirling-identity} can be extended with $j=0$ to include the well-known Stirling identity $\sum_{i=1}^n \stirling{n}{i} z^{\underline{i}} = z^n$ for $r=0$.
\end{proof}

Prince \cite{Price1946Multinomial} first reported that the p.m.f. for the classic occupancy r.v. satisfies the recursive relation
	\begin{equation}
    \begin{aligned}
    \label{eqn:classic-recurrence}
    & \mathbb{P}\left[ X = i + 1 \mid m, n + 1 \right] = \\
    & \qquad \left( \frac{m - i}{m} \right) \mathbb{P}\left[X = i \mid m, n\right] + \left( \frac{i + 1}{m} \right) \mathbb{P}\left[ X = i + 1 \mid m, n \right] \, .
    \end{aligned}
    \end{equation}
The next theorem shows that the ordinary moments can be expressed recursively too.


\begin{theorem}
    \label{thm:recursive-moments}
    Let $X$ be a classic occupancy r.v. with $n$ balls and $m$ urns.
    For $r \ge 0$, the moments of $X$ satisfy the recurrence relation
        \begin{equation}
        \mathbb{E}\left[X^r \mid m,n \right] = \frac{1}{m} \mathbb{E}\left[X^{r+1} \mid m,n \right] + \left( 1 - \frac{1}{m} \right)^{\!n} \mathbb{E} \left[ X^r \mid m-1, n \right] \, .
        \end{equation}
\end{theorem}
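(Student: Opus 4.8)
The plan is to single out the last urn and decompose the occupancy number accordingly. Write $X = Y + \chi_m$, where $Y = \chi_1 + \cdots + \chi_{m-1}$ counts the occupied urns among the first $m-1$ and $\chi_m$ indicates whether urn $m$ is occupied. Since $\chi_m \in \{0,1\}$, one has the pointwise identity $X^r = Y^r\,\mathbf{1}\{\chi_m = 0\} + (Y+1)^r\,\mathbf{1}\{\chi_m = 1\}$, so taking expectations splits the $r$\textsuperscript{th} moment into two pieces, one for each value of $\chi_m$, and I would match each piece to a term on the right-hand side.

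For the $\chi_m = 0$ piece I would exploit the self-similarity of the model. Conditioned on urn $m$ being empty --- an event of probability $\left(1-\tfrac{1}{m}\right)^n = \left(\tfrac{m-1}{m}\right)^n$ --- each of the $n$ balls falls independently and uniformly into the remaining $m-1$ urns, so conditioned on $\{\chi_m = 0\}$ the variable $Y$ has exactly the law of a classic occupancy number with $n$ balls and $m-1$ urns. This yields
\[
\mathbb{E}\!\left[Y^r\,\mathbf{1}\{\chi_m = 0\}\right] = \left(1 - \frac{1}{m}\right)^{\!n} \mathbb{E}\!\left[X^r \mid m-1, n\right],
\]
which is the second term on the right-hand side.

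For the $\chi_m = 1$ piece I would invoke exchangeability. Because $\chi_m\,(Y+\chi_m)^r = \chi_m\,(Y+1)^r$ (both sides vanish when $\chi_m = 0$ and agree when $\chi_m = 1$), the first piece equals $\mathbb{E}[\chi_m X^r]$. Writing $X = \chi_1 + \cdots + \chi_m$ and using that $\mathbb{E}[\chi_i X^r]$ is independent of $i$ by exchangeability gives $\mathbb{E}[X^{r+1}] = \sum_{i=1}^m \mathbb{E}[\chi_i X^r] = m\,\mathbb{E}[\chi_m X^r]$. Hence $\mathbb{E}\!\left[(Y+1)^r\,\mathbf{1}\{\chi_m = 1\}\right] = \mathbb{E}[\chi_m X^r] = \tfrac{1}{m}\,\mathbb{E}[X^{r+1}]$, which is the first term. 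Adding the two pieces then gives the claimed recurrence.

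The routine parts are immediate; the one step deserving care is the conditional argument, namely verifying that conditioning on an empty urn $m$ really reduces the model to $m-1$ urns with the \emph{same} $n$ balls, so that no re-normalization of $n$ is needed. The exchangeability identity $\mathbb{E}[X^{r+1}] = m\,\mathbb{E}[\chi_m X^r]$ is the crux that raises the exponent from $r$ to $r+1$ and explains why the recurrence runs ``upward'' in the moment order. An alternative, purely algebraic route would substitute the finite-difference moment formula \eqref{eqn:classic-moments-2} into both sides and reduce matters to a $\nabla$-operator identity, but the probabilistic decomposition above is shorter and more transparent.
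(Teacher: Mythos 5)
Your proof is correct, but it takes a genuinely different route from the paper's. The paper argues purely algebraically from the explicit Stirling-number formula \eqref{eq:classic-moments-1}: it forms the difference $\mathbb{E}[X^r \mid m,n] - \frac{1}{m}\mathbb{E}[X^{r+1}\mid m,n] = \frac{1}{m^n}\sum_i \stirling{n}{i}\, m^{\underline{i}}\, i^r \left(1-\frac{i}{m}\right)$, uses the factorial identity $m^{\underline{i}}\left(1-\frac{i}{m}\right) = (m-1)^{\underline{i}}$ to kill the $i=m$ term and turn the sum into the corresponding sum for $m-1$ urns, and then pulls out the factor $\left(1-\frac{1}{m}\right)^n$ from the normalization $\frac{1}{m^n} = \left(\frac{m-1}{m}\right)^n \frac{1}{(m-1)^n}$. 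You instead decompose $X = Y + \chi_m$ and treat the two events $\{\chi_m=0\}$ and $\{\chi_m=1\}$ probabilistically: your conditioning step is valid because the ball placements are i.i.d.\ uniform on $[m]$, so conditioned on avoiding urn $m$ they are i.i.d.\ uniform on the remaining $m-1$ urns with the same $n$ balls; and your exchangeability identity $\mathbb{E}[X^{r+1}] = \sum_{i=1}^m \mathbb{E}[\chi_i X^r] = m\,\mathbb{E}[\chi_m X^r]$ is exactly what raises the moment order (note that $X$ is a symmetric function of the $\chi_i$, so each summand is indeed equal). What your route buys is an explanation of \emph{why} the recurrence holds: the paper's identity $m^{\underline{i}}\left(1-\frac{i}{m}\right) = (m-1)^{\underline{i}}$ is precisely the algebraic shadow of your conditioning on an empty last urn, and your argument never invokes the p.m.f.\ at all, only the i.i.d.\ structure and exchangeability — so it would survive in settings where no closed-form p.m.f.\ is available. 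The paper's computation, in exchange, is a compact three-line verification.
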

\begin{proof}
    Initially, $E[X^0 \mid m,n] = E[X^0 \mid m-1,n] = 1$ and $\mathbb{E}[X \mid m,n] = m \left(1 - \left( 1 - \frac{1}{m} \right)^n \right)$, and for $r>0$:
    \begin{align*}
    \mathbb{E}\left[ X^r \mid m,n \right] - \frac{1}{m} \mathbb{E} \left[ X^{r+1} \mid m,n \right] & = \frac{1}{m^n} \sum\limits_{i=1}^m \stirling{n}{i}\, m^{\underline{i}} \,\: i^r \left(1-\frac{i}{m}\right) \\
     & = \left( \frac{m-1}{m-1} \right)^n \frac{1}{m^n} \sum\limits_{i=1}^{m-1} \stirling{n}{i}\, (m-1)^{\underline{i}} \, i^r \\
     & = \left( 1 - \frac{1}{m} \right)^n \mathbb{E} \left[ X^{r} \mid m-1,n \right] \, . \qedhere
    \end{align*}
\end{proof}


\subsection{Committee Problems}
\label{subsec:committee-occupancy}

Consider the problem of distributing balls into urns in batches -- where the balls within each batch are cast into distinct urns (see Fig.~\ref{fig:committee-occupancy}).
If a total of $nk$ balls are placed into $m$ urns as $n$ batches of $k$ balls, then the occupancy number is the total number of urns containing at least one ball.
The classic occupancy problem is a special case of the committee problem with $k=1$.

As a committee problem, each batch of balls symbolizes the formation of a committee of size $k$ from a group of $m$ individuals, and urn occupancy represents the number of individuals serving on at least one committee.

	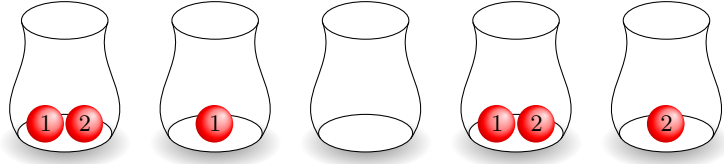
\begin{figure}[ht]
    	\centering
\begin{tikzpicture}[scale=0.5]

    \urn{0}{0}
	\ball[red]{-0.67}{0}
    \node at (-0.5,0.25) {1};
	\ball[red]{0.37}{0}
    \node at (0.54,0.25) {2};

    \urn{4}{0}
	\ball[red]{3.83}{0}
    \node at (4,0.25) {1};

    \urn{8}{0}

    \urn{12}{0}
	\ball[red]{11.33}{0}
    \node at (11.50,0.25) {1};
	\ball[red]{12.38}{0}
    \node at (12.55,0.25) {2};

    \urn{16}{0}
	\ball[red]{15.83}{0}
    \node at (16,0.25) {2};

\end{tikzpicture}
		\caption{One configuration for casting $n=2$ batches of $k=3$ balls into $m=5$ urns that results in $X=4$ occupied urns in the committee occupancy model.}
		\label{fig:committee-occupancy}
	\end{figure}
    
Some equivalent models for the committee problem include:
	\begin{itemize}
		\item \textit{Sampling with replacement} -- An urn initially containing $m$ white balls is sampled in batches of $k$ balls, and each sample of balls is replaced by $k$ red balls.
        The number of red balls in the urn after $n$ batches have been sampled is the occupancy number.
        
        \smallskip
        
		\item \textit{Sampling without replacement} --  A set of $n$ urns each contain a copy of balls labeled $1,2,\dots,m$. 
        If a sample of $k$ balls are drawn form each of the urns, then the occupancy number is the number balls with distinct labels. 
        
        \smallskip

        \item \textit{Capture-recapture} \cite{Berg1974Factorial} -- A sample of $k$ animals are captured, tagged, and released back into the population of size $m$. 
        Later, another $k$ animals are captured/recaptured and tagged. 
        After $n$ sample trials, the total number of tagged animals is the occupancy number.
        
        \smallskip

		\item \textit{Set union cardinality} \cite{Grandi2015Gamma} -- Let $A$ be a set with $|A|=m$ and $S_1$, $S_2$, $\dots$, $S_n \subseteq A$ be random subsets for which $|S_i|=k$ for all $1 \le i \le n$. 
        The cardinality of the union $\left| \cup_{i\in [n]} S_i \right|$ is the occupancy number.
    \end{itemize}

The p.m.f. for the committee distribution is
	\begin{align}
    \label{eq:batch-probability}
    \mathbb{P}[X = i] & = \binom{m}{i} \sum\limits_{j=0}^{i} (-1)^{i-j} \binom{i}{j} \left[ \frac{\binom{j}{k}}{\binom{m}{k}} \right]^n 
    	= \binom{m}{i} \, \frac{\Delta^i \binom{x}{k}^n_{x=0}}{\binom{m}{k}^n}
    \end{align}
where $m$ is the number of urns, $n$ is the number of batches, and $k$ is the number of balls in each batch \cite{Berg1974Factorial,Johnson1977Urn,Fang1982Finite,Grandi2015Gamma,Mantel1968Class,Roberts1979Partial}.
From \eqref{eq:batch-probability}, the binomial and factorial moments of the committee distribution are given by
    \begin{align}
    \label{eqn:batch-factorial-moments}
    \frac{\mathbb{E}\left[\binom{X}{r}\right]}{\binom{m}{r}}
    = \frac{\mathbb{E}\left[X^{\underline{r}}\right]}{m^{\underline{r}}} 
    = \sum\limits_{i=0}^{r} (-1)^i \binom{r}{i} \left[\frac{\binom{m-i}{k}}{\binom{m}{k}}\right]^n
    = \frac{\nabla^r \binom{x}{k}^n_{x=m}}{\binom{m}{k}^n}
    \end{align}
\cite{Berg1974Factorial,Fang1982Finite,Grandi2015Gamma,Kemp1974Factorial}. 
In particular, the first moment is
    \begin{align}
    \label{eqn:batch-occupancy-mean}
    \mathbb{E}\left[X\right] = m \left( 1 - \left( 1 - \frac{k}{m} \right)^n \right) \, .
    \end{align}

As a suggestion from his referee, Sprott~\cite{Sprott1969Class} generalized the problem from committees of fixed size $k$ to committees with varying sizes $k_1,\dots,k_n$, and obtained
    \begin{align}
    \label{eqn:variable-committee-sizes}
    \mathbb{P}[X=i]=\binom{m}{i} \sum\limits_{j=0}^i (-1)^{i-j} \binom{i}{j} \prod\limits_{d=1}^n \frac{ \binom{j}{k_d} }{ \binom{m}{k_d} } \, ,
    \end{align}
which coincides with the committee union problem in the following section.


\subsection{Multivariate Committee Problems}
\label{subsec:multivariate-committee}

Now suppose that $c$ departments form committees of varying sizes. 
Specifically, let each department $d \in [c]$ select $n_d$ committees of size $k_d$ from a shared pool of $m$ faculty members.
Two natural classes of occupancy problems can be constructed from the unions and intersections of these committees:
    \begin{itemize}
        \item \textit{Committee Union Problem}\footnote{The committee union problem should not be confused with the collective bargaining pursuits of the faculty, which lie beyond the scope of this paper.}: Let $X_{\cup}$ be the r.v. for the occupancy number counting the faculty members which serve on at least one committee from \textit{any} department. \smallskip
        \item \textit{Committee Intersection Problem}: Let $X_{\cap}$ be the r.v. that counts the number of members serving on at least one committee from \textit{every} department.
    \end{itemize}
As an urn model, each department represents a ball coloring from one of $c$ colors.

	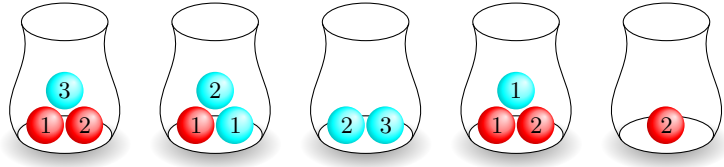
\begin{figure}[ht]
    	\centering
\begin{tikzpicture}[scale=0.5]

    \urn{0}{0}
	\ball[red]{-0.67}{0}
    \node at (-0.5,0.25) {1};
	\ball[red]{0.37}{0}
    \node at (0.54,0.25) {2};
    \ball[cyan]{-0.15}{0.92}
    \node at (0,1.17) {3};

    \urn{4}{0}
	\ball[red]{3.33}{0}
    \node at (3.5,0.25) {1};
	\ball[cyan]{4.37}{0}
    \node at (4.54,0.25) {1};
    \ball[cyan]{3.85}{0.92}
    \node at (4,1.17) {2};

    \urn{8}{0}
	\ball[cyan]{7.33}{0}
    \node at (7.5,0.25) {2};
	\ball[cyan]{8.37}{0}
    \node at (8.54,0.25) {3};

    \urn{12}{0}
	\ball[red]{11.33}{0}
    \node at (11.5,0.25) {1};
	\ball[red]{12.37}{0}
    \node at (12.54,0.25) {2};
    \ball[cyan]{11.85}{0.92}
    \node at (12,1.17) {1};

    \urn{16}{0}
	\ball[red]{15.83}{0}
    \node at (16,0.25) {2};

\end{tikzpicture}
		\caption{One configuration for casting $n_1=2$ batches of $k_1=3$ red balls and $n_2 = 3$ batches of $k_2 = 3$ cyan balls into $m=5$ urns, resulting in the occupancy number $X_{\cup}=5$ for the committee union problem and $X_{\cap}=3$ for the committee intersection problem.}
	\end{figure}


\subsubsection{Committee Union Problem}
\label{sec:committee-union-problem}

Charalambides~\cite[p.197]{Charalambides2005Combinatorial} considers multiplicities of the variable sized committee problem and gives
    \begin{align}
    \notag \mathbb{P}[X_{\cup} = i \mid n_1, k_1, \dots, n_c, k_c] & = \binom{m}{i} \sum\limits_{j=0}^i (-1)^{i-j} \binom{i}{j} \prod\limits_{d=1}^c \left[ \frac{\binom{j}{k_d}}{\binom{m}{k_d}} \right]^{n_d} \\
    \label{eqn:committee-union-pmf}
        & = \binom{m}{i} \, \Delta^i \left\{ \prod_{d=1}^c \left[ \frac{\binom{x}{k_d}}{\binom{m}{k_d}} \right]^{n_d} \right\}_{x=0}
    \end{align}
with binomial moments
    \begin{equation}
    \label{eqn:committee-union-binom}
    \mathbb{E}\left[\binom{X_{\cup}}{r}\right] = \binom{m}{r} \, \nabla^r \! \left\{ \; \prod\limits_{d=1}^c \left[ \frac{\binom{x}{k_d}}{\binom{m}{k_d}} \right]^{n_d} \right\}_{x=m} \, .
    \end{equation}
Since ball color does not affect occupancy in the committee union problem, the committee sizes $k_1,\dots,k_d$ with multiplicities $n_1,\dots,n_d$ can be re-indexed to $k_1^{\prime},\dots,k_n^{\prime}$ so that
    \begin{align}
    \prod_{d=1}^c \left[ \frac{\binom{x}{k_d}}{\binom{m}{k_d}} \right]^{n_d} = \prod_{l=1}^n \frac{\binom{x}{k_l^{\prime}}}{\binom{m}{k_l^{\prime}}}
    \end{align}
where $n=n_1+\dots+n_c$. 
Said another way: after a suitable re-indexing, the multivariate committee union problem reduces to Sprott's~\cite{Sprott1969Class} generalization with variable committee sizes $k_1,k_2,\dots,k_n$ within a single department \eqref{eqn:variable-committee-sizes}.

As the following theorem shows, the binomial moments and p.m.f. of the committee union distribution can be presented in terms of either finite differences or generalized hypergeometric functions.


\begin{theorem}
    \label{thm:psi-formulas}
    Fix $\vec{k}=(k_1,k_2,\dots,k_n) \in \mathbb{N}^n$ and define
        \begin{align}
        \rho_{\vec{k}}(r,s) := \nabla^r \left[ \prod\limits_{d=1}^n \frac{\binom{x}{k_d}}{\binom{s}{k_d}} \right]_{x=s} .
        \end{align}
    Then $\rho$ satisfies the recursive relation
        \begin{align}
        \rho_{\vec{k}}(r,s) = \rho_{\vec{k}}(r-1,s) - \left(\prod\limits_{d=1}^n 1 - \frac{k_d}{s} \right) \rho_{\vec{k}}(r-1,s-1) \, .
        \end{align}
    with initial condition $\rho_{\vec{k}}(0,s)=1$ for all $s \in \mathbb{N}$, and
        \begin{align}
        \rho_{\vec{k}}(r,s) & = \pFq{n+1}{n}{-r,k_1-s,\dots,k_n-s}{-s, \, \dots \, , \, -s}{1} \, , 
        \end{align}
    for $0 \le \max_{d \in [n]} \, \{k_d\} \le r \le s$ where $_{n+1}F_{n}$ is a generalized hypergeometric function.
\end{theorem}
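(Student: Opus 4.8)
The plan is to handle the three assertions in turn, leaning throughout on the expansion $\nabla^r f(x) = \sum_{j=0}^r (-1)^j \binom{r}{j} f(x-j)$ recorded just before the statement. The initial condition is immediate: setting $r=0$ makes $\nabla^0$ the identity, so $\rho_{\vec{k}}(0,s) = \prod_{d=1}^n \binom{s}{k_d}\big/\binom{s}{k_d} = 1$ for every $s$. For the recurrence I would first unfold $\rho$ into the finite alternating sum
\begin{align*}
\rho_{\vec{k}}(r,s) = \sum_{j=0}^r (-1)^j \binom{r}{j} \prod_{d=1}^n \frac{\binom{s-j}{k_d}}{\binom{s}{k_d}} ,
\end{align*}
which is exactly the $\nabla^r$ expansion of the product evaluated at $x=s$.

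Next I would split the binomial coefficient by Pascal's rule $\binom{r}{j} = \binom{r-1}{j} + \binom{r-1}{j-1}$. The $\binom{r-1}{j}$ piece reproduces $\rho_{\vec{k}}(r-1,s)$ directly, the spurious $j=r$ term dropping out since $\binom{r-1}{r}=0$. In the $\binom{r-1}{j-1}$ piece I would re-index by $i=j-1$ and factor out $\prod_{d=1}^n \binom{s-1}{k_d}\big/\binom{s}{k_d}$; the elementary identity $\binom{s-1}{k_d}\big/\binom{s}{k_d} = (s-k_d)/s = 1 - k_d/s$ converts this prefactor into $\prod_{d=1}^n(1 - k_d/s)$, while the remaining sum is precisely $\rho_{\vec{k}}(r-1,s-1)$ with its denominators rebased to $s-1$. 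Collecting the two pieces yields the claimed recurrence. (Equivalently one can argue via $\nabla^r = \nabla^{r-1}\nabla$ and track how the shift $x\mapsto x-1$ turns the evaluation point $s$ into $s-1$; the Pascal-rule route keeps everything as explicit finite sums and avoids worrying about the parameter $s$ hidden in the denominators.)

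For the hypergeometric identity I would expand the right-hand side from the defining power series of ${}_{n+1}F_n$ evaluated at $z=1$:
\begin{align*}
\pFq{n+1}{n}{-r, k_1-s, \dots, k_n-s}{-s, \dots, -s}{1} = \sum_{j \ge 0} \frac{(-r)_j \prod_{d=1}^n (k_d-s)_j}{\left((-s)_j\right)^n\, j!} .
\end{align*}
The factor $(-r)_j$ forces termination at $j=r$, and $(-r)_j/j! = (-1)^j\binom{r}{j}$. The surviving Pochhammer ratio rewrites, after converting each rising factorial to a falling factorial, as $(k_d-s)_j/(-s)_j = (s-k_d)^{\underline{j}}/s^{\underline{j}} = \binom{s-j}{k_d}\big/\binom{s}{k_d}$. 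Substituting these two reductions makes the series coincide term-by-term with the alternating sum of the first paragraph, which proves $\rho_{\vec{k}}(r,s)$ equals the stated ${}_{n+1}F_n$.

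The only thing demanding care --- and the closest thing to a genuine \emph{obstacle} --- is the bookkeeping around the hypotheses $0 \le \max_{d}\{k_d\} \le r \le s$, which are exactly what is needed to make the identity literal rather than merely formal. The bound $\max_d k_d \le s$ guarantees the normalizers $\binom{s}{k_d}$ are nonzero, and $r \le s$ guarantees $(-s)_j \neq 0$ for all $0 \le j \le r$ (each factor $-s+i$ with $i \le r-1 < s$ is strictly negative), so no denominator in the hypergeometric series vanishes before the series terminates. Once these are verified the argument is pure symbol manipulation with no analytic content; the one substantive observation is the recognition that the binomial ratio $\binom{s-j}{k_d}\big/\binom{s}{k_d}$ is the image of the Pochhammer ratio $(k_d-s)_j/(-s)_j$ under the rising-to-falling factorial correspondence.
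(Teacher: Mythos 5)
Your proposal is correct and follows essentially the same route as the paper's proof: the hypergeometric identity is established by the identical series expansion (termination via $(-r)_j$, the conversion $(-r)_j/j! = (-1)^j\binom{r}{j}$, and the Pochhammer-to-binomial-ratio rewriting), and the initial condition is handled the same way. Your Pascal's-rule derivation of the recurrence is just the expanded-sum form of the paper's one-step operator argument $\nabla^r = \nabla^{r-1}\nabla$ (with the denominator rebasing producing the factor $\prod_{d=1}^n (s-k_d)/s$), which you yourself flag as the equivalent alternative.
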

\begin{proof}
    The initial condition $\rho_{\vec{k}}(0,s)=1$ for all $s\in\mathbb{N}$ is trivial, and the recursive relation can be verified directly:
        \begin{align*}
        \rho_{\vec{k}}(r,s) 
        & = \nabla^r \left[ \prod\limits_{d=1}^{n} \frac{\binom{x}{k_d}}{\binom{s}{k_d}} \right]_{x=s} \\
        & = \nabla^{r-1} \left[ \prod\limits_{d=1}^n \frac{\binom{x}{k_d}}{\binom{s}{k_d}} \right]_{x=s} - \left(\prod\limits_{d=1}^n \frac{\binom{s-1}{k_d}}{\binom{s}{k_d}} \right) \nabla^{r-1} \left[ \prod\limits_{d=1}^n \frac{\binom{x}{k_d}}{\binom{s-1}{k_d}} \right]_{x=s-1} \\
        & = \rho_{\vec{k}}(r-1,s) - \left(\prod\limits_{d=1}^n \frac{s-k_d}{s} \right) \rho_{\vec{k}}(r-1,s-1) \, . 
        \end{align*}
    Expansion of $_{n+1}F_{n}$ gives
        \begin{align*}
        \pFq{n+1}{n}{-r,k_1-m,\dots,k_n-m}{-m, \, \dots \, , \, -m}{1} & = \sum\limits_{j=0}^{\infty} \frac{(-r)^{(j)}}{j!} \prod\limits_{d=1}^n \frac{(k_d - m)^{(j)}}{(-m)^{(j)}} \\
        & = \sum\limits_{j=0}^r (-1)^j \binom{r}{j} \prod\limits_{d=1}^n \frac{ \binom{m-j}{k_d} }{ \binom{m}{k_d} } \\
        & = \nabla^r \left[ \prod\limits_{d=1}^n \frac{\binom{x}{k_d}}{\binom{m}{k_d}} \right]_{x=m}
        \end{align*}
    and completes the proof.
\end{proof}


\begin{corollary}
    For $\max\limits_{j\in [n]} \{k_j \} \le i \le m$,
        \begin{align}
        \label{eqn:committee-union-gh-pmf}
        \mathbb{P}[X_{\cup} = i] = \binom{m}{i} \left( \prod\limits_{d=1}^{n} \frac{\binom{i}{k_d}}{\binom{m}{k_d}} \right) \pFq{n}{n-1}{k_1-i,\dots,k_n-i}{-i,\dots,-i}{1} \, ,
        \end{align}
    and \, $\mathbb{P}[X_{\cup}=i]=0$ \, otherwise, and
        \begin{align}
        \mathbb{E}\left[ \binom{ X_{\cup} }{r} \right] = \binom{m}{r} \, \pFq{n+1}{n}{-r, k_1 \! - \! m,\!\dots,\!k_n \! - \! m}{-m, \, \dots \, , \, -m}{1}
        \end{align}
    for $0 \le r \le m$.
\end{corollary}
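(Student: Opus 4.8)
The plan is to read off both identities from the finite-difference representations of the committee union distribution already recorded in \eqref{eqn:committee-union-pmf} and \eqref{eqn:committee-union-binom}, and then to invoke the hypergeometric evaluation of $\rho_{\vec k}$ from Theorem~\ref{thm:psi-formulas}. Since ball colour is irrelevant to occupancy, I first collapse the multiplicities $n_1,\dots,n_c$ into a single list of variable committee sizes $k_1,\dots,k_n$ in one department, exactly as in the discussion preceding the corollary. The binomial-moment formula is then almost immediate: \eqref{eqn:committee-union-binom} becomes
\begin{align*}
\mathbb{E}\!\left[\binom{X_{\cup}}{r}\right] = \binom{m}{r}\,\nabla^r\!\left[\prod_{d=1}^n \frac{\binom{x}{k_d}}{\binom{m}{k_d}}\right]_{x=m} = \binom{m}{r}\,\rho_{\vec k}(r,m),
\end{align*}
and Theorem~\ref{thm:psi-formulas} with $s=m$ identifies $\rho_{\vec k}(r,m)$ with the stated $\pFq{n+1}{n}{-r,k_1-m,\dots,k_n-m}{-m,\dots,-m}{1}$. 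I would note that, because the factor $(-r)^{(j)}$ truncates the series at $j=r$ while each lower factor $(-m)^{(j)}$ stays nonzero for $j\le r\le m$, this identification is valid on the full range $0\le r\le m$ claimed here, not merely for $r\ge\max_d k_d$.

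For the p.m.f. one extra step is needed. Re-indexed, \eqref{eqn:committee-union-pmf} expresses $\mathbb{P}[X_{\cup}=i]$ as $\binom{m}{i}$ times a forward difference at $x=0$. Using the conversion $\Delta^i[f(x)]_{x=0}=\nabla^i[f(x)]_{x=i}$ from Section~\ref{sec:finite-differences}, I rewrite this as a backward difference evaluated at $x=i$. The constants $\binom{m}{k_d}$ pull out of $\nabla^i$, and inserting and removing a factor $\binom{i}{k_d}$ matches the normalisation in the definition of $\rho$, giving
\begin{align*}
\nabla^i\!\left[\prod_{d=1}^n \frac{\binom{x}{k_d}}{\binom{m}{k_d}}\right]_{x=i} = \left(\prod_{d=1}^n \frac{\binom{i}{k_d}}{\binom{m}{k_d}}\right)\rho_{\vec k}(i,i).
\end{align*}
Theorem~\ref{thm:psi-formulas} with $r=s=i$ (admissible exactly when $\max_d k_d\le i$) then gives $\rho_{\vec k}(i,i)=\pFq{n+1}{n}{-i,k_1-i,\dots,k_n-i}{-i,\dots,-i}{1}$.

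The crux -- the one place that needs an observation rather than bookkeeping -- is to recognise that this $_{n+1}F_n$ collapses to the advertised $_{n}F_{n-1}$: the extra upper parameter $-i$ coincides with one of the lower parameters $-i$, so the matching Pochhammer factors $(-i)^{(j)}$ cancel termwise and
\begin{align*}
\pFq{n+1}{n}{-i,k_1-i,\dots,k_n-i}{-i,\dots,-i}{1} = \pFq{n}{n-1}{k_1-i,\dots,k_n-i}{-i,\dots,-i}{1}.
\end{align*}
I would justify the cancellation by noting that the upper factors $(k_d-i)^{(j)}$ already truncate the sum at $j=i-\max_d k_d<i+1$, so the cancelled lower factor $(-i)^{(j)}$ is nonzero throughout the range of summation and the termwise division is legitimate. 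Assembling the three displays gives the p.m.f. on its support $\max_d k_d\le i\le m$. Finally, for $i<\max_d k_d$ (and trivially for $i>m$) I would dispatch $\mathbb{P}[X_{\cup}=i]=0$ by the combinatorial remark that the union of all committees occupies at least $\max_d k_d$ distinct urns, so $X_{\cup}\ge\max_d k_d$ with certainty.
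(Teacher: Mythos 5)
Your proof is correct and follows essentially the same route as the paper's: rewrite the forward difference in \eqref{eqn:committee-union-pmf} as a backward difference at $x=i$, factor out the normalizations to recognize $\rho_{\vec{k}}(i,i)$ and $\rho_{\vec{k}}(r,m)$ in the p.m.f.\ and in \eqref{eqn:committee-union-binom}, and invoke Theorem~\ref{thm:psi-formulas}. You additionally supply details the paper leaves implicit --- the termwise cancellation of the matched parameter $-i$ reducing ${}_{n+1}F_{n}$ to ${}_{n}F_{n-1}$, the truncation argument extending the moment identity to the full range $0 \le r \le m$, and the combinatorial justification that $X_{\cup} \ge \max_d k_d$ almost surely --- all of which are handled correctly.
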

\begin{proof}
    Note that
        \begin{align*}
        \Delta^i \left[ \frac{\binom{x}{k_d}}{\binom{m}{k_d}} \right]_{x=0} 
        = \nabla^i \left[ \frac{\binom{x}{k_d}}{\binom{m}{k_d}} \right]_{x=i}
        = \left( \prod\limits_{d=1}^n \frac{\binom{i}{k_d}}{\binom{m}{k_d}} \right) \nabla^{i} \left[  \prod\limits_{d=1}^n \frac{\binom{x}{k_d}}{\binom{i}{k_d}} \right]_{x=i} \, .
        \end{align*}
    Hence, \eqref{eqn:committee-union-pmf} and~\eqref{eqn:committee-union-binom} can be written as
        \begin{align*}
        \mathbb{P}\left[X_{\cup} = i \right] = \binom{m}{i} \left( \prod\limits_{d=1}^n \frac{\binom{i}{k_d}}{\binom{m}{k_d}} \right) \rho(i,i) \quad \mbox{ and } \quad
        \mathbb{E}\left[\binom{X_{\cup}}{r} \right] = \binom{m}{r} \rho(r,m) ,
        \end{align*}
    respectively.
\end{proof}


\begin{corollary}
    \label{cor:committee-union-pgf}
    The p.g.f. for $X_{\cup}$ is
        \begin{align}
        G(z) = z^m \pFq{n}{n-1}{-(m-k_1),\dots,-(m-k_n)}{-m,\dots,-m}{1-\frac{1}{z}} \, .
        \end{align}
\end{corollary}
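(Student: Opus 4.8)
The plan is to read off the p.g.f.\ directly from the binomial moments supplied by the previous corollary. Since $X_{\cup}$ is a nonnegative integer supported on $\{0,1,\dots,m\}$, the binomial theorem gives $z^{X_{\cup}}=\sum_{r\ge 0}\binom{X_{\cup}}{r}(z-1)^r$, so taking expectations and using $\mathbb{E}\!\left[\binom{X_{\cup}}{r}\right]=\binom{m}{r}\,\rho_{\vec{k}}(r,m)$ from that corollary yields
\begin{equation*}
G(z)=\mathbb{E}\!\left[z^{X_{\cup}}\right]=\sum_{r=0}^{m}\binom{m}{r}\,\rho_{\vec{k}}(r,m)\,(z-1)^r .
\end{equation*}
I would then expand $\rho_{\vec{k}}(r,m)=\sum_{j=0}^{r}(-1)^j\binom{r}{j}\prod_{d=1}^{n}\binom{m-j}{k_d}/\binom{m}{k_d}$ using the finite-difference form established in Theorem~\ref{thm:psi-formulas}.

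The core of the argument is a summation interchange. Swapping the order of the $r$- and $j$-sums, applying the subset-of-a-subset identity $\binom{m}{r}\binom{r}{j}=\binom{m}{j}\binom{m-j}{r-j}$, and collapsing the inner sum over $r$ by the binomial theorem $\sum_{s=0}^{m-j}\binom{m-j}{s}(z-1)^s=z^{m-j}$ should telescope the double sum into the single sum
\begin{equation*}
G(z)=z^{m}\sum_{j=0}^{m}(-1)^j\binom{m}{j}\left(1-\tfrac{1}{z}\right)^{\!j}\prod_{d=1}^{n}\frac{\binom{m-j}{k_d}}{\binom{m}{k_d}} .
\end{equation*}
This is the step I expect to be the main obstacle: getting the bookkeeping of the interchange right so that the factor $z^{m}(1-1/z)^j$ emerges cleanly, with termination at $j=m$ guaranteed by $\binom{m}{j}=0$ for $j>m$.

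It then remains to recognize the $j$-sum as a terminating ${}_{n}F_{n-1}$, which is routine Pochhammer arithmetic. Writing the rising factorials in the paper's notation, $(-1)^j\binom{m}{j}=(-m)^{(j)}/j!$ and the ratio of binomials converts to $\binom{m-j}{k_d}/\binom{m}{k_d}=(k_d-m)^{(j)}/(-m)^{(j)}$; multiplying these across $d=1,\dots,n$ cancels all but $n-1$ copies of $(-m)^{(j)}$ in the denominator, so the general term becomes
\begin{equation*}
\frac{1}{j!}\,\frac{\prod_{d=1}^{n}(k_d-m)^{(j)}}{\big[(-m)^{(j)}\big]^{\,n-1}}\left(1-\tfrac{1}{z}\right)^{\!j},
\end{equation*}
which is exactly the $j$-th summand of the series in the statement, with argument $1-1/z$ and numerator parameters $-(m-k_d)=k_d-m$ over denominator parameters $-m$. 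Factoring out $z^m$ completes the identification and hence the proof.
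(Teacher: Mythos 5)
Your proof is correct, but it follows a genuinely different route from the paper's. You work entirely on the union side: starting from $G(z)=\sum_{r}\mathbb{E}\!\left[\binom{X_{\cup}}{r}\right](z-1)^r$ and the finite-difference form of $\rho_{\vec{k}}(r,m)$ from Theorem~\ref{thm:psi-formulas}, you interchange the double sum via $\binom{m}{r}\binom{r}{j}=\binom{m}{j}\binom{m-j}{r-j}$, collapse the inner sum by the binomial theorem to produce $z^{m-j}(z-1)^j$, and then identify the remaining $j$-sum as the stated ${}_{n}F_{n-1}$; I checked the interchange, the factor $z^m(1-1/z)^j$, and the Pochhammer conversions $(-1)^j\binom{m}{j}=(-m)^{(j)}/j!$ and $\binom{m-j}{k_d}/\binom{m}{k_d}=(k_d-m)^{(j)}/(-m)^{(j)}$, and all are sound (the series in fact terminates even earlier than $j=m$, at $j=m-\max_d k_d$, since $\binom{m-j}{k_d}=0$ beyond that, which is consistent with the hypergeometric series being well defined despite the negative-integer denominator parameters). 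The paper instead detours through the complement: by Lemma~\ref{lem:intersect-union-complement}, the unoccupied count $U_{\cup}=m-X_{\cup}$ follows a single committee intersection law with sizes $m-k_d$, whose binomial moments factor cleanly as $\binom{m}{r}^{1-c}\prod_d\binom{k_d}{r}$ (Theorem~\ref{thm:committee-intersection}), so its binomial-moment o.g.f. is immediately a hypergeometric series with no interchange needed (Theorem~\ref{thm:ghfd}, equation \eqref{eqn:unoccupied-pgf}); the corollary then follows from the one-line reversal $G_{X_{\cup}}(z)=z^m\,G_{U_{\cup}}(1/z)$, which is also where the argument $1-\tfrac1z$ comes from conceptually. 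Your approach buys self-containedness — it never invokes the intersection distribution or the GHFMD machinery — at the cost of the double-sum bookkeeping; the paper's approach buys an essentially computation-free proof and a structural explanation (duality with a GHFMD), at the cost of routing through two auxiliary results.
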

\begin{proof}
    Follows from Theorem~\ref{thm:ghfd} in Section~\ref{subsubsec:single-committee-intersection}.
\end{proof}

The p.g.f. of a \textit{factorial series distribution} (FSD) is generated from the terms of a factorial series, i.e., the coefficients of the generating function are constructed from a real function $A : [0,m] \to \mathbb{R}^{+}$
    \begin{align}
    \label{eqn:fsd-series}
    A(m) = \sum\limits_{i=0}^m m^{\underline{i}} \, a_i \quad \Longleftrightarrow \quad a_m = \frac{1}{m!} \Delta^m \left[ A(i) \right]_{i=0}
    \end{align}
with $a_i \ge 0$ for $i \in [0, m]$ which do not depend on $m$.
The p.g.f. and p.m.f. of a FSD have forms
    \begin{align}
    G(z) = \sum_{i=0}^m m^{\underline{i}} \, a_i \, z^i
    \qquad \text{and} \qquad 
    \mathbb{P}\left[ X = i \right] = \binom{m}{i} \frac{\Delta^i [A(x)]_{x=0}}{A(m)},
    \end{align}
respectively.


\begin{theorem}[\cite{Berg1974Factorial}]
    \label{}
    $X_{\cup}$ is a FSD with $A(x)=\prod_{d=1}^n \binom{x}{k_d}$.
\end{theorem}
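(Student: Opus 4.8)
The plan is to read the claim directly off the committee union p.m.f.\ and the FSD template, with the only substantive content being a nonnegativity check. Recall from the re-indexing remark preceding this theorem that the multivariate union p.m.f.\ \eqref{eqn:committee-union-pmf} may be rewritten with a single list of committee sizes $k_1,\dots,k_n$, so that $\mathbb{P}[X_{\cup}=i] = \binom{m}{i}\,\Delta^i\{\prod_{d=1}^n \binom{x}{k_d}/\binom{m}{k_d}\}_{x=0}$. The first step is to notice that the product $\prod_{d=1}^n \binom{m}{k_d}$ in the denominator does not depend on the difference variable $x$, so it passes through $\Delta^i$ untouched. Setting $A(x) := \prod_{d=1}^n \binom{x}{k_d}$, this gives $A(m) = \prod_{d=1}^n \binom{m}{k_d}$ and $\mathbb{P}[X_{\cup}=i] = \binom{m}{i}\,\Delta^i[A(x)]_{x=0}/A(m)$, which is exactly the p.m.f.\ form prescribed for a FSD.

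It remains to confirm that $A$ is a legitimate FSD generator, i.e.\ that the factorial coefficients $a_i := \frac{1}{i!}\Delta^i[A(x)]_{x=0}$ are nonnegative and independent of $m$. Independence from $m$ is immediate, since $A(x)$ involves only $x$ and the fixed sizes $k_d$. For nonnegativity, I would invoke Newton's forward-difference expansion $A(x) = \sum_{i\ge 0} \frac{1}{i!}\Delta^i[A(x)]_{x=0}\, x^{\underline{i}}$, which identifies $i!\,a_i = \Delta^i[A(x)]_{x=0} = \sum_{j=0}^i (-1)^{i-j}\binom{i}{j}\,A(j)$ as the quantity that must be shown to be nonnegative.

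The main, and essentially the only, obstacle is this nonnegativity, and the cleanest route is combinatorial rather than algebraic. I would read $A(j) = \prod_{d=1}^n \binom{j}{k_d}$ as the number of ways to choose committees $C_1,\dots,C_n$ with $|C_d| = k_d$ from a fixed $j$-element pool; relative to a fixed $i$-element pool this counts exactly the selections whose union is contained in a prescribed $j$-subset. Summing over those subsets with the sign $(-1)^{i-j}$ is then the inclusion--exclusion that sieves out configurations missing at least one element, so $\Delta^i[A(x)]_{x=0}$ equals the number of selections $C_1,\dots,C_n$ from an $i$-element pool whose union is the entire pool. Being a cardinality, this is a nonnegative integer, whence $a_i \ge 0$ and $A$ generates a FSD of the claimed form. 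A minor bookkeeping point worth recording is the support: $a_i$ vanishes unless $\max_d k_d \le i \le \sum_d k_d$, which is consistent with the hypergeometric p.m.f.\ \eqref{eqn:committee-union-gh-pmf}.
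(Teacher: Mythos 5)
Your proof is correct, but note that the paper itself offers no proof to compare against: the theorem is stated as an imported result, cited to Berg (1974), immediately after the FSD definition. Your verification is sound and self-contained. The identification $\mathbb{P}[X_{\cup}=i]=\binom{m}{i}\,\Delta^i[A(x)]_{x=0}/A(m)$ follows from \eqref{eqn:committee-union-pmf} exactly as you say, by pulling the $x$-independent constant $\prod_{d}\binom{m}{k_d}=A(m)$ through $\Delta^i$ after the re-indexing remark; independence of the $a_i$ from $m$ is immediate; and your inclusion--exclusion reading of $\Delta^i[A(x)]_{x=0}$ as the number of selections $C_1,\dots,C_n$ with $|C_d|=k_d$ whose union covers an $i$-element pool is the standard binomial (M\"{o}bius) inversion of $A(i)=\sum_{j=0}^{i}\binom{i}{j}\,\Delta^j[A(x)]_{x=0}$, so $i!\,a_i$ is a cardinality and hence $a_i\ge 0$, with the correct support $\max_d k_d \le i \le \sum_d k_d$. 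One remark: the nonnegativity could be had even more cheaply from the paper's own material --- taking $m=i$ in \eqref{eqn:committee-union-pmf} gives $\Delta^i[A(x)]_{x=0} = A(i)\,\mathbb{P}[X_{\cup}=i \mid m=i] \ge 0$ whenever $A(i)>0$, while $A(i)=0$ forces every $A(j)=0$ for $j\le i$ and hence $\Delta^i[A(x)]_{x=0}=0$ --- but your combinatorial sieve buys more: it identifies $i!\,a_i$ explicitly as a covering count, which is precisely the structural content behind Berg's characterization.
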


Berg~\cite{Berg1974Factorial} introduced FSDs with regard to the biological capture-recapture problem and reported unbiased parameter estimators for this family of distributions.
Estimates for the number of urns $m$ and the number of balls $n$ based on mean occupancy $\mu$ are provided in Section~\ref{subsec:estimators}.


\subsubsection{Single Committee Intersection Problem}
\label{subsubsec:single-committee-intersection}

In the case where each department nominates a single committee ($n_d=1 \, \,  \forall d \in [c])$, the committee union and intersection problems are complementary.
The following lemma formalizes this claim.


\begin{lemma}
\label{lem:intersect-union-complement}
Let $X_{\cap}$ be a committee intersection r.v. with parameters, $m$, $k_1,\dots,k_c \in \mathbb{N}$ and $n_1=\dots=n_c=1$. Then
    \begin{align}
    \mathbb{P}\left[X_{\cap} = i \mid m, n_d=1, k_d \right] = \mathbb{P}\left[ X_{\cup} = m - i \mid m, n_d = 1, k_d \right]
    \end{align}
for all $0 \le i \le m$.
\end{lemma}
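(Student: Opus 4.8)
The plan is to prove the identity combinatorially, by exhibiting a measure-preserving correspondence of sample spaces that exchanges the intersection occupancy with the complement of the union occupancy. Since $n_d=1$, each department $d$ contributes a single committee $S_d$, realized as a uniformly random $k_d$-subset of the $m$ urns, with $S_1,\dots,S_c$ drawn independently. In this language $X_{\cap}=\bigl|\bigcap_{d=1}^c S_d\bigr|$ counts the urns lying in every committee, while $X_{\cup}=\bigl|\bigcup_{d=1}^c S_d\bigr|$ counts the urns lying in at least one committee.

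First I would introduce the complementation map $\phi$ sending an outcome $(S_1,\dots,S_c)$ to $(\overline{S_1},\dots,\overline{S_c})$, where $\overline{S_d}=[m]\setminus S_d$. The crucial point is De~Morgan's law at the level of occupancy: an urn belongs to every $S_d$ exactly when it belongs to none of the complements $\overline{S_d}$, so
\[
\bigcap_{d=1}^c S_d \;=\; [m]\setminus \bigcup_{d=1}^c \overline{S_d},
\qquad\text{hence}\qquad
X_{\cap}(\omega)\;=\;m-\Bigl|\,\bigcup_{d=1}^c \overline{S_d}\,\Bigr|.
\]
Thus the intersection occupancy of the original configuration equals $m$ minus the union occupancy of the complemented configuration $\phi(\omega)$.

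Next I would verify that $\phi$ is measure preserving and carries the intersection ensemble onto the corresponding union ensemble: complementation is an involution on subsets of $[m]$ that turns a uniformly chosen $k_d$-subset into a uniformly chosen $(m-k_d)$-subset and preserves independence across departments, so $\phi$ pushes the product law of the intersection problem forward to the product law of the matching union problem. Combining this with the pointwise identity of the previous step yields $\mathbb{P}[X_{\cap}=i]=\mathbb{P}[X_{\cup}=m-i]$ for every $0\le i\le m$, which is the claim. As an independent consistency check one could instead equate the binomial moments $\mathbb{E}\binom{X_{\cap}}{r}=\binom{m}{r}\prod_{d}\binom{m-r}{k_d-r}/\binom{m}{k_d}$ against the moments extracted from the union p.m.f.~\eqref{eqn:committee-union-pmf} under the reflection $i\mapsto m-i$.

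The main obstacle is bookkeeping rather than depth: one must track that complementing the committees sends the size profile $(k_1,\dots,k_c)$ to its complement $(m-k_1,\dots,m-k_c)$, and check that the resulting union configuration is precisely the one recorded on the right-hand side, so that the reflection $i\mapsto m-i$ is paired with the correct parameters. Once the involution $\phi$ is set up cleanly, the De~Morgan identity together with the uniform-to-uniform transfer of measure closes the argument with no further computation.
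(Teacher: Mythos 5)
Your proof is correct and takes essentially the same route as the paper, whose entire argument is a one-line appeal to De Morgan's law (``the intersection of the occupied urns counts the union of unoccupied urns and vice versa''); you have simply made explicit the measure-preserving complementation involution, the transfer of uniform measure from $k_d$-subsets to $(m-k_d)$-subsets, and the preservation of independence that the paper leaves implicit. Your bookkeeping remark is also well taken: complementation sends the size profile to $(m-k_1,\dots,m-k_c)$, which is precisely how the paper later invokes the lemma (substituting $i \mapsto m-i$ \emph{and} $k_d \mapsto m-k_d$ in Theorem~\ref{thm:committee-intersection}), even though the lemma's displayed equation writes the same parameters $k_d$ on both sides.
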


\begin{proof}
Consider De Morgan's law; the intersection of the occupied urns counts the union of unoccupied urns and vice versa.
\end{proof}

\begin{corollary}
The single committee intersection distribution is a FSD.
\end{corollary}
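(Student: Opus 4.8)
The plan is to combine the complementation identity of Lemma~\ref{lem:intersect-union-complement} with the preceding theorem, which exhibits the single--committee union r.v.\ $X_{\cup}$ as a FSD generated by $A(x)=\prod_{d=1}^{n}\binom{x}{k_d}$. Lemma~\ref{lem:intersect-union-complement} gives $\mathbb{P}[X_{\cap}=i]=\mathbb{P}[X_{\cup}=m-i]$, so substituting the FSD form of $X_{\cup}$ and using $\binom{m}{m-i}=\binom{m}{i}$ yields $\mathbb{P}[X_{\cap}=i]=\binom{m}{i}\,\Delta^{m-i}[A(x)]_{x=0}\big/A(m)$. The goal is therefore to recognize the right-hand side as the p.m.f.\ of a FSD, i.e.\ to produce a factorial-series function $B$ with nonnegative coefficients $b_i=\tfrac{1}{i!}\Delta^{i}[B(x)]_{x=0}$ for which $\mathbb{P}[X_{\cap}=i]=\binom{m}{i}\,\Delta^{i}[B(x)]_{x=0}\big/B(m)$; in other words, to show that the FSD family is closed under the complementation $i\mapsto m-i$.

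First I would define the complementary series by reversing the coefficient sequence of $A$: set $b_i:=\tfrac{1}{i!}\,\Delta^{m-i}[A(x)]_{x=0}$ and normalize with $B(m):=A(m)$. Newton's forward-difference expansion $A(m)=\sum_{i=0}^{m}\binom{m}{i}\Delta^{i}[A(x)]_{x=0}=\sum_{i=0}^{m}m^{\underline{i}}\,a_i$ then reindexes, under $i\mapsto m-i$, to $B(m)=\sum_{i=0}^{m}m^{\underline{i}}\,b_i$, which confirms both that $(b_i)$ is a bona fide factorial series for $B$ and that the normalization is self-consistent. With this choice the target identity $\mathbb{P}[X_{\cap}=i]=\binom{m}{i}\,\Delta^{i}[B(x)]_{x=0}\big/B(m)$ holds term by term.

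It remains to verify the defining positivity $b_i\ge 0$. This is inherited directly from the union: because $A(x)=\prod_{d=1}^{n}\binom{x}{k_d}$ is a product of binomial coefficients, its forward differences at the origin are nonnegative -- which is precisely what certifies $X_{\cup}$ as a FSD in the preceding theorem -- and $b_i=\tfrac{(m-i)!}{i!}\,a_{m-i}$ is a reindexed and positively rescaled copy of these same differences, hence nonnegative.

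The main obstacle is conceptual rather than computational: the reversal couples the summation index $i$ to the size parameter $m$ through $b_i\propto\Delta^{m-i}[A(x)]_{x=0}$, so one must be precise about the sense in which the complementary coefficients are ``independent of $m$.'' This coupling is exactly the De~Morgan role-reversal between the pool size and the committee sizes, and it is the point at which I would either invoke Berg's~\cite{Berg1974Factorial} closure of the FSD family under complementation or re-anchor the factorial series at the complementary committee parameters. Once the reversed series is certified as a legitimate FSD generator, the normalization and nonnegativity steps reduce to routine finite-difference bookkeeping.
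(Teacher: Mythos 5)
Your overall route---the complementation identity of Lemma~\ref{lem:intersect-union-complement} combined with Berg's theorem exhibiting the committee union as a FSD---is exactly the argument the paper intends (the corollary is stated there without a printed proof for precisely this reason). But your execution has one concrete error: the complementation does \emph{not} preserve the committee sizes. De Morgan exchanges each committee with its complement within the pool, so $m-X_{\cap}$ is distributed as a committee \emph{union} with sizes $m-k_d$, not $k_d$; a one-line sanity check is $c=1$, where $X_{\cap}=k_1$ almost surely while $m-X_{\cup}=m-k_1$. The paper's own use of the lemma in the proof of Theorem~\ref{thm:committee-intersection} makes the substitution $k_d \mapsto m-k_d$ explicit, and \eqref{eqn:committee-intersection-finite-pmf} displays the resulting generator $\prod_{d=1}^{c}\binom{x}{m-k_d}$. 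Consequently your display $\mathbb{P}[X_{\cap}=i]=\binom{m}{i}\,\Delta^{m-i}[A(x)]_{x=0}\big/A(m)$ with $A(x)=\prod_{d}\binom{x}{k_d}$ is incorrect as written; the generator must be $\prod_{d}\binom{x}{m-k_d}$.

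The obstacle you flag in your last paragraph is genuine, and only one branch of your ``either/or'' can close it: there is no closure of the FSD family under the reversal $i\mapsto m-i$ in Berg~\cite{Berg1974Factorial} to invoke, and your own computation shows why---the reversed coefficients $b_i\propto\Delta^{m-i}[A(x)]_{x=0}$ are tied to $m$, violating the definitional requirement that the coefficients not depend on $m$. The re-anchoring branch is the correct (and essentially the only) resolution: hold the complement sizes $l_d:=m-k_d$ fixed as the FSD data; then $U=m-X_{\cap}$ is a committee union r.v.\ with sizes $l_1,\dots,l_c$, hence a FSD with the $m$-independent generator $A(x)=\prod_{d=1}^{c}\binom{x}{l_d}$ by the preceding theorem, and the intersection distribution is its reflection. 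Note that this re-anchoring simultaneously repairs the generator error above. Your remaining bookkeeping is fine: the reindexed Newton expansion giving $\sum_{i=0}^{m} m^{\underline{i}}\,b_i = A(m)$ via $\binom{m}{i}=\binom{m}{m-i}$ is correct, as is the transfer of nonnegativity of the forward differences at the origin.
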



\begin{theorem}
    \label{thm:committee-intersection}
    Let $X_{\cap}$ be a single committee intersection r.v. with parameters, $m$, $k_1,\dots,k_c \in \mathbb{N}$.
    Then the p.m.f. of $X_{\cap}$ is
        \begin{align}
        \label{eqn:committee-intersection-finite-pmf}
        \mathbb{P}\left[ X_{\cap} = i \right] & = \binom{m}{i} \Delta^{m-i} \left\{ \prod\limits_{d=1}^c \frac{\binom{x}{m - k_d}}{\binom{m}{m - k_d}} \right\}_{x=0} \\
        \label{eqn:committee-intersection-gh-pmf}
        & = \binom{m}{i}^{\!\!1-c} \left( \prod\limits_{d=1}^c \binom{k_d}{i} \right) \pFq{c}{c-1}{i-k_1,\dots,i-k_c}{i-m,\dots,i-m}{1}
        \end{align}
    and the $r$\textsuperscript{th} binomial moment is
        \begin{align}
        \label{eqn:committee-intersection-binom-moment}
        \mathbb{E}\left[ \binom{X_{\cap}}{r} \right] & = \binom{m}{r}^{\!\! 1-c} \, \prod\limits_{d=1}^c \binom{k_d}{r} \, .
        \end{align}
\end{theorem}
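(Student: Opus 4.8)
The plan is to derive both p.m.f.\ forms from the committee-\emph{union} results via De~Morgan duality, and to obtain the binomial moment by a direct exchangeability computation. For the finite-difference p.m.f.\ \eqref{eqn:committee-intersection-finite-pmf}, I would unpack the De~Morgan argument behind Lemma~\ref{lem:intersect-union-complement}. Writing $C_d$ for the single committee of department $d$, an urn is intersection-occupied iff it lies in every $C_d$, equivalently iff it avoids all complementary committees $\overline{C_d}$ of size $m-k_d$. Hence $X_{\cap}=m-X_{\cup}'$, where $X_{\cup}'$ is a committee-union variable whose committee sizes are the complements $m-k_d$. Substituting into the union p.m.f.\ \eqref{eqn:committee-union-pmf} at the reflected index $m-i$, with sizes $m-k_d$, and using $\binom{m}{m-i}=\binom{m}{i}$ reproduces \eqref{eqn:committee-intersection-finite-pmf} at once.

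For the hypergeometric form \eqref{eqn:committee-intersection-gh-pmf}, I would feed the same reflection into the hypergeometric union p.m.f.\ \eqref{eqn:committee-union-gh-pmf}, evaluated at $m-i$ with sizes $m-k_d$. The upper parameters become $(m-k_d)-(m-i)=i-k_d$ and the lower parameters become $-(m-i)=i-m$, which already yields the desired ${}_cF_{c-1}$ factor. The remaining work is purely algebraic: the prefactor $\binom{m}{m-i}\prod_{d}\binom{m-i}{m-k_d}/\binom{m}{m-k_d}$ must collapse to $\binom{m}{i}^{1-c}\prod_{d}\binom{k_d}{i}$. This follows factor-by-factor from the subset-of-a-subset identity $\binom{m}{i}\binom{m-i}{m-k_d}=\binom{m}{k_d}\binom{k_d}{i}$, which (using $\binom{m}{m-k_d}=\binom{m}{k_d}$) rewrites each term $\binom{m-i}{m-k_d}/\binom{m}{m-k_d}$ as $\binom{k_d}{i}/\binom{m}{i}$; collecting the $c$ copies of $\binom{m}{i}^{-1}$ against the leading $\binom{m}{i}$ produces the exponent $1-c$. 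I expect this bookkeeping — tracking the complementary sizes and the reflected index while simplifying the prefactor — to be the main (though routine) obstacle.

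Finally, the binomial moment \eqref{eqn:committee-intersection-binom-moment} admits a short self-contained proof via \eqref{eqn:binom-moment} that bypasses the reflection entirely. Taking $\chi_l=1$ for the event that urn $l$ lies in every committee, exchangeability reduces $\mathbb{E}\!\left[\binom{X_{\cap}}{r}\right]$ to $\binom{m}{r}\,\mathbb{P}\!\left[\chi_1\cdots\chi_r=1\right]$. Because the $c$ departments choose their committees independently, this joint probability factors over $d$, and for one uniform $k_d$-subset the chance of containing a fixed $r$-set is $\binom{m-r}{k_d-r}/\binom{m}{k_d}=\binom{k_d}{r}/\binom{m}{r}$ — again the subset-of-a-subset identity. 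Multiplying the $c$ factors against the leading $\binom{m}{r}$ gives $\binom{m}{r}^{1-c}\prod_{d}\binom{k_d}{r}$, as claimed. I would also remark that the same formula can be read off directly from the hypergeometric p.m.f.\ \eqref{eqn:committee-intersection-gh-pmf}, giving an internal consistency check on the two derivations.
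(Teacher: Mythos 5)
Your proposal is correct and follows essentially the same route as the paper's own proof: the p.m.f.\ forms are obtained by the De~Morgan reflection $i \mapsto m-i$, $k_d \mapsto m-k_d$ applied to \eqref{eqn:committee-union-pmf} and \eqref{eqn:committee-union-gh-pmf} (your subset-of-a-subset identity is the same binomial identity the paper invokes to collapse the prefactor), and the binomial moments come from \eqref{eqn:binom-moment} together with independence across departments and $\mathbb{P}[\chi_1\cdots\chi_r=1]=\prod_d \binom{k_d}{r}/\binom{m}{r}$. Your write-up is in fact slightly more explicit than the paper's, since it spells out that the reflected union variable uses the complementary committee sizes $m-k_d$, a detail the statement of Lemma~\ref{lem:intersect-union-complement} leaves implicit.
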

\begin{proof}
    Using Lemma~\ref{lem:intersect-union-complement}, substitute $i \mapsto m-i$ and $k_d \mapsto m - k_d$ in \eqref{eqn:committee-union-pmf} and \eqref{eqn:committee-union-gh-pmf} to get \eqref{eqn:committee-intersection-finite-pmf} and \eqref{eqn:committee-intersection-gh-pmf}, respectively -- noting that $\binom{i}{k_d}/\binom{m}{k_d} = \binom{m-k_d}{i-k_d}/\binom{m}{i}$ in \eqref{eqn:committee-union-gh-pmf}.
    
    The binomial moments can be established using \eqref{eqn:binom-moment} by noting that 
        \begin{align*}
        \mathbb{P}\left[ \prod_{l=i}^r \chi_l = 1 \right] = \prod_{d=1}^c \frac{ \binom{k_d}{r} }{ \binom{m}{r} } \, .
        \end{align*}
    for each $0 \le r \le m$.
\end{proof}

The family of \textit{generalized hypergeometric factorial-moment distributions} (GHFMDs) are characterized by p.g.f.s of the form $_pF_q\left[(\mathbf{a}); (\mathbf{b}); \lambda (z-1)\right]$, and include the binomial, hypergeometric, and Poisson distributions.
Further details about GHFMDs can be found in \cite{Kemp1974Factorial,Kemp1978Probability,Johnson2005Univariate}.


\begin{theorem}
    \label{thm:ghfd}
    The single committee intersection distribution is a GHFMD.
\end{theorem}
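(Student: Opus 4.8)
The plan is to exhibit the probability generating function (p.g.f.) of $X_{\cap}$ explicitly as a generalized hypergeometric function of $z-1$, which is precisely Kemp's defining property of a GHFMD. The starting point is the standard factorial-moment expansion of a p.g.f.: since $X_{\cap}$ has finite support in $\{0,1,\dots,m\}$, I would write $z^{X_\cap}=(1+(z-1))^{X_\cap}$, expand by the binomial theorem, and exchange the (finite) sum with the expectation to obtain
\begin{align*}
G(z) = \mathbb{E}\!\left[ z^{X_\cap} \right] = \sum_{r=0}^{m} \mathbb{E}\!\left[ \binom{X_\cap}{r} \right] (z-1)^r .
\end{align*}
Substituting the binomial moments \eqref{eqn:committee-intersection-binom-moment} from Theorem~\ref{thm:committee-intersection} then gives the closed sum
\begin{align*}
G(z) = \sum_{r=0}^{m} \binom{m}{r}^{\!1-c} \left( \prod_{d=1}^c \binom{k_d}{r} \right) (z-1)^r ,
\end{align*}
so the whole task reduces to recognizing the right-hand side as a $_cF_{c-1}$.

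The second step is to convert each binomial coefficient to a rising factorial via $\binom{N}{r} = (-1)^r (-N)^{(r)}/r!$ and to track the bookkeeping of the $(-1)$- and $r!$-powers. The exponent $1-c$ on $\binom{m}{r}$ places $c-1$ copies of $(-m)^{(r)}$ in the denominator, the product contributes $(-k_1)^{(r)},\dots,(-k_c)^{(r)}$ in the numerator, and the accumulated signs and factorials collapse: the $(-1)$-powers combine to a single $(-1)^r$ that is absorbed into the argument, while the $r!$-powers telescope down to the lone $1/r!$ required by the hypergeometric normalization. The surviving series is
\begin{align*}
G(z) = \sum_{r=0}^{m} \frac{\prod_{d=1}^c (-k_d)^{(r)}}{\big( (-m)^{(r)} \big)^{c-1}} \, \frac{\big(-(z-1)\big)^r}{r!} = \pFq{c}{c-1}{-k_1,\dots,-k_c}{-m,\dots,-m}{-(z-1)} ,
\end{align*}
with $c$ upper parameters $-k_d$, exactly $c-1$ lower parameters all equal to $-m$, and argument $\lambda(z-1)$ with $\lambda=-1$. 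This is the GHFMD form, so the theorem follows. As a consistency check, this $_cF_{c-1}$ reproduces \eqref{eqn:committee-intersection-binom-moment} term-by-term, and, combined with Lemma~\ref{lem:intersect-union-complement} and the relation $G_{X_\cap}(z)=z^m G_{X_\cup}(1/z)$, it yields the union p.g.f. of Corollary~\ref{cor:committee-union-pgf}.

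The one place that needs genuine care — rather than routine algebra — is justifying that this finite expression really is a bona fide $_cF_{c-1}$ and not merely a formal series: I must check that no denominator factor $\big((-m)^{(r)}\big)^{c-1}$ vanishes over the range being summed. Since $(-m)^{(r)}=0$ only when $r\ge m+1$, whereas each factor $(-k_d)^{(r)}$ already forces the summand to vanish for $r>k_d$ and $k_d\le m$, the series terminates at $r=\min_d k_d \le m < m+1$, so every denominator encountered is nonzero and the expression is well defined. The interchange of expectation and summation in the first step is likewise unproblematic because the support of $X_\cap$ is finite.
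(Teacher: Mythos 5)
Your proof is correct and follows essentially the same route as the paper's: both expand the p.g.f. in binomial moments via $G(z)=\sum_r \mathbb{E}\left[\binom{X_\cap}{r}\right](z-1)^r$, substitute the moments from Theorem~\ref{thm:committee-intersection}, and identify the resulting terminating sum as a ${}_cF_{c-1}$ with argument $1-z$. The only differences are expository --- you spell out the rising-factorial bookkeeping and verify that no denominator factor vanishes on the summation range, details the paper leaves implicit.
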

\begin{proof}
    Theorem~\ref{thm:committee-intersection} gives the binomial moment o.g.f. as
        \begin{align*}
        G(t+1) & = \sum\limits_{i=0}^m \binom{m}{i} \frac{\binom{k_1}{i} \cdots \binom{k_c}{i}}{\binom{m}{i} \cdots \binom{m}{i}} t^i 
        = \pFq{c}{c-1}{-k_1, \dots, -k_c}{-m,\dots,-m}{-t}
        \end{align*}
    which is also the factorial moment e.g.f.; hence, the ordinary p.g.f. is found by setting $t=z-1$:
        \begin{align}
        \label{eqn:unoccupied-pgf}
        G(z) = \pFq{c}{c-1}{-k_1,\!\dots,-k_c}{-m, \, \dots \, , \, -m}{1-z}
        \end{align}
    which fits the form of a GHFMD.
\end{proof}

\begin{proof}[Proof of Corollary~\ref{cor:committee-union-pgf}]
    Applying Lemma~\ref{lem:intersect-union-complement} to \eqref{eqn:unoccupied-pgf}, the p.g.f. for the unoccupied urns $U_{\cup} = m - X_{\cup}$ is
        \begin{align*}
        G(z) = \pFq{c}{c-1}{-(m-k_1),\dots,-(m-k_c)}{-m,\dots,-m}{1-z} \, .
        \end{align*}
    The p.g.f. of $X_{\cup}$ can be obtained through $\mathbb{P}[X_{\cup}=i] = \mathbb{P}[U_{\cup}=m-i]$, which amounts to reversing the order of the power series and shifting the power of the base by $m$, i.e., setting $z \mapsto z^{-1}$ and multiplying $G(z^{-1})$ by $z^m$.
\end{proof}

Kemp and Kemp~\cite{Kemp1974Factorial} use the differential equation for generalized hypergeometric functions to express the probabilities, raw moments, and factorial moments of GHFMDs in terms of recurrence relations.
Kumar~\cite{Kumar2009EGHPD} also provides an alternative recursive relation for the broader class of \textit{extended generalized hypergeometric probability distributions} (EGHPDs) which contains the GHFMDs.
Application of Kemp's method yields the following recursive relations.


\begin{corollary}
    Let $X_{\cap}$ be a committee intersection r.v. with $n_d=1$ committees of sizes $k_d$ for each department $d \in [c]$ drawn from a pool of $m$ faculty members. 
    Then
        \begin{align}
        \label{eqn:recursive-pmf}
        \mathbb{P}\left[X_{\cap} = i \right] &= \sum\limits_{j=1}^{c-1} (-1)^j \binom{i+j}{j} Q_{m,\vec{k}}(i,j) \, \mathbb{P}\left[X_{\cap} = i + j \right]
        \end{align}
    where
        \begin{align*}
        Q_{m,\vec{k}}(i,j) & = \sum\limits_{u=j}^c (-1)^{c-u} \sum\limits_{v=j}^u \stirling{u}{v} \binom{i}{v\!-\!j} (v\!-\!1)! \\
        & \qquad \times \left[ j \binom{c\!-\!1}{u\!-\!1} (m\!+\!1)^{c-u} - v \, e_{c-u} (\vec{k}) \right] \bigg/ \prod\limits_{l=1}^c (i-k_l)
        \end{align*}
    such that $e_{s}(\vec{k})$ is an elementary symmetric polynomial in $k_1,\dots,k_c$.
    The factorial and binomial moments can be expressed recursively as
        \begin{align*}
        \mu_{[r+1]} = (m-r) \left( \prod_{j=1}^c \frac{k_j - r}{m-r} \right) \mu_{[r]} \, \mbox{ and } \,
        b_{(r+1)} = \frac{m-r}{r+1} \left( \prod_{j=1}^c \frac{k_j - r}{m-r} \right) b_{(r)} \, .
        \end{align*}
\end{corollary}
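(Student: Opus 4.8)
The plan is to handle the two moment recurrences and the p.m.f. recurrence separately, since they require very different amounts of work. The factorial and binomial moment recurrences are immediate consequences of the closed-form binomial moment in Theorem~\ref{thm:committee-intersection}. Writing $b_{(r)} = \mathbb{E}\big[\binom{X_{\cap}}{r}\big] = \binom{m}{r}^{1-c}\prod_{d=1}^c\binom{k_d}{r}$ and $\mu_{[r]} = r!\,b_{(r)}$, I would simply form the ratios $b_{(r+1)}/b_{(r)}$ and $\mu_{[r+1]}/\mu_{[r]}$, using $\binom{N}{r+1}/\binom{N}{r} = (N-r)/(r+1)$ for each of the $c$ numerator binomials and for the $(1-c)$ copies of $\binom{m}{r}$. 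The powers of $(r+1)$ telescope (one finds a net power $-c+(c-1)=-1$), leaving exactly $\tfrac{m-r}{r+1}\prod_j\tfrac{k_j-r}{m-r}$ for the binomial moments and $(m-r)\prod_j\tfrac{k_j-r}{m-r}$ for the factorial moments. This is a one-line verification requiring no hypergeometric machinery.

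For the p.m.f.\ recurrence I would invoke Kemp's method, starting from the p.g.f.\ \eqref{eqn:unoccupied-pgf}, namely $G(z)={}_{c}F_{c-1}[-k_1,\dots,-k_c;\,-m,\dots,-m;\,1-z]$. The generalized hypergeometric series $y={}_pF_q[(\mathbf{a});(\mathbf{b});x]$ obeys $\big[\vartheta\prod_j(\vartheta+b_j-1)-x\prod_i(\vartheta+a_i)\big]y=0$ with $\vartheta=x\tfrac{d}{dx}$. Substituting $x=1-z$, $a_i=-k_i$, $b_j=-m$, the chain rule converts $\vartheta$ into the operator $\Theta:=(z-1)\tfrac{d}{dz}$, so that $G$ satisfies $\big[\Theta(\Theta-m-1)^{c-1}-(1-z)\prod_{l=1}^c(\Theta-k_l)\big]G=0$.

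The key observation, and the reason the recurrence is one order lower than a naive coefficient comparison would give, is that \emph{both} terms of this operator carry the factor $(z-1)$: the first because $\Theta=(z-1)\tfrac{d}{dz}$, the second because $1-z=-(z-1)$. Cancelling this common factor collapses the equation to the first-order-in-$\tfrac{d}{dz}$ form
\[ \frac{d}{dz}\big[(\Theta-m-1)^{c-1}G\big] + \prod_{l=1}^c(\Theta-k_l)\,G = 0 . \]
Writing $G=\sum_i p_i z^i$ and using $\Theta z^i=i(z^i-z^{i-1})$, I would extract the coefficient of $z^i$. The leading contributions to $p_{i+c}$ from the two terms carry opposite signs $(-1)^{c-1}$ and $(-1)^{c}$ and cancel, so that only $p_i,\dots,p_{i+c-1}$ survive; moreover $p_i$ enters solely through the constant term of $\prod_l(\Theta-k_l)$, with coefficient $\prod_{l=1}^c(i-k_l)$. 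Solving for $p_i$ then yields precisely the stated form, with $\prod_l(i-k_l)$ as the denominator of $Q_{m,\vec{k}}(i,j)$.

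The main obstacle is the bookkeeping that matches the coefficient of each $p_{i+j}$ with $(-1)^j\binom{i+j}{j}\,Q_{m,\vec{k}}(i,j)$. This calls for expanding $(\Theta-m-1)^{c-1}$ by the binomial theorem (which produces the factors $\binom{c-1}{u-1}(m+1)^{c-u}$) together with $\prod_l(\Theta-k_l)=\sum_s(-1)^s e_s(\vec{k})\,\Theta^{c-s}$ (which produces the elementary symmetric polynomials $e_{c-u}(\vec{k})$), and then evaluating the action of each power $\Theta^u$ on the coefficient sequence. The Stirling numbers $\stirling{u}{v}$ enter at this last stage: passing to $y=z-1$ diagonalizes $\Theta$ as $y\tfrac{d}{dy}$, and re-expanding $z^{i+j}=(1+y)^{i+j}$ introduces the binomial factors together with the identity $x^{u}=\sum_{v}\stirling{u}{v}\,x^{\underline{v}}$, after which the $\binom{i}{v-j}(v-1)!$ and the shift of the index by $j$ fall out. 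Carefully tracking these nested sums and the overall sign $(-1)^j$ so as to reproduce $Q_{m,\vec{k}}(i,j)$ term by term is the only genuinely laborious step; the order reduction and the moment recurrences are routine.
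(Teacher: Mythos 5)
Your proposal is correct, and for the p.m.f.\ recurrence it follows essentially the paper's own route: Kemp's method applied to the hypergeometric p.g.f.\ \eqref{eqn:unoccupied-pgf}, i.e., coefficient extraction from the differential equation \eqref{eqn:ghfmd-diff-eqn} after the substitution $t=z-1$ (your $x=1-z$ is the same change of variable). In fact you supply two details the paper leaves implicit: the cancellation of the common factor $(z-1)$, and the sign cancellation of the $p_{i+c}$ contributions, which together cut the naive depth down to $c-1$; both check out (the extremal downshift contributions are $(-1)^{c-1}(i+c)!/i!$ and $(-1)^{c}(i+c)!/i!$, and $p_i$ indeed appears only through the diagonal action of $\prod_l(\Theta-k_l)$, with coefficient $\prod_{l=1}^c(i-k_l)$, since the term $\frac{d}{dz}(\Theta-m-1)^{c-1}$ strictly lowers the degree). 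Where you genuinely diverge is the moment recurrences: the paper derives them from the same differential equation by equating coefficients of the binomial-moment o.g.f.\ $G(1+t)$, whereas you obtain them in one line as ratios of the closed form $b_{(r)}=\binom{m}{r}^{1-c}\prod_{d=1}^c\binom{k_d}{r}$ from Theorem~\ref{thm:committee-intersection}, together with $\mu_{[r]}=r!\,b_{(r)}$; your exponent bookkeeping $(r+1)^{c-1-c}=(r+1)^{-1}$ is right. This shortcut is legitimate and non-circular, since \eqref{eqn:committee-intersection-binom-moment} is proved independently of any recurrence; what the paper's ODE derivation buys is an independent re-derivation (it notes the result ``concurs with'' the closed form, a consistency check your argument forgoes), while your ratio argument buys brevity and avoids the hypergeometric machinery for that half of the statement. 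Like the paper, you leave the term-by-term matching of the nested sums against $Q_{m,\vec{k}}(i,j)$ as bookkeeping, which is no less detail than the paper's own proof provides.
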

\begin{proof}
    Since $X_{\cap}$ is a GHFMD, the f.m.g.f. is a generalized hypergeometric function, so it is a solution to the differential equation
        \begin{align}
        \label{eqn:ghfmd-diff-eqn}
        \vartheta \left( \vartheta  - (m + 1) \right)^{c-1} G(1 + t) = -t \prod_{j=1}^{c} \left(\vartheta - k_j \right) G(1 + t)
        \end{align}
    where $\vartheta$ is the differential operator $\vartheta = t D = t \frac{d}{dt}$.
    Expanding the functional equation in terms of $D$ and the fact that $G(1+t)$ is the o.g.f. for the binomial moments leads to the following recurrence
        \begin{align*}
        r \, (r - (m+1))^{c-1} \, b_{(r)} = - \left( \prod_{j=1}^c (r-1) - k_j \right) b_{(r-1)} \, ,
        \end{align*}
    i.e.,
        \begin{align*}
        b_{(r)} = \frac{m - (r-1)}{(r-1)+1} \left( \prod_{j=1}^c \frac{ k_j - (r-1) }{ m - (r-1) } \right) b_{(r-1)}
        \end{align*}
    which also concurs with \eqref{eqn:committee-intersection-binom-moment}.
    
    Equation~\eqref{eqn:recursive-pmf} can be obtained in a similar way by substituting $t=z-1$ in \eqref{eqn:ghfmd-diff-eqn} and equating the coefficients of the resulting polynomial.
\end{proof}


\subsubsection{General Committee Intersection Problem}
\label{subsubsec:general-committee-intersection}

Nishimura and Sibuya \cite{Nishimura1988Occupancy} consider the case of the committee intersection problem where $c=2$ and $k_1 = k_2 = 1$ but $n_1$ and $n_2$ are free, and note that the factorial moments can be expressed compactly as
	\begin{equation}
	\label{eqn:two-color-factorial-moments}
	\mathbb{E}[{X_{\cap}}^{\underline{r}} \mid m, n_1, n_2 ] = \frac{m^{\underline{r}}}{m^{n_1+n_2}} \nabla^r m^{n_1} \, \nabla^r m^{n_2} \, ,
    \end{equation}
i.e., the binomial moments are 
    $$\mathbb{E}\left[\binom{X_{\cap}}{r} \mid m, n_1, n_2 \right] = \binom{m}{r} \frac{\nabla^r m^{n_1}}{m^{n_1}} \frac{\nabla^r m^{n_2}}{m^{n_2}}$$
which is shown in the following theorem to directly generalize to arbitrary department and committee sizes.


\begin{theorem}
    \label{thm:mutual-batch-occupy-moment}
    Let $X_{\cap}$ be a committee intersection occupancy r.v. with $c$ departments having $n_d$ committees of size $k_d$ for each department $d \in [c]$. Then
        \begin{align}
        \mathbb{P}\left[X_{\cap} = i \right] & = \binom{m}{i} \Delta^{m-i} \left\{ \prod\limits_{d=1}^c \nabla^{m-y} \left[ \frac{\binom{x}{k_d}^{n_d} }{\binom{m}{k_d}^{n_d}} \right]_{x=m}  \right\}_{y=0} \\
        & = \binom{m}{i} \sum_{j=0}^{m-i} (-1)^j \binom{m-i}{j} \prod\limits_{d=1}^c \sum\limits_{l=0}^{i+j} (-1)^{i+j-l} \binom{i+j}{l} \left( \frac{ \binom{m-l}{k_d} }{ \binom{m}{k_d} } \right)^{\!\!n_d}
        \end{align}
    with binomial moments
        \begin{align}
        \label{eq:mutual-batch-expected}
            \mathbb{E}\left[ \binom{X_{\cap}}{r} \right] = \binom{m}{r} \, \prod\limits_{d=1}^{c}  \nabla^r \! \left\{ \left[ \frac{ \binom{x}{k_d}}{\binom{m}{k_d}} \right]^{n_d} \right\}_{x=m} \, .
        \end{align}
\end{theorem}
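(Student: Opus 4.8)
The plan is to recognize $X_{\cap}$ as a genuine occupancy random variable in the sense of Section~\ref{sec:occupy} and to compute its single-urn-block probabilities by exploiting the independence of the $c$ departments. Write $\chi_l^{(d)}$ for the indicator that urn $l$ receives at least one ball from department $d$, so that the intersection-occupancy indicator factors as $\chi_l = \prod_{d=1}^c \chi_l^{(d)}$. The whole calculation then rests on the single identity
\[
\mathbb{P}\!\left[\prod_{l=1}^r \chi_l = 1\right] = \prod_{d=1}^c \mathbb{P}\!\left[\prod_{l=1}^r \chi_l^{(d)} = 1\right],
\]
which I would establish first and then feed into the general occupancy machinery of \eqref{eqn:binom-moment}.

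First I would justify this factorization. Since $\prod_{l=1}^r \chi_l = \prod_{d=1}^c\bigl(\prod_{l=1}^r \chi_l^{(d)}\bigr)$ and the committee draws made by distinct departments are mutually independent, the expectation of the product splits across departments; note that we do \emph{not} need independence among the $\chi_l^{(d)}$ within a fixed department, as the correlated single-department factor is already handled exactly. Each factor is the probability that a fixed block of $r$ urns is entirely occupied by department $d$ alone, which is precisely the committee problem of Section~\ref{subsec:committee-occupancy} with $n_d$ committees of size $k_d$, so \eqref{eqn:batch-factorial-moments} supplies
\[
\mathbb{P}\!\left[\prod_{l=1}^r \chi_l^{(d)} = 1\right] = \nabla^r\!\left\{\left[\frac{\binom{x}{k_d}}{\binom{m}{k_d}}\right]^{n_d}\right\}_{x=m}.
\]
Because urn occupancy is exchangeable, this value does not depend on which $r$ urns are chosen, so the hypothesis underlying \eqref{eqn:binom-moment} is met.

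With the block probabilities in hand, the binomial-moment formula \eqref{eq:mutual-batch-expected} is immediate: substituting the factorized $p_r$ into \eqref{eqn:binom-moment} yields $\mathbb{E}[\binom{X_{\cap}}{r}] = \binom{m}{r}\prod_d \nabla^r\{\cdots\}_{x=m}$. For the p.m.f.\ I would feed the same $p_{i+j}=\prod_d \nabla^{i+j}\{\cdots\}_{x=m}$ into the inclusion--exclusion p.m.f.\ displayed immediately after \eqref{eqn:binom-moment}; expanding each $\nabla^{i+j}$ as its alternating binomial sum over $l$ produces the explicit double-sum form. To obtain the compact first form, I would repackage the outer alternating sum $\sum_j (-1)^j\binom{m-i}{j}(\cdots)$ as a forward difference, using the index-reversal relation $\Delta^s[f]_{x=0}=\nabla^s[f]_{x=s}$ recorded in Section~\ref{sec:finite-differences} to fold it into $\Delta^{m-i}\{\cdots\}_{y=0}$.

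I expect the only genuine obstacle to be the factorization step, since everything afterward is a mechanical substitution of previously established single-department formulas together with routine re-indexing of finite-difference expansions. The care required there is to confirm that inter-department independence lets the expectation of $\prod_{d}\prod_l \chi_l^{(d)}$ split as a product of per-department expectations, and that exchangeability within each department makes each per-department block probability agree with \eqref{eqn:batch-factorial-moments}. As sanity checks I would verify that taking all $n_d=1$ recovers Theorem~\ref{thm:committee-intersection}, and that the two-color case $k_1=k_2=1$ reproduces the Nishimura--Sibuya moments \eqref{eqn:two-color-factorial-moments}.
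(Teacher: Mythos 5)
Your proposal is correct and takes essentially the same route as the paper's proof: factor the block occupancy probability $\mathbb{P}\bigl[\prod_{l=1}^r \chi_l = 1\bigr]$ across departments by independence, identify each factor with the single-department committee expression \eqref{eqn:batch-factorial-moments}, and substitute into \eqref{eqn:binom-moment} to get the binomial moments and then the p.m.f. Your per-department notation $\chi_l^{(d)}$ in fact states the factorization more precisely than the paper's own display, which reuses the same symbol $\chi_j$ on both sides of the product identity.
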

\begin{proof}
    Equation~\eqref{eqn:binom-moment} gives
        \begin{align*}
    	\mathbb{E}\left[ {X_{\cap}}^r \right] = \sum\limits_{i=1}^{r} \stirling{r}{i} \, m^{\underline{i}} \,\: \mathbb{P} \! \left[ \prod\limits_{j=1}^{i}  \chi_j = 1 \right] \label{eqn:multinomial},
        \end{align*}
    and the product $\prod_j \chi_j = 1$ indicates that the first $i$ urns are occupancy by all $c$ colors. 
    Since the balls of each color are distributed independently, Lemma~\ref{lem:classic-moments-2} and \eqref{eqn:batch-factorial-moments} can be applied to a r.v. $X_d$ each color $d \in [c]$ as
    	\begin{equation*}
    	\mathbb{P} \! \left[ \prod\limits_{j=1}^i \chi_j = 1 \right] 
    	    = \prod\limits_{d=1}^c \mathbb{P} \! \left[ \prod\limits_{j=1}^i \chi_j = 1 \right] \!
    	    = \prod\limits_{d=1}^c \frac{\mathbb{E}\!\left[\binom{X_d}{i}\right]}{\binom{m}{i}} 
    	    = \prod\limits_{d=1}^c \frac{\nabla^i \binom{m}{k_d}^{n_d}}{\binom{m}{k_d}^{n_d}}
        \end{equation*}
    which yields \eqref{eq:mutual-batch-expected}, and the p.m.f. can be computed from the binomial moments.
\end{proof}


\begin{corollary}
    Let $X_{\cap}$ be a committee intersection r.v. with $c$ departments that each select committees of varying sizes $k_{d,1},\dots,k_{d,n_d}$. Then
        \begin{align}
        \mathbb{P}\left[X_{\cap} = i \right] = \binom{m}{i} \Delta^{m-i} \left\{ \prod\limits_{d=1}^c \nabla^{m-y} \left[ \prod\limits_{l=1}^{n_d} \frac{\binom{x}{k_{d,l}} }{\binom{m}{k_{d,l}}} \right]_{x=m} \right\}_{y=0}
        \end{align}
    with binomial moments
        \begin{align}
            \mathbb{E}\left[ \binom{X_{\cap}}{r} \right] = \binom{m}{r} \, \prod\limits_{d=1}^{c}  \nabla^r \! \left\{ \prod\limits_{l=1}^{n_d} \frac{ \binom{x}{k_{d,l}}}{\binom{m}{k_{d,l}}} \right\}_{x=m} \, .
        \end{align}
\end{corollary}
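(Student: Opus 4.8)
The plan is to mirror the proof of Theorem~\ref{thm:mutual-batch-occupy-moment} almost verbatim, the sole new ingredient being that each department now draws committees of unequal sizes, so that its per-color occupancy is governed by the variable-size committee union formula rather than the equal-size one. I would begin, as there, from the binomial-moment identity~\eqref{eqn:binom-moment}, which reduces everything to computing $\mathbb{P}[\prod_{l=1}^{r}\chi_l=1]$, the probability that each of the first $r$ urns is touched by a ball of \emph{every} color. Because the committees of distinct departments are cast independently, this event factorizes across departments:
\begin{align*}
\mathbb{P}\left[\prod_{l=1}^{r}\chi_l = 1\right] = \prod_{d=1}^{c}\mathbb{P}\left[\prod_{l=1}^{r}\chi_l^{(d)} = 1\right],
\end{align*}
where $\chi_l^{(d)}$ indicates occupancy of urn $l$ by a ball of color $d$.

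The key step is to identify each factor. Fixing a department $d$, its committees of sizes $k_{d,1},\dots,k_{d,n_d}$ constitute exactly a single-department committee union problem with variable committee sizes, i.e.\ Sprott's generalization~\eqref{eqn:variable-committee-sizes}. Applying the committee union binomial moments~\eqref{eqn:committee-union-binom} to this single department (all multiplicities equal to one) gives
\begin{align*}
\mathbb{P}\left[\prod_{l=1}^{r}\chi_l^{(d)} = 1\right] = \frac{\mathbb{E}\!\left[\binom{X_d}{r}\right]}{\binom{m}{r}} = \nabla^r\left\{\prod_{l=1}^{n_d}\frac{\binom{x}{k_{d,l}}}{\binom{m}{k_{d,l}}}\right\}_{x=m}.
\end{align*}
Substituting the product of these factors back into~\eqref{eqn:binom-moment} yields the stated binomial-moment formula for $X_{\cap}$ at once. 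This is the only place where the variable-size hypothesis enters, and it does so purely by swapping the equal-size factor $[\binom{x}{k_d}/\binom{m}{k_d}]^{n_d}$ of Theorem~\ref{thm:mutual-batch-occupy-moment} for $\prod_{l}\binom{x}{k_{d,l}}/\binom{m}{k_{d,l}}$.

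To recover the p.m.f.\ I would invert the binomial moments with the inclusion--exclusion formula immediately following~\eqref{eqn:binom-moment}, writing $p_s := \mathbb{P}[\prod_{l=1}^{s}\chi_l=1] = \prod_{d=1}^{c}\nabla^s\{\prod_{l=1}^{n_d}\binom{x}{k_{d,l}}/\binom{m}{k_{d,l}}\}_{x=m}$, so that $\mathbb{P}[X_{\cap}=i] = \binom{m}{i}\sum_{j=0}^{m-i}(-1)^j\binom{m-i}{j}\,p_{i+j}$. Setting $h(y) := \prod_{d=1}^{c}\nabla^{m-y}\{\prod_{l=1}^{n_d}\binom{x}{k_{d,l}}/\binom{m}{k_{d,l}}\}_{x=m} = p_{m-y}$ and re-indexing the inversion sum by $j \mapsto m-i-j$ converts it into the forward difference $\Delta^{m-i}[h(y)]_{y=0}$, which is precisely the nested-difference expression in the statement.

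The calculation carries no genuine analytic difficulty; the independence factorization and the per-department identity are inherited intact from Theorem~\ref{thm:mutual-batch-occupy-moment} and the committee union corollary. The only point requiring care is the purely notational verification that the outer forward difference $\Delta^{m-i}$ in $y$, acting on the inner backward difference $\nabla^{m-y}$ in $x$, reproduces the plain inclusion--exclusion inversion --- a matter of the index substitution $j \mapsto m-i-j$ and confirming that the sign bookkeeping matches $\Delta^{m-i}f(y)|_{y=0}=\sum_{j}(-1)^{m-i-j}\binom{m-i}{j}f(j)$.
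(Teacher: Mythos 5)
Your proposal is correct and follows the paper's intended argument exactly: the corollary is stated without a separate proof precisely because it repeats the proof of Theorem~\ref{thm:mutual-batch-occupy-moment}, with the equal-size factor $\bigl[\binom{x}{k_d}/\binom{m}{k_d}\bigr]^{n_d}$ replaced by the per-department product $\prod_{l=1}^{n_d}\binom{x}{k_{d,l}}/\binom{m}{k_{d,l}}$ via the variable-size union moments \eqref{eqn:committee-union-binom}. Your index check that $\Delta^{m-i}[p_{m-y}]_{y=0}$ under $j\mapsto m-i-j$ reproduces the inclusion--exclusion inversion is accurate and fills in a step the paper leaves implicit.
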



\subsection{Occupancy Bounds and Statistics}
\label{sec:approximations}

The next two subsections provide some approximations and statistical tools for working with committee distributions, and help to develop a statistical model for Bloom filters and their associated false-positive rates.


\subsubsection{Bounds on Moments}
\label{sec:bounds-on-moments}

The following theorem is useful for establishing some simple bounds on the moments of committee problems.


\begin{theorem}
    \label{thm:batch-occupancy-bounds}
    Fix $m \in \mathbb{N}^+$ and $k,n \in \mathbb{N}^0$. 
    For $0 \le r \le \min\{k, m - k\} \le m$,
        \begin{equation*}
        \left( \frac{\mu}{m} \right)^r \ge \frac{ \nabla^r \! \left[ \binom{x}{k}^{n} \right]_{x=m} }{ \binom{m}{k}^{n} } \ge \frac{ \binom{ \mu }{r} }{ \binom{m}{r} } \ge \frac{\nabla^r \! \left[ x^{n k} \right]_{x=m}}{m^{n k}} 
        \end{equation*}
    where $\mu = m \left( 1 - \left( 1 - \frac{k}{m} \right )^n \right)$ and $\binom{x}{y}= x^{\underline{y}} \, / y! \,$ for $x \in \mathbb{R}$ and $y\in \mathbb{N}$.
\end{theorem}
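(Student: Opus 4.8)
The plan is to recognise each of the four quantities as a familiar probabilistic object and then prove the three links of the chain in turn. Writing $p_r := \mathbb{P}[\chi_1\cdots\chi_r=1]$ for the committee joint--occupancy probability, equation \eqref{eqn:batch-factorial-moments} identifies the second term as $p_r=\mathbb{E}[\binom{X}{r}]/\binom{m}{r}$, while \eqref{eqn:batch-occupancy-mean} gives $\mu/m=p_1$, so the first term is $p_1^{\,r}$ and the third term is $\binom{\mu}{r}/\binom{m}{r}=\mu^{\underline r}/m^{\underline r}$. Finally, Lemma~\ref{lem:classic-moments-2} applied with $N=nk$ balls identifies the fourth term as the joint--occupancy probability $p_r^{\mathrm{cl}}$ of the \emph{classic} model in which the same total of $nk$ balls is thrown independently. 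Thus the chain reads $p_1^{\,r}\ge p_r\ge \mu^{\underline r}/m^{\underline r}\ge p_r^{\mathrm{cl}}$.

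For the first (upper) link I would appeal to negative association. Within a single batch the urn--membership indicators form a negatively associated family (uniform sampling of a $k$--subset without replacement), distinct batches are independent, and ``urn $i$ is occupied'' is a nondecreasing function of the block of membership indicators attached to urn $i$; these blocks are disjoint for distinct urns, so negative association yields $p_r=\mathbb{P}\big[\bigcap_{i\le r}\{\text{urn }i\text{ occupied}\}\big]\le\prod_{i\le r}\mathbb{P}[\text{urn }i\text{ occupied}]=p_1^{\,r}$. For the middle link I would invoke the discrete Jensen inequality: the map $x\mapsto\binom{x}{r}$ has nonnegative second difference $\Delta^2\binom{x}{r}=\binom{x}{r-2}\ge 0$ on the nonnegative integers and is convex as a real function on $[r-1,\infty)$, and since $\mu=m(1-(1-k/m)^n)\ge k\ge r$ the mean lies in the convex regime; hence $\mathbb{E}[\binom{X}{r}]\ge\binom{\mathbb{E}X}{r}=\binom{\mu}{r}$, which is exactly $p_r\ge\mu^{\underline r}/m^{\underline r}$.

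The third (lower) link is the crux. Applying the negative--association argument of the first step to the classic model gives $p_r^{\mathrm{cl}}\le (\mu_{\mathrm{cl}}/m)^r$, where $\mu_{\mathrm{cl}}=m(1-(1-1/m)^{nk})$ is the classic mean; it therefore suffices to prove the explicit inequality
\begin{equation*}
\left(\frac{\mu_{\mathrm{cl}}}{m}\right)^{r}\le\frac{\mu^{\underline r}}{m^{\underline r}}=\prod_{l=0}^{r-1}\frac{\mu-l}{m-l}.
\end{equation*}
This is where the hypotheses genuinely enter. The inequality is false for a generic pair $\mu_{\mathrm{cl}}\le\mu$ (it degenerates to a falsehood when $\mu_{\mathrm{cl}}=\mu$, since the falling--factorial ratio is strictly below the plain power), so it must be driven by the quantitative gap between $\mu$ and $\mu_{\mathrm{cl}}$ supplied by Bernoulli's inequality $(1-1/m)^{k}\ge 1-k/m$, together with the restriction $r\le\min\{k,m-k\}$ that caps how many of the decreasing factors $\frac{\mu-l}{m-l}$ are taken. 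Taking logarithms reduces the claim to bounding $\sum_{l=0}^{r-1}\log\frac{\mu-l}{m-l}$ below by $r\log(\mu_{\mathrm{cl}}/m)$, which I would control using the concavity of $l\mapsto\log\frac{\mu-l}{m-l}$ and a refined two--sided Bernoulli estimate comparing $(1-1/m)^{nk}$ with $(1-k/m)^{n}$.

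I expect this explicit inequality to be the main obstacle. Crude estimates of the geometric mean of the factors, such as the smallest--factor bound $\big(\frac{\mu-r+1}{m-r+1}\big)^r$ or even the chord (endpoint) bound $\sqrt{\tfrac{\mu}{m}\cdot\tfrac{\mu-r+1}{m-r+1}}$, are already too lossy near the extreme $r=k=m-k$, so the argument must exploit the full decreasing profile of the factors rather than any single representative value. By contrast, the upper and middle links are routine once the probabilistic dictionary of the first paragraph is in place.
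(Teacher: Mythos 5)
Your probabilistic dictionary is correct, and the first two links of your chain are sound. The negative-association argument for $p_r \le p_1^{\,r}$ is a legitimate (and more conceptual) alternative to the paper's proof of the upper bound, which instead iterates the finite-difference recursion $\nabla^r\!\left[\binom{x}{k}^n\right]_{x=m} = \nabla^{r-1}\!\left[\binom{x}{k}^n\left(1-\left(1-\frac{k}{x}\right)^n\right)\right]_{x=m} \le \frac{\mu}{m}\,\nabla^{r-1}\!\left[\binom{x}{k}^n\right]_{x=m}$. Your middle link is exactly the paper's argument (Jensen applied to the convex falling factorial), with one small imprecision: Jensen needs the \emph{support} of $X$ to lie in the convex regime, not merely the mean; this holds because $X \ge k \ge r$ almost surely once the first batch is cast, which is the fact the paper invokes.

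The third link, however, is not just ``the main obstacle'': the route you propose for it cannot be completed, because the intermediate inequality you reduce it to, $\left(\mu_{\mathrm{cl}}/m\right)^r \le \mu^{\underline r}/m^{\underline r}$, is \emph{false} within the theorem's hypotheses. Take $m=4$, $k=2$, $n=1$, $r=2$, so that $r=\min\{k,m-k\}$. Then $\mu=2$, $\mu_{\mathrm{cl}}=4\left(1-(3/4)^2\right)=7/4$, and
\[
\left(\frac{\mu_{\mathrm{cl}}}{m}\right)^{\!2}=\frac{49}{256}\approx 0.191
\qquad\text{while}\qquad
\frac{\mu^{\underline 2}}{m^{\underline 2}}=\frac{2\cdot 1}{4\cdot 3}=\frac{1}{6}\approx 0.167,
\]
yet the theorem's right-hand inequality itself holds there, since $\nabla^2\left[x^2\right]_{x=4}/4^2 = 2/16 = 1/8 \le 1/6$. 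The failure is structural, not a matter of sharper Bernoulli estimates: replacing $p_r^{\mathrm{cl}}$ by the product of marginals $\left(\mu_{\mathrm{cl}}/m\right)^r$ discards precisely the negative correlation among the classic occupancy events, and it is that repulsion which pushes $p_r^{\mathrm{cl}}$ below $\mu^{\underline r}/m^{\underline r}$; the quantity you pass through is already too large exactly at the extreme $r=k=m-k$ that you flagged. The paper avoids this by never leaving the joint-occupancy side: it proves the rightmost inequality by induction on $r$, comparing per-step ratios --- on the left $\binom{\mu}{i+1}/\binom{m}{i+1} = \frac{\mu-i}{m-i}\,\binom{\mu}{i}/\binom{m}{i}$, and on the right the recursion $\nabla^{i+1}\left[x^{nk}\right]_{x=m} = \nabla^{i}\!\left[x^{nk}\left(1-\left(1-\frac{1}{x}\right)^{nk}\right)\right]_{x=m} \le \left(1-\left(1-\frac{1}{m}\right)^{nk}\right)\nabla^{i}\left[x^{nk}\right]_{x=m}$ --- so the classic side's decay is controlled directly, with no detour through an uncorrelated product bound. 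Any repair of your argument needs some such ratio/induction comparison (or another device retaining the joint structure of $p_r^{\mathrm{cl}}$), rather than the marginal bound.
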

\begin{proof}
    For the uppermost bound, note that for all $0 \le r <  m-k$,
        \begin{align*}
        \nabla^r \! \left[ \binom{x}{k}^n \right]_{x=m} 
            & = \nabla^{r-1} \! \left[ \binom{x}{k}^n - \binom{x-1}{k}^n \right]_{x=m} \\
            & = \nabla^{r-1} \! \left[ \binom{x}{k}^n \left( 1 - \left( 1 - \frac{k}{x} \right)^n \right) \right]_{x=m} \\
            & \le \frac{\mu}{m} \, \nabla^{r-1} \! \left[ \binom{x}{k}^n \right]_{x=m} \, ,
        \end{align*}
    since $0 \le \frac{k}{x} \le 1$ for all $m - r \le x \le m$. 
    Iterating the above inequality $r$ times gives
        \begin{align*}
        \nabla^r \! \left[ \binom{x}{k}^n \right]_{x=m} 
            \le \left( \frac{\mu}{m} \right)^r \nabla^{0} \! \left[ \binom{x}{k}^n \right]_{x=m} 
            =   \left( \frac{\mu}{m} \right)^r \binom{m}{k}^n \, ,
        \end{align*}
    i.e., the upper bound holds.
    
    For the middle inequality, let $X$ be the occupancy number of a committee random variable with $m$ urns and $n$ batches of $k$ balls to represent each committee selection.
    From \eqref{eqn:batch-factorial-moments}, the mean is $\mu = \mathbb{E}[X] = m \left( 1 - \left(1-\frac{k}{m}\right)^{n} \right)$.
    Once the first committee is chosen -- i.e., a batch is cast -- the urn occupancy is $k$ and monotonically increases as more batches are cast, which means $X \ge k \ge r$.
    The falling factorial function $x^{\underline{r}}$ is convex for $x \ge r$, so Jensen's inequality gives
        \begin{equation*}
        \frac{\nabla^r \! \left[ \binom{x}{k}^n \right]_{x=m}}{\binom{m}{k}^n} 
            = \frac{\mathbb{E}[X^{\underline{r}}]}{m^{\underline{r}}}
            \ge \frac{ \mathbb{E}[X]^{\underline{r}} }{m^{\underline{r}}} = \frac{ \binom{\mu}{r} }{ \binom{m}{r}  } \, .
        \end{equation*}

    The rightmost inequality of the hypothesis can be proved inductively.
    The initial case $r=0$ holds $\binom{\mu}{0} / \binom{m}{0} = 1 = m^{nk} / m^{nk}$.
    If the inductive hypothesis holds for $r \le i$, then
        \begin{align*}
        \frac{ \binom{\mu}{i+1} }{ \binom{m}{i+1} } 
            & = \left( \frac{\mu - i}{m - i} \right) \frac{ \binom{\mu}{i} }{ \binom{m}{i} } \\
            & = \left( 1 - \frac{m}{m-i} \left( \frac{m-k}{m} \right)^n \right) \frac{ \binom{\mu}{i} }{ \binom{m}{i} } \\
            & \ge \left( 1 - \left( 1 - \frac{k}{m} \right)^{n-1} \right) \frac{ \binom{\mu}{i} }{ \binom{m}{i} } \\
            & \ge \left( 1 - \left( 1 - \frac{1}{m} \right)^{k n} \right) \frac{ \binom{\mu}{i} }{ \binom{m}{i} } 
        \end{align*}
    which means that
        \begin{align*}
        \left( 1 - \left( 1 - \frac{1}{m} \right)^{k n} \right) \frac{ \binom{\mu}{i} }{ \binom{m}{i} }
            & \ge \left( 1 - \left( 1 - \frac{1}{m} \right)^{k n} \right) \frac{\nabla^i \! \left[ x^{n k} \right]_{x=m}}{m^{n k}} \\
            & \ge \frac{1}{m^{n k}} \nabla^i \! \left[ x^{n k} \left(1 - \left( 1 - \frac{1}{x} \right)^{n k} \right) \right]_{x=m} \\
            & = \frac{1}{m^{n k}} \nabla^{i+1} \! \left[ x^{n k} \right]_{x=m}
        \end{align*}
    and completes the proof.
\end{proof}


\subsubsection{Occupancy Statistics}
\label{subsec:estimators}

The moment formulas for the occupancy numbers can be used to estimate unknown urn model parameters. 
For example, in the capture-recapture model, the number of tagged animals can help to estimate the total population size \cite{Berg1974Factorial}.
As is discussed later in Section~\ref{sec:bloom}, the bits sum of a Bloom filter can be used to estimate the number of items it stores.


\begin{lemma}[\cite{Berg1974Factorial,Grandi2015Gamma,Grandi2018Analysis}]
    \label{lem:batch-mean-and-variance}
    Let $\mu$ and $\sigma^2$ be the mean and variance of the batch occupancy number $X$ with $m$ urns and $k$ batches of $n$ balls. 
    Then
    	\begin{equation}
        \label{eq:batch-mean} 
        \mu = m \left( 1 - \left( 1-\frac{k}{m} \right)^n \right)
        \end{equation}
    and
    	\begin{equation}
        \label{eq:batch-variance} 
        \sigma^2 = m \! \left[ 1 \! - \! \left( 1 - \frac{k}{m} \right)^{\!\!n} \right] \! \left[ 1 \! - \! m \! \left( \! 1 \! - \! \left( \! 1 \! - \! \frac{k}{m} \right)^{\!\!n} \right) \! + \! (m \! - \! 1) \! \left( \! 1 \! - \! \frac{k}{m \! - \! 1} \right)^{\!\!n} \right] \! .
    	\end{equation}
\end{lemma}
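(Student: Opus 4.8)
The plan is to handle the two assertions separately, reading the mean straight off the moment machinery already in place and reducing the variance to the first two factorial moments. The mean \eqref{eq:batch-mean} is exactly the $r=1$ case of the factorial-moment formula \eqref{eqn:batch-factorial-moments} (equivalently \eqref{eqn:batch-occupancy-mean}): since $\binom{m-1}{k}/\binom{m}{k} = (m-k)/m = 1-k/m$, the $r=1$ instance collapses to $\mathbb{E}[X]/m = 1 - (1-k/m)^n$, so no separate argument is needed there. Everything therefore rests on the variance, which I would obtain from the identity $\sigma^2 = \mathbb{E}[X^{\underline{2}}] + \mu - \mu^2$ (a rewriting of $\sigma^2 = \mathbb{E}[X^2]-\mu^2$ using $\mathbb{E}[X^2] = \mathbb{E}[X^{\underline{2}}] + \mathbb{E}[X]$).

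First I would extract the second factorial moment from the $r=2$ instance of \eqref{eqn:batch-factorial-moments}, namely $\mathbb{E}[X^{\underline 2}]/m^{\underline 2} = 1 - 2\left[\binom{m-1}{k}/\binom{m}{k}\right]^n + \left[\binom{m-2}{k}/\binom{m}{k}\right]^n$, and then simplify the two binomial ratios. The first is again $1-k/m$, while the second factors as
\[
\frac{\binom{m-2}{k}}{\binom{m}{k}} = \frac{(m-k)(m-k-1)}{m(m-1)} = \left(1-\frac{k}{m}\right)\left(1-\frac{k}{m-1}\right).
\]
Writing $a = (1-k/m)^n$ and $b = (1-k/(m-1))^n$ for brevity, this yields $\mathbb{E}[X^{\underline 2}] = m(m-1)(1 - 2a + ab)$ alongside $\mu = m(1-a)$.

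The last step is to substitute these into $\sigma^2 = \mathbb{E}[X^{\underline 2}] + \mu - \mu^2 = m(m-1)(1-2a+ab) + m(1-a) - m^2(1-a)^2$ and collect powers of $a$ together with the cross term $ab$. I expect the one real obstacle to be precisely this bookkeeping: the constant terms cancel, the terms linear in $a$ consolidate, and the remaining $a^2$ and $ab$ contributions reassemble into the product form $\sigma^2 = ma\,[\,1 - ma + (m-1)b\,]$. I would pay particular attention to the final grouping, since the outer factor should carry the per-urn empty probability $a = (1-k/m)^n$ rather than the mean $\mu = m(1-(1-k/m)^n)$; a quick numerical spot check (e.g.\ $m=3$, $k=1$, $n=2$, where $\sigma^2 = 2/9$) is worthwhile to confirm the placement before committing to the closed form.

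As an independent check and a lower-algebra alternative, I would run the same computation on the complementary empty-urn count $Y = m - X$, noting $\mathrm{Var}(X) = \mathrm{Var}(Y)$. Here $\mathbb{E}[Y] = m\,a$, and because a single batch of $k$ balls avoids two fixed urns with probability $\binom{m-2}{k}/\binom{m}{k}$ and the $n$ batches are independent, $\mathbb{E}[Y(Y-1)] = m(m-1)\,ab$ directly via \eqref{eqn:binom-moment}; feeding these into $\mathrm{Var}(Y) = \mathbb{E}[Y(Y-1)] + \mathbb{E}[Y] - \mathbb{E}[Y]^2$ reproduces the same closed form with almost no algebra and independently pins down the grouping.
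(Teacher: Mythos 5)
Your derivation is sound, and since the paper supplies no proof of this lemma at all (it is quoted from Berg and Grandi), reducing everything to the $r=1$ and $r=2$ cases of \eqref{eqn:batch-factorial-moments} is exactly the natural route. The mean is immediate, as you say. For the variance, your simplifications $\binom{m-1}{k}/\binom{m}{k}=1-\tfrac{k}{m}$ and $\binom{m-2}{k}/\binom{m}{k}=\bigl(1-\tfrac{k}{m}\bigr)\bigl(1-\tfrac{k}{m-1}\bigr)$ are correct, and with $a=(1-k/m)^n$, $b=(1-k/(m-1))^n$ the bookkeeping comes out exactly as you predict: the constants cancel, the linear terms consolidate to $ma$, and $\sigma^2 = ma + m(m-1)ab - m^2a^2 = ma\left[1-ma+(m-1)b\right]$. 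The empty-urn cross-check via $Y=m-X$ is also valid and is indeed the cleaner way to see it. (You also silently repaired the lemma's wording, which says ``$k$ batches of $n$ balls'' while its own formulas treat $k$ as the batch size and $n$ as the number of batches; your reading is the one consistent with \eqref{eqn:batch-occupancy-mean} and the rest of the paper.)

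The caution you raise about the outer factor is not a mere caution --- it is a genuine discrepancy, and you are right. The variance as printed in \eqref{eq:batch-variance}, namely $m(1-a)\left[1-m(1-a)+(m-1)b\right]$, is incorrect: at $m=3$, $k=1$, $n=2$ it evaluates to $\tfrac{5}{3}\bigl(1-\tfrac{5}{3}+\tfrac{1}{2}\bigr)=-\tfrac{5}{18}$, which cannot be a variance, whereas the true value (and the value of your formula) is $\tfrac{2}{9}$. Your closed form $\sigma^2 = m\bigl(1-\tfrac{k}{m}\bigr)^n\bigl[1-m\bigl(1-\tfrac{k}{m}\bigr)^n+(m-1)\bigl(1-\tfrac{k}{m-1}\bigr)^n\bigr]$ is also the one consistent with the paper's own Corollary~\ref{cor:classic-mean-and-variance}: setting $k=1$ and using $ab=(1-2/m)^n$ recovers \eqref{eq:classic-variance} exactly, while the printed \eqref{eq:batch-variance} does not. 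So the lemma as stated carries a transcription error --- the occupied-urn probability $1-(1-k/m)^n$ appears in two places where the empty-urn probability $(1-k/m)^n$ belongs --- and your proof establishes the corrected statement.
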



\begin{corollary}[\cite{Price1946Multinomial,Johnson2005Univariate,Grandi2018Analysis}]
    \label{cor:classic-mean-and-variance}
    Let $\mu$ and $\sigma^2$ be the mean and variance of the classical occupancy number $X$ with $m$ urns and $n$ balls. Then
    	\begin{equation}
        \label{eq:classic-mean}
        \mu = m \left( 1 - \left( 1-\frac{1}{m} \right)^n \right)
        \end{equation}
    and
    	\begin{equation}
        \label{eq:classic-variance}
        \sigma^2 = m \left( \! \left( 1 - \frac{1}{m} \right)^{\!\!n} \!\! - \left(1 - \frac{2}{m} \right)^{\!\!n} \right) - m^2 \left( \! \left( 1 - \frac{1}{m} \right)^{\!\!2n} \!\! - \left(1 - \frac{2}{m} \right)^{\!\!n} \right) .
    	\end{equation}
\end{corollary}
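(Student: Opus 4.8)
The plan is to obtain Corollary~\ref{cor:classic-mean-and-variance} as the classic ($k=1$) case of the batch statistics in Lemma~\ref{lem:batch-mean-and-variance}. The mean requires no work: putting $k=1$ in \eqref{eq:batch-mean} gives $\mu = m\bigl(1-(1-\tfrac1m)^n\bigr)$, which is exactly \eqref{eq:classic-mean}. For the variance I would not massage the batch formula but instead recompute $\sigma^2$ directly from the factorial moments already furnished by Lemma~\ref{lem:classic-moments-2}, since those are the cleanest inputs on hand and they sidestep the $(m-1)$-denominator term $(1-\tfrac{k}{m-1})^n$ that appears in the batch expression.

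By \eqref{eqn:binom-fac-moments} the classic factorial moments are $\mathbb{E}[X^{\underline r}]/m^{\underline r} = \nabla^r[x^n]_{x=m}/m^n$. The case $r=1$ returns the mean, and the case $r=2$, together with the second backward difference $\nabla^2[x^n]_{x=m} = m^n - 2(m-1)^n + (m-2)^n$, gives
\[
\mathbb{E}[X(X-1)] = m(m-1)\Bigl(1 - 2(1-\tfrac1m)^n + (1-\tfrac2m)^n\Bigr).
\]
Feeding these into the identity $\sigma^2 = \mathbb{E}[X(X-1)] + \mathbb{E}[X] - \mathbb{E}[X]^2$ reduces the entire statement to a single algebraic simplification.

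Abbreviating $a=(1-\tfrac1m)^n$ and $b=(1-\tfrac2m)^n$, the remaining task is to collapse
\[
m(m-1)(1-2a+b) + m(1-a) - m^2(1-a)^2
\]
into the target $m(a-b) - m^2(a^2-b)$ of \eqref{eq:classic-variance}. I expect this to be the main, and essentially the only, obstacle, although it is purely mechanical: the constant terms $m(m-1)+m-m^2$ vanish, the two contributions of the form $m^2a$ cancel, and what survives regroups as $ma - m^2a^2 + m(m-1)b$, which is precisely \eqref{eq:classic-variance}.

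A useful byproduct of this route is the telescoping identity $(1-\tfrac1m)^n(1-\tfrac1{m-1})^n = (\tfrac{m-2}{m})^n = b$, which is the exact bridge between the $(m-1)$-denominator form in which batch variances are naturally expressed and the $(1-\tfrac2m)^n$ form appearing in \eqref{eq:classic-variance}. I would use it as an independent cross-check that the direct computation agrees with the $k=1$ specialization of the batch variance, converting the cross term into $m(m-1)b$ as found above; no deeper argument is needed along either route.
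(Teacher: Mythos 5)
Your derivation is correct, and it takes a genuinely different (and more self-contained) route than the paper: the paper offers no derivation at all, presenting the corollary as the $k=1$ specialization of Lemma~\ref{lem:batch-mean-and-variance}, which is itself only cited. You take the mean that way, but you recompute the variance from first principles via \eqref{eqn:binom-fac-moments}: with $a=(1-\frac1m)^n$ and $b=(1-\frac2m)^n$, the second factorial moment is $\mathbb{E}[X(X-1)]=m(m-1)(1-2a+b)$, and $\sigma^2=\mathbb{E}[X(X-1)]+\mu-\mu^2$ collapses, exactly as you say, to $ma+m(m-1)b-m^2a^2=m(a-b)-m^2(a^2-b)$, which is \eqref{eq:classic-variance}. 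I checked this algebra; it is right, including the cancellation pattern you describe.

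The one thing to flag is your proposed ``independent cross-check'': it would fail, not because of your computation but because \eqref{eq:batch-variance} as printed contains an error. Take $k=1$, $m=2$, $n=2$ (two balls, two urns, so the true variance is $\frac14$): your formula and \eqref{eq:classic-variance} both give $\frac14$, while \eqref{eq:batch-variance} gives $2\cdot\frac34\left(1-\frac32\right)=-\frac34$, a negative number. The correct batch variance is $\sigma^2=mA\left[1-mA+(m-1)C\right]$ with $A=(1-\frac{k}{m})^n$ and $C=(1-\frac{k}{m-1})^n$; the printed lemma has $1-A$ in the two places where $A$ should appear. Once that is corrected, your telescoping identity $a\,(1-\frac{1}{m-1})^n=b$ does reconcile the two forms exactly as you intend, converting the cross term $m(m-1)aC$ into $m(m-1)b$. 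So your direct factorial-moment route is not merely a convenience that ``sidesteps'' the $(m-1)$-denominator term --- it is the route that actually proves the statement, and it exposes a typo in the lemma you would otherwise have leaned on.
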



\begin{corollary}[\cite{Price1946Multinomial,Swamidass2007Fingerprint}]
    \label{cor:method-of-moments}
    Let $\mu$ be the occupancy number obtained from a batch occupancy r.v. with $m$ urns and $n$ batches of $k$ balls. 
    The method of moments and ML estimators for $n$ are
        \begin{equation} 
        \label{eq:moments-n}
    	\hat{n} = \frac{\ln \left(1-\frac{\mu}{m} \right)}{\ln \left(1-\frac{k}{m} \right)} \, .
        \end{equation}
\end{corollary}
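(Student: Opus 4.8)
The plan is to treat the two estimators separately, since the method-of-moments part is pure algebra while the maximum-likelihood claim is the delicate one. For the method of moments I would equate the observed occupancy $\mu$ with the first theoretical moment supplied by Lemma~\ref{lem:batch-mean-and-variance}, namely $\mu = m\bigl(1-(1-\tfrac{k}{m})^{n}\bigr)$ from \eqref{eq:batch-mean}. Because $0<1-\tfrac{k}{m}<1$, the map $n\mapsto(1-\tfrac{k}{m})^{n}$ is strictly decreasing, so $\mu$ is strictly increasing in $n$ and the moment equation has a unique solution. Inverting it is immediate:
\begin{align*}
\frac{\mu}{m}=1-\Bigl(1-\tfrac{k}{m}\Bigr)^{n}
\;\Longrightarrow\;
\Bigl(1-\tfrac{k}{m}\Bigr)^{n}=1-\frac{\mu}{m}
\;\Longrightarrow\;
n=\frac{\ln\!\left(1-\frac{\mu}{m}\right)}{\ln\!\left(1-\frac{k}{m}\right)},
\end{align*}
which is the stated $\hat n$, well defined for $0\le \mu<m$ and $0<k<m$ (numerator and denominator are both negative).

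For the likelihood estimator I would exploit the same monotonicity through a reparametrization. Set $\theta=(1-\tfrac{k}{m})^{n}$, which is exactly the marginal probability that a fixed urn is empty after the $n$ batches are cast. Since $n\mapsto\theta$ is a smooth decreasing bijection, the invariance of the maximum-likelihood estimator under reparametrization means it suffices to find $\hat\theta$ and then set $\hat n=\ln\hat\theta/\ln(1-\tfrac{k}{m})$; substituting $\hat\theta=1-\tfrac{\mu}{m}$ reproduces the displayed formula. The task is therefore reduced to showing that the likelihood equation for $\theta$, based on the occupancy number (equivalently the empty-urn count $m-X$, which is the sufficient statistic by exchangeability), is solved by matching the observed occupancy to its expectation, i.e. by the condition $\mathbb{E}_{n}[X]=\mu$.

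The main obstacle is establishing this last coincidence rigorously, because the committee occupancy distribution is \emph{not} a power series distribution in $\theta$: the empty-urn indicators are exchangeable but genuinely dependent, so the clean identity "score $=0\iff$ mean-matching'' does not follow automatically. I would handle this in one of two ways. The clean exact route is to model the empty-urn count as $\mathrm{Binomial}(m,\theta)$ — which is the relevant model for the standard (independent-placement) filter and is a power series distribution — for which the classical property that the MLE of the success parameter equals the sample proportion gives $\hat\theta=(m-\mu)/m=1-\tfrac{\mu}{m}$ directly, matching the moment estimator (this is the content attributed to Price~\cite{Price1946Multinomial} and Swamidass~\cite{Swamidass2007Fingerprint}). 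The alternative, fully within the committee model, is to differentiate the log-likelihood $\log\mathbb{P}[X=\mu\mid n]$ in $n$ and verify that the score equation collapses to $\mathbb{E}_{n}[X]=\mu$; I expect this to be the hardest step, and I would attempt it using the finite-difference representation of the p.m.f.\ in \eqref{eq:batch-probability} together with the moment recurrences, checking that the derivative of the normalization cancels against the data term just as in the power-series case.
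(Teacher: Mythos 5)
Your method-of-moments argument coincides with the paper's entire proof, which is literally the one-line inversion of the mean formula \eqref{eq:batch-mean}: $\ln\left(1-\frac{\mu}{m}\right)=\hat n\,\ln\left(1-\frac{k}{m}\right)$; your monotonicity and well-definedness remarks are a harmless strengthening, and the paper offers nothing for the ML half of the claim beyond its citations. Your route (a) is the correct reading of those citations: treating each urn's vacancy as independent, so that the empty-urn count is $\mathrm{Binomial}(m,\theta)$ with $\theta=\left(1-\frac{k}{m}\right)^n$, the binomial MLE $\hat\theta=1-\frac{\mu}{m}$ combined with invariance of the MLE under the reparametrization $n\mapsto\theta$ yields \eqref{eq:moments-n}, which is essentially how the cited references obtain the estimator.

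However, you should drop route (b): it is not merely the hardest step, it fails. Under the exact occupancy likelihood the score equation does \emph{not} collapse to mean-matching, precisely because of the dependence among urn indicators that you flag. Concretely, take the classic case $m=3$, $k=1$, observed $\mu=2$. Then $\mathbb{P}[X=2\mid n]=6\,\stirling{n}{2}\,3^{-n}=3\left(\frac{2}{3}\right)^{n}-6\cdot 3^{-n}$, and differentiating in $n$ gives the stationarity condition $2^{n}=2\ln 3/\ln(3/2)$, i.e.\ $n\approx 2.44$, whereas \eqref{eq:moments-n} gives $\hat n=\ln(1/3)/\ln(2/3)\approx 2.71$; the likelihood at $2.71$ is strictly smaller ($\approx 0.694$ versus $\approx 0.704$ at the true stationary point). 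So the exact-likelihood MLE and the moment estimator genuinely differ, and the corollary's ``ML'' label is exact only under the independent-urn model of your route (a), being recovered asymptotically as $m\to\infty$ when the urn indicators decorrelate. Your closing plan to ``verify that the derivative of the normalization cancels against the data term just as in the power-series case'' would therefore be an attempt to prove a false identity; the proposal is sound provided you present route (a) as the proof of the ML claim and excise route (b).
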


\begin{proof}
    Given \eqref{eq:classic-mean}, $\ln \left( 1 - \frac{\mu}{m} \right) = \hat{n} \, \ln \left( 1 - \frac{k}{m} \right)$.
\end{proof}


\begin{lemma}[\cite{Berg1974Factorial,Johnson1977Urn}]
    Let $\mu$ be the occupancy number obtained from a batch occupancy r.v. with $m$ urns and $n$ batches of $k$ balls.
    The MVUE for $m \le n k$ is
        \begin{equation}
        \hat{m} = \mu \left( 1 + \frac{ \Delta^{\mu-1} \binom{x}{k}^n_{x=0} }{ \Delta^{\mu} \binom{x}{k}^n_{x=0} } \right) .
        \end{equation}
\end{lemma}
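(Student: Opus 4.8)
The plan is to exploit the fact, recorded earlier in the paper, that the batch occupancy number $X$ is a factorial series distribution (FSD) with series function $A(x) = \binom{x}{k}^n$ (the theorem identifying $X_{\cup}$ as an FSD, specialized to equal batch sizes $k_d = k$). Writing $a_i = \frac{1}{i!}\Delta^i[A(x)]_{x=0}$, the p.m.f.\ is $\mathbb{P}[X=i] = m^{\underline{i}} a_i / A(m)$, where $a_i > 0$ exactly for $k \le i \le nk$ and $a_i = 0$ otherwise. We observe the single value $X = \mu$, and the goal is to show that $\hat m$ as stated is the MVUE of $m$ on the range $m \le nk$. The argument splits into building an unbiased estimator via factorial-series matching, simplifying it by finite differences, and then upgrading ``unbiased'' to ``minimum variance'' by Lehmann--Scheff\'e.

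\emph{Constructing the unbiased estimator.} A statistic $g(X)$ is unbiased for $m$ iff $\sum_i g(i)\, a_i\, m^{\underline{i}} = m\,A(m)$ for every admissible $m$. I would expand the right-hand polynomial as a factorial series $m\,A(m) = \sum_j b_j\, m^{\underline{j}}$ with $b_j = \frac{1}{j!}\Delta^j[xA(x)]_{x=0}$. Because $xA(x)$ has degree $nk+1$, the term $b_{nk+1}\, m^{\underline{nk+1}}$ is generally present; however, on the range $m \le nk$ the falling factorial $m^{\underline{nk+1}}$ vanishes, so this top term disappears and the identity reduces to a match of the surviving factorials $m^{\underline{i}}$ for $k \le i \le nk$. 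Linear independence of these falling factorials then forces $g(i)\,a_i = b_i$, i.e.\ $g(i) = \Delta^i[xA(x)]_{x=0}\big/\Delta^i[A(x)]_{x=0}$. This is exactly where the hypothesis $m \le nk$ is needed for the estimator to exist.

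\emph{Simplifying the numerator.} Applying the discrete Leibniz rule to the product $x \cdot A(x)$ and using $\Delta x = 1$, $\Delta^2 x = 0$, only the $j=0$ and $j=1$ terms survive, giving $\Delta^i[xA(x)]_{x=0} = i\,\Delta^{i-1}[A(x)]_{x=1}$. The one-step telescoping identity $\Delta^{i-1}[A]_{x=1} = \Delta^{i-1}[A]_{x=0} + \Delta^{i}[A]_{x=0}$ converts this into $g(i) = i\left(1 + \Delta^{i-1}[A]_{x=0}\big/\Delta^{i}[A]_{x=0}\right)$; substituting $A(x) = \binom{x}{k}^n$ and $i = \mu$ yields precisely the claimed $\hat m$.

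\emph{Minimum variance.} With a single observation $X$ is trivially sufficient, so by Lehmann--Scheff\'e it suffices to show the family is complete in $m$: that $\sum_i h(i)\, a_i\, m^{\underline{i}} = 0$ for all admissible $m \le nk$ forces $h \equiv 0$ on the support. The left side is a polynomial $P(m)$ of degree at most $nk$. It vanishes automatically at $m = 0, 1, \dots, k-1$ (every surviving term carries $m^{\underline{i}}$ with $i \ge k > m$) and, by hypothesis, at $m = k, k+1, \dots, nk$; together these are $nk+1$ distinct roots, so $P \equiv 0$ and hence $h(i)\,a_i = 0$ for every $i$. Thus $\hat m = g(X)$ is the unique unbiased function of $X$, the MVUE. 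I expect the main obstacle to be exactly this degree-versus-range bookkeeping: the single condition $m \le nk$ both annihilates the spurious $(nk+1)$-st factorial term (so the estimator is unbiased) and supplies the last roots needed for completeness, while the finite-difference algebra producing the closed form is routine.
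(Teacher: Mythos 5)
Your proof is correct, but there is nothing in the paper to compare it against: the paper states this lemma by citation (Berg 1974, Johnson--Kotz 1977) and gives no proof of its own. What you have written is a faithful reconstruction of Berg's factorial-series-distribution argument, and every step checks out. In the $a_i = \frac{1}{i!}\Delta^i[A(x)]_{x=0}$ notation your estimator is $\hat m(\mu) = \mu + a_{\mu-1}/a_{\mu}$, and the identity $m\,m^{\underline{i}} = i\,m^{\underline{i}} + m^{\underline{i+1}}$ shows that $\mathbb{E}_m[\hat m(X)] = m - a_{nk}\,m^{\underline{nk+1}}/A(m)$, so the sole obstruction to unbiasedness is the top term $a_{nk}\,m^{\underline{nk+1}}$, which vanishes exactly when the integer $m$ satisfies $m \le nk$ --- precisely your degree-versus-range bookkeeping. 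The discrete Leibniz step $\Delta^i[xA(x)]_{x=0} = i\,\Delta^{i-1}[A]_{x=1}$ and the shift $\Delta^{i-1}[A]_{x=1} = \Delta^{i-1}[A]_{x=0} + \Delta^{i}[A]_{x=0}$ are both right, and the completeness root count is sound: $P(m)$ has degree at most $nk$, vanishes automatically at $m = 0,\dots,k-1$ (both because $m^{\underline{i}}=0$ for $i \ge k > m$ and because $A(m)=\binom{m}{k}^n=0$ there) and by hypothesis on $\{k,\dots,nk\}$, giving $nk+1$ roots. One phrasing to tighten: agreement of the two sides at finitely many values of $m$ does not by itself ``force'' coefficient matching, since the right side has degree $nk+1$; what saves you is that $m^{\underline{nk+1}}$ vanishes at exactly those $nk+1$ interpolation points, so the left side must coincide with the unique degree-$\le nk$ interpolant $\sum_{j \le nk} b_j\, m^{\underline{j}}$, and then linear independence applies (alternatively, your completeness step already delivers uniqueness). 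As a pleasant consistency check you did not mention: specializing $k=1$ gives $\hat m = \mu + \stirling{n}{\mu-1}\big/\stirling{n}{\mu}$, and the recurrence $\stirling{n+1}{\mu} = \mu \stirling{n}{\mu} + \stirling{n}{\mu-1}$ shows this coincides with both branches of the paper's subsequent corollary for the classic occupancy case.
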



\begin{corollary}[\cite{Berg1974Factorial,Charalambides2005Combinatorial}]
    Let $\mu$ be the occupancy number obtained from a classic occupancy r.v. with $n$ balls and $m$ urns. The MVUE for $m$ is 
        \begin{equation}
        \hat{m} = \begin{cases}
            \mu + \frac{\stirling{n}{\mu-1}}{\stirling{n}{\mu}} & \mbox{for } m > n \\
            \frac{ \stirling{n+1}{\mu} }{ \stirling{n}{\mu} } & \mbox{for } m \le n 
        \end{cases} \, .
        \end{equation}
\end{corollary}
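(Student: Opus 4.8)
The plan is to treat this as the $k=1$ specialization of the preceding batch MVUE lemma, reinforced by a direct Stirling-number computation, with the minimum-variance property delivered by the Lehmann--Scheff\'e theorem. Before anything else I would record that the two displayed expressions denote the \emph{same} number: the recurrence $\stirling{n+1}{\mu}=\mu\stirling{n}{\mu}+\stirling{n}{\mu-1}$, divided through by $\stirling{n}{\mu}$, gives
\[
\frac{\stirling{n+1}{\mu}}{\stirling{n}{\mu}}=\mu+\frac{\stirling{n}{\mu-1}}{\stirling{n}{\mu}},
\]
so the two rows agree pointwise; the case split is best read as recording the two source derivations (and the two natural closed forms) rather than two distinct estimators.

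Next I would establish unbiasedness. Writing the classic p.m.f. as $\mathbb{P}[X=i]=\stirling{n}{i}\,m^{\underline{i}}/m^n$ and taking $\hat m(i)=\stirling{n+1}{i}/\stirling{n}{i}$, the $\stirling{n}{i}$ factors cancel and
\[
\mathbb{E}\!\left[\hat m(X)\right]=\frac{1}{m^n}\sum_i \stirling{n+1}{i}\,m^{\underline{i}}=\frac{m^{n+1}}{m^n}=m,
\]
where the middle sum is evaluated by the Stirling identity $\sum_i\stirling{n+1}{i}z^{\underline{i}}=z^{n+1}$, i.e.\ the $r=0$ instance of Corollary~\ref{cor:stirling-identity}. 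The delicate point, which is where the regime hypothesis genuinely enters, is that $X$ is supported on $i\le\min\{m,n\}$; since $m^{\underline{i}}=0$ for $i>m$, every term of the full identity with $i>m$ drops out, and the truncated sum reproduces $m^{n+1}$ exactly when the top index $n+1$ exceeds $m$, that is, when $m\le n$. This is precisely the range in which the estimator is truly unbiased, and it coincides with the hypothesis $m\le nk$ of the batch lemma specialized to $k=1$ via $\Delta^{\mu}[x^n]_{x=0}=\mu!\,\stirling{n}{\mu}$, which reproduces the first closed form.

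For minimum variance I would invoke Lehmann--Scheff\'e. The single observation $X$ is trivially sufficient for $m$, and I would prove completeness by the usual linear-algebra argument: if $\mathbb{E}_m[g(X)]=0$ for all admissible $m$, then $\sum_i g(i)\stirling{n}{i}\,m^{\underline{i}}=0$, and because the falling factorials $m^{\underline{i}}$ are linearly independent — the associated coefficient system is triangular, as $m^{\underline{i}}=0$ for $i>m$ — each coefficient must vanish, forcing $g(i)\stirling{n}{i}=0$ and hence $g\equiv 0$ on the support, where $\stirling{n}{i}>0$. The unbiased function $\hat m(X)$ is therefore the unique MVUE.

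I expect the main obstacle to be the boundary bookkeeping in the second step: tracking exactly which terms of the Stirling identity survive the truncation to $i\le\min\{m,n\}$, and thereby isolating the regime in which unbiasedness (and hence the MVUE claim) actually holds — and reconciling that regime with both the two-case statement and the $m\le nk$ hypothesis of the batch lemma. Once the triangular structure of the falling-factorial system is noted, the completeness verification and the equivalence of the two closed forms are routine.
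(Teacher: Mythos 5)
Your proof is correct, and it supplies more than the paper does: the corollary is stated with citations to Berg and Charalambides and carries no proof, the intended in-paper derivation being precisely the $k=1$ specialization of the preceding batch MVUE lemma via $\Delta^{\mu}[x^n]_{x=0}=\mu!\,\stirling{n}{\mu}$, which you carry out and which reproduces the first closed form. Beyond that, you independently verify the pieces the paper delegates to the literature — unbiasedness by cancelling $\stirling{n}{i}$ against the p.m.f.\ and invoking $\sum_i \stirling{n+1}{i}\,m^{\underline{i}}=m^{n+1}$, completeness via the triangular falling-factorial system, and Lehmann--Scheff\'e — and each step checks out. Your two structural observations are also correct and genuinely clarifying: the recurrence $\stirling{n+1}{\mu}=\mu\stirling{n}{\mu}+\stirling{n}{\mu-1}$ shows the two displayed rows are the \emph{same} number wherever $\stirling{n}{\mu}\neq 0$, so the case split carries no mathematical content; and your truncation bookkeeping correctly isolates $m\le n$ as the only regime where unbiasedness holds — for $m>n$ the omitted term $\stirling{n+1}{n+1}\,m^{\underline{n+1}}=m^{\underline{n+1}}>0$ makes the estimator biased low, and a degree count (the unbiasedness condition $\sum_{i\le n} g(i)\stirling{n}{i}\,m^{\underline{i}}=m^{n+1}$ pits a polynomial of degree at most $n$ against one of degree $n+1$) shows no unbiased estimator of $m$ exists once the parameter space contains infinitely many values $m>n$. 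This is consistent with the explicit restriction $m\le nk$ in the batch lemma, and it means the corollary's ``$m>n$'' row cannot literally be an MVUE there; your reading of the case split as an artifact of the two cited sources, with the result genuinely holding on $m\le n$, is the right resolution rather than a gap in your argument.
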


\section{Bloom Filters}
\label{sec:bloom}

Bloom~\cite{Bloom1970Space} provides a method for using a hash function $H: D \rightarrow [m]^k$ to generate a set membership filter for $S = \{x_1$, $x_2$, $\dots$, $x_n \} \subseteq D$ in the following way.
A Bloom filter is initialized as an $m$-bit word (or array) of all zeros
    \begin{equation*}
    B = b_1 \, b_2 \dots b_m = 0 \, 0 \dots 0 
    \end{equation*}
and the set $S$ is encoded into the filter by computing $k$ bit positions of $B$ for each $x_i \in S$ using a hash function
    \begin{equation*}
    H(x_i) = \left\{ h_1(x_i), \, h_2(x_i), \dots, h_k(x_i) \right\}
    \end{equation*}
and setting $b_d = 1$, if $h_j (x_i) = d$ for some $j \in [k]$ and $x_i \in S$.
Any $y \in D$ can be tested for membership in $S$ by checking that all the hash bits
    $$b_{h_1(y)} = b_{h_2(y)} = \cdots = b_{h_k(y)} = 1 \, .$$
If $b_{h_j(y)}=0$ for any $j \in [k]$, then the membership test guarantees that $y \not\in S$.
A positive result is inconclusive, since the membership check for a Bloom filter can yield a false-positive result (see Fig~\ref{fig:bloom-filter}).

	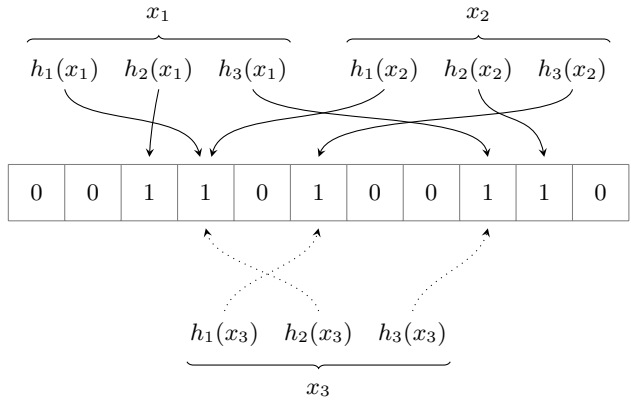
\begin{figure}[ht]
    	\centering
		\begin{tikzpicture}

	\draw[step=0.75,gray,thin] (0,0) grid (8.25,0.75);

	\node at (0.375,0.375) {0};
	\node at (1.125,0.375) {0};
	\node at (1.875,0.375) {1};
	\node at (2.625,0.375) {1};
	\node at (3.375,0.375) {0};
	\node at (4.125,0.375) {1};
	\node at (4.875,0.375) {0};
	\node at (5.625,0.375) {0};
	\node at (6.375,0.375) {1};
	\node at (7.125,0.375) {1};
	\node at (7.875,0.375) {0};

	\node at (2.0,2.75) {$x_1$};
	\draw [decoration={brace}, decorate] (0.25,2.375) -- (3.75,2.375);
    
	\node at (0.75,2.0) {$h_1(x_1)$};
    \draw[>=stealth,->,thin] (0.75,1.75) .. controls (0.875,1.25) and (2.375,1.5) .. (2.55,0.85);

	\node at (2.0,2.0)  {$h_2(x_1)$};
    \draw[>=stealth,->,thin] (2.00,1.75) .. controls (1.95,1.5) and (1.9,1.25) .. (1.875,0.85);

	\node at (3.25,2.0) {$h_3(x_1)$};
    \draw[>=stealth,->,thin] (3.25,1.75) .. controls (3.5,1.25) and (6.25,1.5) .. (6.375,0.85);

	\node at (6.25,2.75) {$x_2$};
	\draw [decoration={brace}, decorate] (4.5,2.375) -- (8.0,2.375);

	\node at (5.0,2.0)  {$h_1(x_2)$};
    \draw[>=stealth,->,thin] (5.0,1.75) .. controls (4.5,1.25) and (2.875,1.5) .. (2.7,0.85);

	\node at (6.25,2.0) {$h_2(x_2)$};
    \draw[>=stealth,->,thin] (6.25,1.75) .. controls (6.375,1.25) and (7.0,1.5) .. (7.125,0.85);

	\node at (7.5,2.0)  {$h_3(x_2)$};
    \draw[>=stealth,->,thin] (7.5,1.75) .. controls (7.375,1.25) and (4.375,1.5) .. (4.125,0.85);

	\node at (4.125,-2.25) {$x_3$};
	\draw [decoration={brace}, decorate] (5.875,-1.875) -- (2.375,-1.875);

	\node at (2.875,-1.5) {$h_1(x_3)$};
    \draw[>=stealth,->,thin,dotted] (2.875,-1.2) .. controls (3.0,-0.75) and (4.0,-0.6) .. (4.125,-0.1);

	\node at (4.125,-1.5) {$h_2(x_3)$};
    \draw[>=stealth,->,thin,dotted] (4.125,-1.2) .. controls (4.0,-0.75) and (2.75,-0.6) .. (2.625,-0.1);

	\node at (5.375,-1.5) {$h_3(x_3)$};
    \draw[>=stealth,->,thin,dotted] (5.375,-1.2) .. controls (5.5,-0.75) and (6.25,-0.6) .. (6.375,-0.1);

\end{tikzpicture}
		\caption{A Bloom filter $B$ of length $m=11$ encoding $S=\{x_1,x_2\}$ with $H=\{h_1,h_2,h_3\}$, i.e., $n=|S|=2$ and $k=|H|=3$. %
        The membership check for $x_3 \in D$ yields a false-positive result.}
    	\label{fig:bloom-filter}
	\end{figure}

Intuitively, the false-positive rate for a filter can be decreased by either storing fewer items $n$ or extending the filter length $m$.
Increasing the number of hash bits per item $k$ initially decreases the false-positive rate for Bloom filter with fixed $m$ and $n$ before logistically increasing to 1; see Fig.~\ref{fig:fp-vs-k}.
Exact formulas for the false-positive rates of classic and standard Bloom filters in terms of $m$, $n$, and $k$ are provided in Section~\ref{subsec:false-positive}.

Bloom's method for filter construction \cite{Bloom1970Space} uses a hash function $H:D \rightarrow \binom{[m]}{k}$ that outputs $k$ distinct integers belonging to $[m]$.
A variant of Bloom's method that uses $k$ independent hash functions $h_1,\dots,h_k$ where $h_i:D \rightarrow [m]$ to define $H$ has become the de facto approximate set membership filter, and is often misattributed to Bloom. 
See Section~\ref{sec:historical-notes} for further historical details.
In \cite{Grandi2018Analysis}, the approximate set membership filters that encode items using $H_C:D \rightarrow \binom{[m]}{k}$ are called \textit{classic Bloom filters} and filters that use $H_S:D \rightarrow [m]^k$ are called \textit{standard Bloom filters}. 
We adopt this notation, but note that both the terms ``classic'' \cite{Christensen2010new,Fan2014Cuckoo,Peng2018Persistent} and ``standard'' \cite{Bonomi06dLeft,Kirsch2008Less,Peng2018Persistent,Weaver2018XorSat} are often used interchangeably to describe the standard Bloom filter.

The dynamics of inserting items from a random set $S$ into a $m$-bit Bloom filter can be simulated with an urn model for the committee union problem with $nk$ balls and $m$ urns.
A classic Bloom filter uses $H$ to generate $k$ distinct bit positions for each item, which corresponds to a committees of size $k$ (batches of $k$ balls). 
For a standard Bloom filter, the $k$ hashes from $H$ are independent so the $n$ items of $S$ generate a total of $nk$ independent balls corresponding to $nk$ committees with a single member.

The bit value of the filter $B$ at each position $1 \le i \le m$ corresponds to the event $\chi_i(\omega)$, and the bit sum $|B| = \sum_{i=1}^m b_i$ is the occupancy number $X$.
Thus, both classic and standard Bloom filters can be modeled as committee union problems; see Table~\ref{tab:bloom-as-occupancy} for a side-by-side comparison.

    \begin{table}[t]
        \centering
        \begin{tabular}{|l|c|c|c|}
        \cline{3-4}
        \multicolumn{2}{c|}{ }           & \multicolumn{2}{c|}{Bloom filter} \\
        \cline{3-4}
        \multicolumn{2}{c|}{ }       & Classic   & Standard    \\
        \hline
        \multirow{6}{*}{\begin{turn}{90}Urn Model\end{turn}} 
            & \multirow{2}{*}{Balls}    & \multirow{2}{*}{$n$}  & \multirow{2}{*}{$1$}      \\
            &                           &                       &                           \\
            & \multirow{2}{*}{Batches}  & \multirow{2}{*}{$k$}  & \multirow{2}{*}{$nk$}     \\
            &                           &                       &                           \\
            & \multirow{2}{*}{Urns}     & \multirow{2}{*}{$m$}  & \multirow{2}{*}{$m$}      \\
            &                           &                       &                           \\
        \hline
        \multirow{6}{*}{\begin{turn}{90}Occupancy\end{turn}}
            & \multirow{3}{*}{ $\displaystyle \mathbb{P}(X=i)$ }  
            & \multirow{3}{*}{ $\displaystyle \binom{m}{i} \frac{\Delta^i \binom{x}{k}^{n}_{x=0}}{\binom{m}{k}^n}$ } 
            & \multirow{3}{*}{ $\displaystyle \stirling{nk}{i} \frac{m^{\underline{i}}}{m^{nk}}$ }      \\
            &                           &                       &                           \\
            &                           &                       &                           \\
            & \multirow{3}{*}{ $\displaystyle \mathbb{E}(X)$ }
            & \multirow{3}{*}{ $\displaystyle m \left( 1 - \left(1 - \frac{k}{m} \right)^{n} \right)$ }
            & \multirow{3}{*}{ $\displaystyle m \left( 1 - \left( 1 - \frac{1}{m} \right)^{n k} \right)$ }    \\
            &                           &                       &                           \\
            &                           &                       &                           \\
        \hline
        \end{tabular}
        \caption{Classic and standard Bloom filters modeled as committee occupancy problems.}
        \label{tab:bloom-as-occupancy}
    \end{table}

For a standard Bloom filter, the probability of encoding a collision-free code word is
    \begin{equation*}
    \mathbb{P}_S \left( X=k \mid n=1 \right) = \frac{ m^{\underline{k}} }{ m^k } \quad \mbox{and} \quad \lim_{m \rightarrow \infty} \frac{m^{\underline{k}}}{m^k} = 1 \, .
    \end{equation*}
Hence for fixed $k$ and increasing $m \gg k$, the asymptotic behavior of the standard Bloom filter is equivalent to the classic Bloom filter.


\subsection{Union and Intersection of Bloom Filters}
\label{sec:union-intersection-bloom-filters}

In some constrained or distributed applications, it is more practical to generate a collection of Bloom filters in parallel and aggregate the results than it is to serially construct a single global Bloom filter.
For example, a group of data storage centers could generate individual Bloom filters for the list of file IDs of the documents stored at each location.
When a user requests a file from a data center, the server can check each Bloom filter for the requested file ID to find the data center that hosts the document.
The individual Bloom filters can be combined to form a master Bloom filter for all documents, or the intersection of the filters can be analyzed to detect file duplication across the data centers.

Let $B_1$, $B_2$, \dots, $B_c$ be $m$-bit Bloom filters that use a common hash function $H$ to encode the sets $S_1, S_2,\dots, S_c$, respectively.
A new filter $B_{\lor}$ for $\bigcup_{i=1}^c S_i$ can be constructed directly from $B_1,B_2,\dots,B_c$ by computing the bitwise \texttt{OR} operation for each of the $m$ positions in the filter; see Fig~\ref{fig:bloom-int-union}.
The resulting $B_{\lor}$ can be modeled as a committee problem with the same $m$ and $k$ as the starting filters and $n =\sum_{i=1}^c n_i$ aggregate items.
Any $x$ that tests positive for a $B_i$ also tests positive for $B_{\lor}$.
Unfortunately, this means $B_{\lor}$ preserves all the false-positive results from the input filters and potentially introduces new false-positives that would register as true-negatives for each of the original filters. 
Therefore construction of $B_{\lor}$ is lossy form of compression.
    
    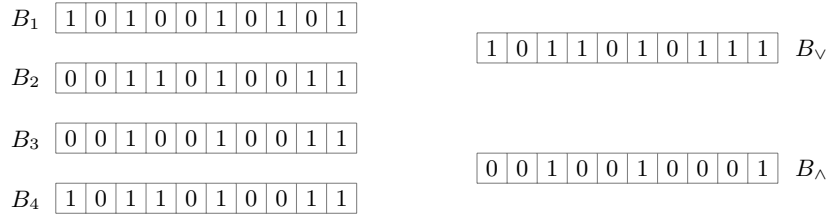
\begin{figure}[t]
        \centering
    	\begin{tikzpicture}

	\node at (-0.4,2.579) {$B_1$};
	\draw[step=0.4,gray,thin] (0,2.399) grid (4.0,2.8);
	\node at (0.2,2.6) {1};
	\node at (0.6,2.6) {0};
	\node at (1.0,2.6) {1};
	\node at (1.4,2.6) {0};
	\node at (1.8,2.6) {0};
	\node at (2.2,2.6) {1};
	\node at (2.6,2.6) {0};
	\node at (3.0,2.6) {1};
	\node at (3.4,2.6) {0};
	\node at (3.8,2.6) {1};

	\node at (-0.4,1.8) {$B_2$};
	\draw[step=0.4,gray,thin] (0,1.599) grid (4.0,2.0);
	\node at (0.2,1.8) {0};
	\node at (0.6,1.8) {0};
	\node at (1.0,1.8) {1};
	\node at (1.4,1.8) {1};
	\node at (1.8,1.8) {0};
	\node at (2.2,1.8) {1};
	\node at (2.6,1.8) {0};
	\node at (3.0,1.8) {0};
	\node at (3.4,1.8) {1};
	\node at (3.8,1.8) {1};

	\node at (-0.4,0.979) {$B_3$};
	\draw[step=0.4,gray,thin] (0,0.799) grid (4.0,1.2);
	\node at (0.2,1.0) {0};
	\node at (0.6,1.0) {0};
	\node at (1.0,1.0) {1};
	\node at (1.4,1.0) {0};
	\node at (1.8,1.0) {0};
	\node at (2.2,1.0) {1};
	\node at (2.6,1.0) {0};
	\node at (3.0,1.0) {0};
	\node at (3.4,1.0) {1};
	\node at (3.8,1.0) {1};

	\node at (-0.4,0.18) {$B_4$};
	\draw[step=0.4,gray,thin] (0,0) grid (4.0,0.4);
	\node at (0.2,0.2) {1};
	\node at (0.6,0.2) {0};
	\node at (1.0,0.2) {1};
	\node at (1.4,0.2) {1};
	\node at (1.8,0.2) {0};
	\node at (2.2,0.2) {1};
	\node at (2.6,0.2) {0};
	\node at (3.0,0.2) {0};
	\node at (3.4,0.2) {1};
	\node at (3.8,0.2) {1};

	\node at (10.05,2.18) {$B_{\lor}$};
	\draw[step=0.4,gray,thin] (5.599,1.999) grid (9.6,2.4);
	\node at (5.8,2.2) {1};
	\node at (6.2,2.2) {0};
	\node at (6.6,2.2) {1};
	\node at (7.0,2.2) {1};
	\node at (7.4,2.2) {0};
	\node at (7.8,2.2) {1};
	\node at (8.2,2.2) {0};
	\node at (8.6,2.2) {1};
	\node at (9.0,2.2) {1};
	\node at (9.4,2.2) {1};

	\node at (10.05,0.58) {$B_{\land}$};
	\draw[step=0.4,gray,thin] (5.599,0.399) grid (9.6,0.8);
	\node at (5.8,0.6) {0};
	\node at (6.2,0.6) {0};
	\node at (6.6,0.6) {1};
	\node at (7.0,0.6) {0};
	\node at (7.4,0.6) {0};
	\node at (7.8,0.6) {1};
	\node at (8.2,0.6) {0};
	\node at (8.6,0.6) {0};
	\node at (9.0,0.6) {0};
	\node at (9.4,0.6) {1};

\end{tikzpicture}
        \caption{Unions and intersections of Bloom filters are Bloom filters and can be modeled as committee occupancy problems.}
        \label{fig:bloom-int-union}
    \end{figure}

Similarly, intersecting Bloom filters generates another aggregate Bloom filter which can be modeled as a committee intersection problem.


\begin{corollary}
    \label{cor:filter-intersection}
    Let $B_1, B_2, \dots, B_c$ be $m$-bit standard Bloom filters that encode $n_1, n_2, \dots, n_c$ items, respectively, using $k$ hash functions. If $|B_{\land}|$ is the bit sum of $B_{\land}$, then
        \begin{equation} \label{eqn:expected_intersection}
        \mathbb{E}\left[ \, | B_{\land} | \, \right] = \frac{1}{m^{n k -1}} \, \prod\limits_{i=1}^c m^{n_i k} - (m-1)^{n_i k} \qquad \mbox{and}
        \end{equation}
        \begin{align} \label{eqn:var_intersection}
        \mathbb{V}\!\mbox{ar}\left[ \, | B_{\land} | \, \right] = \, & \frac{1}{m^{n k - 1}} \left( (-1)^c \prod\limits_{i=1}^c \left( m^{n_i k} - (m-1)^{n_i k} \right) \right. \\
        \notag & \, + \left. (m - 1) \prod\limits_{i=1}^c \left( m^{n_i \, k} - 2(m-1)^{n_i k} + (m-2)^{n_i k} \right) \right) \\
        \notag & \, + \frac{1}{m^{2(n k -1)}} \left( \, \prod\limits_{i=1}^c \left( m^{n_i k} - (m-1)^{n_i k} \right)^2 \right)
        \end{align}
    where $n = \sum_{i=1}^c n_i$.
\end{corollary}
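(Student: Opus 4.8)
The plan is to recognize $|B_{\land}|$ as the committee-intersection occupancy number $X_{\cap}$ and then read the mean and variance off its binomial moments. Since each $B_i$ is a \emph{standard} filter, inserting its $n_i$ items with $k$ independent hash functions amounts to casting $n_i k$ balls one at a time into $m$ urns, which is precisely the classic occupancy model for color $i$. A bit of $B_{\land}$ equals $1$ exactly when the corresponding urn is occupied by \emph{every} color, so $|B_{\land}|=X_{\cap}$ is the general committee-intersection r.v. of Theorem~\ref{thm:mutual-batch-occupy-moment} specialised to committees of size $k_d=1$ with $n_d=n_i k$ single-ball batches. In this regime $\binom{x}{k_d}/\binom{m}{k_d}=x/m$, so the binomial moment~\eqref{eq:mutual-batch-expected} collapses to $\mathbb{E}[\binom{X_{\cap}}{r}]=\binom{m}{r}\prod_{i=1}^c \nabla^r[x^{n_i k}]_{x=m}/m^{n_i k}$, and Lemma~\ref{lem:classic-moments-2} supplies $\nabla^r[x^{N}]_{x=m}=\sum_{j=0}^r(-1)^j\binom{r}{j}(m-j)^N$.

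First I would extract the mean from the case $r=1$. Here $\nabla[x^{N}]_{x=m}=m^{N}-(m-1)^{N}$, so $\mathbb{E}[X_{\cap}]=m\prod_{i=1}^c (m^{n_i k}-(m-1)^{n_i k})/m^{n_i k}$; collecting the denominators with $\sum_i n_i k = nk$ and pulling out a single factor of $m$ yields exactly~\eqref{eqn:expected_intersection}. Equivalently, this is a one-line consequence of~\eqref{eqn:binom-moment}: by independence across colors the chance a fixed urn is hit in every filter factorises as $\prod_i(1-((m-1)/m)^{n_i k})$.

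For the variance I would use $\mathbb{V}\mathrm{ar}[X_{\cap}]=2\,\mathbb{E}[\binom{X_{\cap}}{2}]+\mathbb{E}[X_{\cap}]-\mathbb{E}[X_{\cap}]^2$. The second binomial moment comes from $r=2$, where $\nabla^2[x^N]_{x=m}=m^N-2(m-1)^N+(m-2)^N$, giving $\mathbb{E}[\binom{X_{\cap}}{2}]=\binom{m}{2}\prod_i(m^{n_i k}-2(m-1)^{n_i k}+(m-2)^{n_i k})/m^{n_i k}$; combinatorially this is the probability that two fixed urns are simultaneously occupied by all colors, obtained by a two-variable inclusion--exclusion within each filter followed by the product over colors. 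Substituting the three ingredients and clearing denominators to the common powers $m^{nk-1}$ and $m^{2(nk-1)}$ produces the three product-terms in~\eqref{eqn:var_intersection}. The main obstacle is purely bookkeeping: one must track the factors of $m$ and the signs carefully when the cross-term $-\mathbb{E}[X_{\cap}]^2$ is merged with the factorial-moment contributions, and it is worth sanity-checking the result against the single-filter case $c=1$, where $X_{\cap}$ degenerates to a classic occupancy r.v. and the formula must reduce to the variance in Corollary~\ref{cor:classic-mean-and-variance}.
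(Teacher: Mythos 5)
Your proposal is correct and takes essentially the same route as the paper: the paper's proof is a one-line citation of Theorem~\ref{thm:mutual-batch-occupy-moment} and Corollary~\ref{cor:classic-mean-and-variance}, and you supply precisely the intended details (each standard filter is a classic occupancy model, i.e., $n_i k$ batches of size $k_d=1$, independence across colors gives the product form of the binomial moments, and $r=1,2$ yield the mean and variance).

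One caveat about your final step, which your own suggested $c=1$ sanity check would catch: substituting into $\operatorname{Var}[X_{\cap}]=2\,\mathbb{E}\bigl[\binom{X_{\cap}}{2}\bigr]+\mathbb{E}[X_{\cap}]-\mathbb{E}[X_{\cap}]^2$ does \emph{not} reproduce \eqref{eqn:var_intersection} verbatim; it yields
\begin{align*}
\operatorname{Var}\bigl[\,|B_{\land}|\,\bigr]
&= \frac{1}{m^{nk-1}}\biggl(\,\prod_{i=1}^c\bigl(m^{n_ik}-(m-1)^{n_ik}\bigr)\\
&\qquad +(m-1)\prod_{i=1}^c\bigl(m^{n_ik}-2(m-1)^{n_ik}+(m-2)^{n_ik}\bigr)\biggr)\\
&\qquad -\frac{1}{m^{2(nk-1)}}\,\prod_{i=1}^c\bigl(m^{n_ik}-(m-1)^{n_ik}\bigr)^2 \, ,
\end{align*}
i.e., no factor $(-1)^c$ on the first product (that factor is correct only if the factors are written in the reversed form $(m-1)^{n_ik}-m^{n_ik}$) and a \emph{minus} sign on the squared term, which is just $-\mathbb{E}[X_{\cap}]^2$. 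Concretely, for $c=1$, $m=2$, $k=1$, $n_1=2$ the printed display gives $7/4$ while the true variance, and Corollary~\ref{cor:classic-mean-and-variance} with $nk=2$ balls, give $1/4$; the corrected display above gives $1/4$. So your derivation is sound and matches the paper's method, but you should assert the corrected signs rather than claim that the bookkeeping lands exactly on \eqref{eqn:var_intersection} as printed.
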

\begin{proof}
    Equations \eqref{eqn:expected_intersection} and \eqref{eqn:var_intersection} follow directly from Theorem~\ref{thm:mutual-batch-occupy-moment} and Lemma~\ref{cor:classic-mean-and-variance}.
\end{proof}

Corollary~\ref{cor:filter-intersection} and the estimators of Section~\ref{subsec:estimators} can be used to quantify the statistical similarity between a collection of Bloom filters, so caution should be exercised when using Bloom filters to store either personally identifiable information (PII) or sensitive personal information (SPI).


\subsection{False-Positive Rates}
\label{subsec:false-positive}

The false-positive rate for a filter is the probability that a randomly selected $x \in D$ passes the membership check for $S$ but $x \not\in S$. 
Conventionally, it is assumed in false-positive analysis that $|S| \ll |D|$ so a positive membership check is almost surely a false-positive result.
Hence, the false-positive rate reduces to the probability of sampling a positive result.
For a Bloom filter, this is the probability that the $k$ hash bits for $x$ are already marked in the filter for $S$:
    \begin{equation}
    f(B) = \mathbb{P}\left[ b_{h_1(x)} = b_{h_2(x)} = \dots = b_{h_k(x)} =  1\right] \, .
    \end{equation}
The expected false positive rates for standard and classic Bloom filters with identical parameters are different which sections \ref{subsec:standard-false-positive} and \ref{subsec:classic-false-positive} elucidate.


\subsubsection{Standard Bloom Filters}
\label{subsec:standard-false-positive}

The hash function $H:D \rightarrow [m]^k$ for a standard Bloom filter is constructed using $k$ independent hash functions $h_i : D \rightarrow [m]$ for $1 \le i \le k$, so
    \begin{equation}
    \mathbb{P}\left[ b_{h_1(x)} = b_{h_2(x)} = \dots = b_{h_k(x)} =  1\right] = \mathbb{P}\left[ b_{h_1(x)} =  1 \right]^k = \left(\frac{|B_S|}{m}\right)^k
    \end{equation}
where $|B_S|$ is the number of bits set to one in the $m$-bit standard Bloom filter $B_S$. 
The number of 1-bits $|B_S|$ corresponds to the occupancy number $X_S$ in the classic occupancy model, so the expected false positive rate is
    \begin{equation} 
    \label{eq:bloom-false-positive}
    f_S(m,n,k) := \mathbb{E}[f(B_S)]  
        = \sum\limits_{i=1}^m \, \mathbb{P}[X_S = i] \left( \frac{i}{m} \right)^k
        = \frac{\mathbb{E}[X_{S}^{k}]}{m^k}
    \end{equation}
and can be computed from the $k$\textsuperscript{th} ordinary moment of the classic occupancy distribution.
Formula \eqref{eq:classic-moments-1} gives
    \begin{equation}
    \label{eq:standard-fp-in-m}
    f_S(m,n,k) = \frac{1}{m^{(n+1)k}} \sum\limits_{i=1}^m \stirling{nk}{i} \, m^{\underline{i}} \; i^k \, ,
    \end{equation}
which agrees with Bose et al. \cite{Bose2008false}.
Grandi~\cite{Grandi2018Analysis} gives the recursive formula
    \begin{equation}
    \label{eqn:standard-bloom-recurrence}
    \psi_S(h,m) =  \psi_S(h-1,m) - \left(1-\frac{1}{m}\right)^{kn+h+1} \psi_S(h-1,m-1)
    \end{equation}
where $f_S(m,n,k) = \psi_S(k,m)$, which is a corollary of Theorem~\ref{thm:recursive-moments}
    \begin{align*}
    \mathbb{E}\left[\left(\frac{X}{m}\right)^{\! k} \mid m, nk \right] & = \mathbb{E} \left[ \left( \frac{X}{m} \right)^{k+1} \mid m, nk \right] \\
    & \qquad + \left(1 - \frac{1}{m} \right)^{nk + k} \mathbb{E}\left[ \left(\frac{X}{m-1} \right)^k \mid m-1, nk \right] \, .
    \end{align*}

Several authors \cite{Bose2008false,Christensen2010new,Grandi2018Analysis} argue that the sum \eqref{eq:standard-fp-in-m} is intractable for practical parameters.
In many Bloom filter applications, $m$ is large and the sum involves many high precision fractional summands -- which quickly overflows the memory of most modern computers.
However, Lemma~\ref{lem:classic-moments-2} can be used to express the false-positive rate as a sum in terms of $k$, which is generally much smaller than $m$.


\begin{theorem} 
    \label{thm:new-false-positive}
    For $m,n,k \in \mathbb{N}$, the expected false-positive rate of a standard Bloom filter is
        \begin{equation} 
        \label{eqn:standard-bloom-fp}
        f_S(m,n,k) = \frac{1}{m^{(n+1)k}} \sum\limits_{i=0}^k \stirling{k}{i} \, m^{\underline{i}} \, \sum\limits_{j=0}^i (-1)^j \binom{i}{j} (m-j)^{nk} \, .
        \end{equation}
\end{theorem}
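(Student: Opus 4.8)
The plan is to express the false-positive rate directly through the $k$th raw moment of the classic occupancy distribution and then invoke the finite-difference moment formula already established. By equation \eqref{eq:bloom-false-positive}, the expected false-positive rate satisfies $f_S(m,n,k) = \mathbb{E}[X_S^k]/m^k$, where $X_S$ is the occupancy number of the classic occupancy model with $m$ urns and $nk$ balls (as recorded in Table~\ref{tab:bloom-as-occupancy}, since the $nk$ independent hash bits of a standard filter correspond to $nk$ single-member committees, i.e.\ $nk$ individually tossed balls). Thus the task reduces to computing $\mathbb{E}[X_S^k]$ as a sum whose length is controlled by $k$ rather than $m$.

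First I would apply the moment formula \eqref{eqn:classic-moments-2} of Lemma~\ref{lem:classic-moments-2} with $r=k$ and the ball count set to $nk$, which gives
$$\mathbb{E}[X_S^k] = \sum_{i=1}^k \stirling{k}{i} \frac{m^{\underline{i}}}{m^{nk}} \, \nabla^i [x^{nk}]_{x=m}.$$
Next I would substitute the explicit expansion of the backward difference recorded in the first identity of Lemma~\ref{lem:classic-moments-2}, namely
$$\nabla^i [x^{nk}]_{x=m} = \sum_{j=0}^i (-1)^j \binom{i}{j} (m-j)^{nk},$$
which is simply the inclusion--exclusion count for the probability that $i$ prescribed urns remain empty after $nk$ tosses.

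Dividing by $m^k$ and collecting the powers of $m$ then produces the prefactor $1/m^{(n+1)k}$ together with the stated double sum. Since $\stirling{k}{0}=0$ for $k \ge 1$, the outer summation index may be extended to begin at $i=0$ without altering the value, which matches the form in the statement verbatim. The argument is essentially a single substitution chain, so I anticipate no genuine obstacle; the only point demanding care is the bookkeeping that distinguishes the ball parameter $nk$ from the filter length $m$, and the exponent arithmetic in reconciling $m^{-k}\cdot m^{-nk}$ with $m^{-(n+1)k}$.
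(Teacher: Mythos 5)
Your proposal is correct and follows essentially the same route as the paper: both reduce $f_S$ to $\mathbb{E}[X_S^k]/m^k$ via \eqref{eq:bloom-false-positive}, apply Lemma~\ref{lem:classic-moments-2} with $r=k$ and $nk$ balls, and expand $\nabla^i[x^{nk}]_{x=m}$ by inclusion--exclusion before collecting the powers of $m$. Your added remarks on extending the sum to $i=0$ via $\stirling{k}{0}=0$ and on the exponent bookkeeping are correct but minor polish on the paper's two-line argument.
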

\begin{proof}
    From \eqref{eq:bloom-false-positive} and Lemma~\ref{lem:classic-moments-2},
        \begin{equation*}
        \label{eq:fp-occupy-moments}
        f_S(m,n,k) = \frac{\mathbb{E}[X_S^k]}{m^k} = \frac{1}{m^{k}} \sum\limits_{i=0}^k \stirling{k}{i} \, m^{\underline{i}} \, \frac{ \nabla^{i} m^{nk} }{ m^{nk} } \, .
        \end{equation*}
    and $\nabla^i m^{nk} = \left. \nabla^i x^{nk} \right|_{x=m} = \sum_{j=0}^i (-1)^j \binom{i}{j} (m-j)^{nk}$.
\end{proof}

Since the false-positive rate can be expressed a function of expectation -- i.e., $f_S = \mathbb{E}\left[ \varphi(X) \right]$ where $\varphi(X) = \left(\frac{X}{m}\right)^k$ -- its partial Taylor series gives the approximation
    \begin{equation*}
    \label{eq:taylors-series-estimate}
    \mathbb{E}\left[ \varphi(X_S) \right] = \mathbb{E}\left[ \sum\limits_{i=0}^{\infty} \frac{\varphi^{i}(\mu_S)}{i!} \, (X_S-\mu_S)^i\right] 
     \approx \varphi\left(\mu_S\right) + \frac{\sigma^2_S}{2} \, \varphi^{\prime\prime}\left( \mu_S \right)
    \end{equation*}
where $\mu_S = \mathbb{E}[X_S]$, which is a close approximation \cite{Grandi2018Analysis} and can readily be computed using \eqref{cor:classic-mean-and-variance} with $nk$ balls.
Jensen's inequality shows that the the well-known constant approximations
    \begin{equation}
    \label{eqn:standard-fp-lower-bound}
    M(m,n,k) = \left(1-\left( 1 - \frac{1}{m} \right)^{k n} \right)^k
    \end{equation}
\cite{Severance1976Differential,Bose2008false,Broder2003Applications,Christensen2010new}, and 
    \begin{equation}
    \label{eqn:exp-fp-lower-bound}
    E(m,n,k) = \left( 1 - e^{-kn/m} \right)^k
    \end{equation}
\cite{Knuth1973Art} are a lower bounds for $f_S$, since
    \begin{equation*}
    E(m,n,k) \le M(m,n,k) = \varphi(\mu_S) \le \mathbb{E}[\varphi(X_S)] = f(m,n,k) \, .
    \end{equation*}
Bloom's approximation \cite{Bloom1970Space} for the classic filter $\left( 1 - \left( 1 - \frac{k}{m} \right)^n \right)^k$ turns out to be an upper bound for both the classic and standard Bloom filter; see Corollary~\ref{cor:fp-upper-lower-bounds}.


\subsubsection{Classic Bloom Filters}
\label{subsec:classic-false-positive}

    \begin{figure}
        \centering
    	\begin{overpic}[scale=0.75]{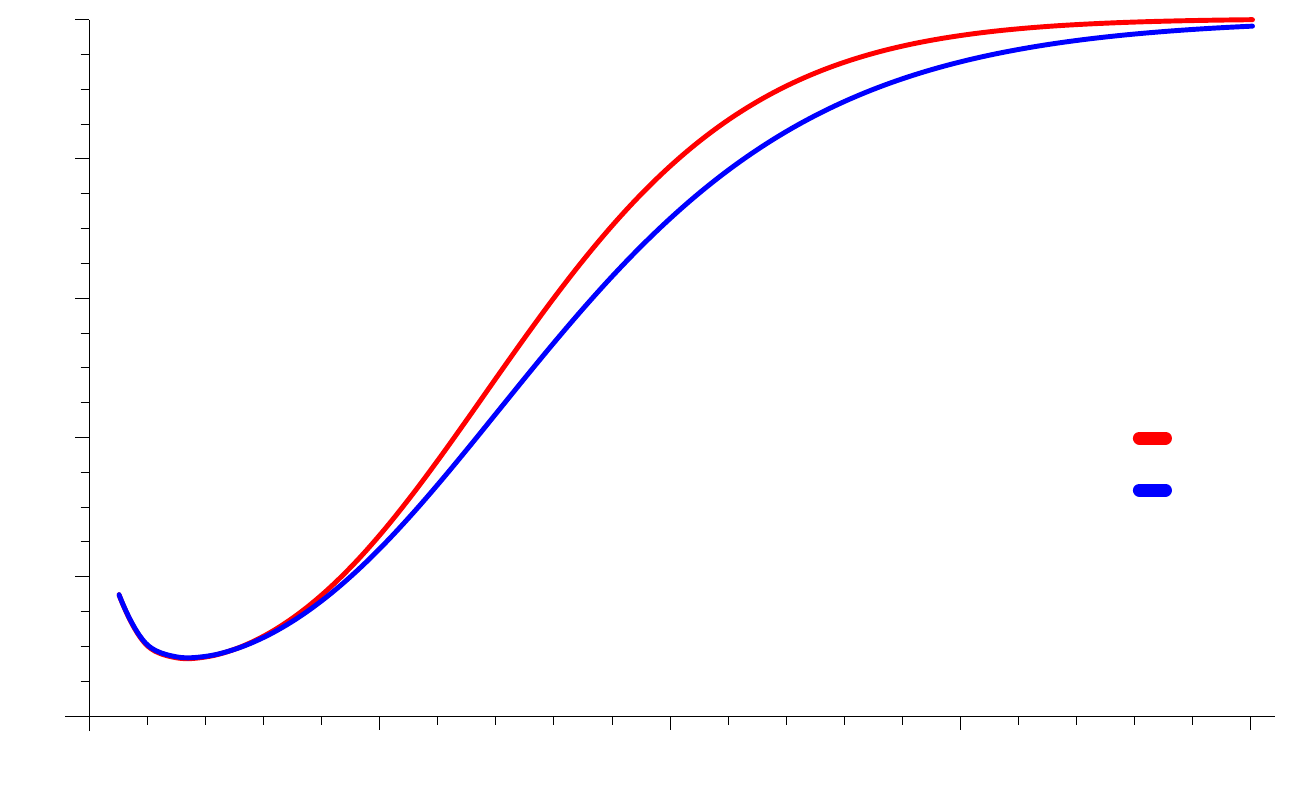}
    		\put(2,166){1.0}
    		\put(2,136){0.8}
        	\put(2,105){0.6}
        	\put(2,75){0.4}
        	\put(2,45){0.2}
        	\put(0,15){0.0}
        	\put(17,5){0}
        	\put(78,5){10}
        	\put(140,5){20}
        	\put(203,5){30}
        	\put(266,5){40}
            \put(280,15){$k$}
    		\put(257,75){$f_C$}
    		\put(257,64){$f_S$}
        \end{overpic}
        \caption{False-positive rates for classic (red) and standard (blue) Bloom filters of length $m=100$ storing $n=20$ items using $k=1,\dots,40$ hash bits per item.}
        \label{fig:fp-vs-k}
    \end{figure}

The false positive rate for an $m$-bit classic Bloom filter generated by marking $k$ bits per item is
    \begin{equation*}
    \mathbb{P}\left[ b_{h_1(x)} = b_{h_2(x)} = \dots = b_{h_k(x)} =  1\right] = \frac{\binom{|B_C|}{k}}{\binom{m}{k}} .
    \end{equation*}
The bit sum $|B_C|$ for a classic Bloom filter $B_C$ follows a committee distribution, so the expected false-positive rate is
    \begin{equation}
    \label{eqn:classic-fp-factorial-moments}
    f_C(m,n,k) := \mathbb{E}[f(B_C)]
        = \mathbb{E}\left[ \frac{ \binom{X_C}{k} }{ \binom{m}{k} } \right] 
        = \frac{ \mathbb{E}[\binom{X_C}{k} ] }{ \binom{m}{k} } \, .
    \end{equation}
From \eqref{eqn:batch-factorial-moments} and Theorem~\ref{thm:psi-formulas}, this can be rewritten as
    \begin{align}
    \label{eqn:classic-bloom-fp}
    f_C (m,n,k) & = \sum\limits_{i=0}^{k} (-1)^i \binom{k}{i} \left[ \frac{\binom{m-i}{k}}{\binom{m}{k}} \right]^n \\
    & = \frac{ \nabla^k \left[ \binom{x}{k}^n \right]_{x=m} }{ \binom{m}{k}^n } = \rho_{\vec{k}}(k,m) \\
        & = \pFq{n+1}{n}{-k,k-m,\dots,k-m}{-m,\dots,-m}{1} \, .
    \end{align}
Grandi~\cite{Grandi2018Analysis} gives the recursive definition
    \begin{equation}
    \label{eqn:classic-bloom-recurrence}
    \psi_C(h,m) = \psi_C(h-1,m) - \left(1 - \frac{k}{m} \right)^{n} \psi_C(h-1,m-1)
    \end{equation}
such that $f_C(m,n,k)=\psi_C(m,k)$, which is a consequence of Theorem~\ref{thm:psi-formulas}.

Both the false-positive rate formulas \eqref{eqn:standard-bloom-fp} and \eqref{eqn:classic-bloom-fp} can be difficult to compute, but the following corollary provides some simple bounds.


\begin{corollary}
    \label{cor:fp-upper-lower-bounds}
    For all $m,n \in \mathbb{N}$ and $1\le k \le \frac{m-1}{2}$, 
        \begin{equation*}
        L(m,n,k) \le f_S(m,n,k), f_C(m,n,k) \le U(m,n,k)
        \end{equation*}
    where
        \begin{align}
        \label{eqn:fp-lower-bound}
        L(m,n,k) = \frac{\nabla^k \left[ x^{nk} \right]_{x=m}}{m^{n k}} \ge \frac{\binom{\mu_S}{k}}{\binom{m}{k}} \qquad \mbox{and} \\
        \label{eqn:fp-upper-bound}
        U(m,n,k) = \left( 1 - \left( 1 - \frac{k}{m} \right)^n \right)^k = \left( \frac{\mu_C}{m} \right)^k
        \end{align}
    such that $\mu_S = m \left( 1 - \left( 1 - \frac{1}{m} \right)^{n k} \right)$ and $\mu_C = m \left( 1 - \left( 1 - \frac{k}{m} \right)^n \right)$.
\end{corollary}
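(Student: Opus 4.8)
The plan is to route both inequalities through the four-term chain of Theorem~\ref{thm:batch-occupancy-bounds}, whose parameter $\mu$ is exactly $\mu_C = m\bigl(1-(1-\tfrac{k}{m})^n\bigr)$. Because $k \le \tfrac{m-1}{2}$ forces $k \le m-k$, we have $\min\{k,m-k\}=k$, so that chain is available at every order $0 \le r \le k$. Writing $\beta_r := \tfrac{\nabla^r[x^{nk}]_{x=m}}{m^{nk}}$ for the $r$-th binomial-moment ratio of the classic occupancy number $X_S$ with $nk$ balls (Lemma~\ref{lem:classic-moments-2} gives $\beta_r = \mathbb{E}[\binom{X_S}{r}]/\binom{m}{r}$), the defined quantity is $L=\beta_k$, and the rightmost inequality of Theorem~\ref{thm:batch-occupancy-bounds} reads $\beta_r \le \binom{\mu_C}{r}/\binom{m}{r}$.

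The classic filter is immediate. By \eqref{eqn:classic-bloom-fp}, the rate $f_C = \tfrac{\nabla^k[\binom{x}{k}^n]_{x=m}}{\binom{m}{k}^n}$ is precisely the second term of the chain at $r=k$, so reading off the two ends gives $f_C \le (\mu_C/m)^k = U$ and $f_C \ge \binom{\mu_C}{k}/\binom{m}{k} \ge \beta_k = L$.

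The standard upper bound $f_S \le U$ is the crux, and I expect it to be the main obstacle: a direct Jensen estimate fails here, since the $k$ query positions are positively correlated and $\mathbb{E}[(X_S/m)^k] \ge (\mu_S/m)^k$ pushes the wrong way, with $\mu_S \le \mu_C$. The resolution is to expand $f_S$ in binomial moments and spend the gap between $\mu_C$ and $\mu_S$ termwise. Starting from the expansion in the proof of Theorem~\ref{thm:new-false-positive},
\[
f_S = \frac{1}{m^k}\sum_{i=0}^{k} \stirling{k}{i}\, m^{\underline{i}}\, \beta_i ,
\]
I apply $\beta_i \le \binom{\mu_C}{i}/\binom{m}{i}$ to each summand (legal since $i \le k = \min\{k,m-k\}$), which converts $m^{\underline{i}}\beta_i$ into $\mu_C^{\underline{i}}$; the Stirling numbers $\stirling{k}{i}$ and the factors $m^{\underline{i}}$ are nonnegative, so the bound survives summation. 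The Stirling identity $x^k = \sum_{i} \stirling{k}{i} x^{\underline{i}}$ (the $r=0$ case of Corollary~\ref{cor:stirling-identity}) then collapses the result to $\mu_C^k$, yielding $f_S \le \mu_C^k/m^k = U$. The point to verify carefully is that the bound is invoked at $\mu=\mu_C$, not $\mu_S$, which is exactly what absorbs the correlation.

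For the standard lower bound I compare falling and ordinary powers pointwise. Since $\tfrac{x^{\underline{k}}}{x^k} = \prod_{j=0}^{k-1}(1-\tfrac{j}{x})$ is nondecreasing on $[k,m]$ and $i^{\underline{k}}=0$ for integers $i<k$, one gets $\tfrac{i^{\underline{k}}}{m^{\underline{k}}} \le \tfrac{i^k}{m^k}$ for all $0\le i\le m$; taking expectations over $X_S$ and using \eqref{eq:bloom-false-positive} gives $L = \tfrac{\mathbb{E}[X_S^{\underline{k}}]}{m^{\underline{k}}} \le \tfrac{\mathbb{E}[X_S^k]}{m^k} = f_S$. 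Finally, the auxiliary estimate $L \ge \binom{\mu_S}{k}/\binom{m}{k}$ follows from Jensen's inequality for the convex map $x\mapsto \binom{x}{k}$ on the support of $X_S$, together with $L = \mathbb{E}[\binom{X_S}{k}]/\binom{m}{k}$ and $\mathbb{E}[X_S]=\mu_S$, completing the sandwich $L \le f_S,\,f_C \le U$.
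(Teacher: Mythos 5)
Your proposal is correct, and for three of the four inequalities it is the paper's own argument. The classic-filter bounds are read directly off the chain of Theorem~\ref{thm:batch-occupancy-bounds} at $r=k$ with $\mu=\mu_C$ (your observation that $k\le\tfrac{m-1}{2}$ forces $k\le m-k$ is exactly what licenses this), and your standard upper bound is the paper's proof run in reverse: the paper expands $U=\mu_C^k/m^k=\tfrac{1}{m^k}\sum_i\stirling{k}{i}\,\mu_C^{\underline{i}}$ and bounds each $\mu_C^{\underline{i}}$ below by $m^{\underline{i}}\beta_i$, while you expand $f_S=\tfrac{1}{m^k}\sum_i\stirling{k}{i}\,m^{\underline{i}}\beta_i$ and bound each term above by $\stirling{k}{i}\,\mu_C^{\underline{i}}$ --- the same termwise invocation of Theorem~\ref{thm:batch-occupancy-bounds} at $\mu_C$ followed by the same Stirling collapse, and you correctly identify why Jensen at $\mu_S$ cannot work there. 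The one genuinely different step is the standard lower bound $f_S\ge L$. The paper obtains it from Jensen, $f_S\ge(\mu_S/m)^k$, combined with the recursion $\nabla^i[x^{nk}]_{x=m}/m^{nk}\le\left(1-\left(1-\tfrac1m\right)^{nk}\right)\nabla^{i-1}[x^{nk}]_{x=m}/m^{nk}$, which iterates to $L\le(\mu_S/m)^k$; you instead prove the pointwise inequality $i^{\underline{k}}/m^{\underline{k}}\le i^k/m^k$ for all integers $0\le i\le m$ (monotonicity of $x^{\underline{k}}/x^k$ on $[k,m]$, plus $i^{\underline{k}}=0$ for integers $i<k$) and take expectations against the law of $X_S$. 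Your version is more elementary --- no Jensen, no recursion --- while the paper's version buys the intermediate sandwich $L\le M(m,n,k)=(\mu_S/m)^k\le f_S$, i.e., it locates the estimate \eqref{eqn:standard-fp-lower-bound} between the two bounds, as depicted in Fig.~\ref{fig:fp-bounds}.

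One caveat you share with the paper: justifying the auxiliary claim $L\ge\binom{\mu_S}{k}/\binom{m}{k}$ by Jensen requires the convex extension of $x\mapsto\binom{x}{k}$ that vanishes for $x\le k-1$, since the polynomial $x^{\underline{k}}/k!$ is not convex below $k-1$ and the support of $X_S$ reaches below $k$; Jensen applied to that extension returns the polynomial value $\binom{\mu_S}{k}$ only when $\mu_S\ge k-1$, a corner case (e.g., $n=1$ with $k$ near $m/2$) that neither your write-up nor the paper's one-line remark addresses.
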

\begin{proof}
    Theorem~\ref{thm:batch-occupancy-bounds} shows that the upper and lower bounds hold for the classic Bloom filter.
    
    Fix $\mu = \mu_C = m \left( 1 - \left( 1 - \frac{k}{m} \right)^n \right)$. For the standard Bloom filter, 
        \begin{align*}
        U(m,n,k) & = \frac{\mu^k}{m^k} = \frac{1}{m^k} \sum\limits_{i=0}^k \stirling{k}{i} \mu^{\underline{i}},
        \end{align*}
    which, by Theorem~\ref{thm:batch-occupancy-bounds}, gives
        \begin{align*}
        \frac{ \mu^{\underline{i}} }{ m^{ \underline{i} }} 
            = \frac{ \binom{\mu}{i} }{ \binom{m}{i} } 
            \ge \frac{\nabla^i \! \left[ x^{n k} \right]_{x=m}}{m^{n k}}, 
        \quad \mbox{i.e.,} \quad 
        \mu^{\underline{i}} \ge m^{\underline{i}} \frac{\nabla^i \! \left[ x^{n k} \right]_{x=m}}{m^{n k}} \, .
        \end{align*}
    Hence, $U(m,n,k) \ge \sum_{i=0}^k \stirling{k}{i} \frac{m^{\underline{i}}}{m^k} \frac{\nabla^i [x^{n k}]_{x=m} }{m^{n k}} = f_S(m,n,k)$.
    
    The lower bound follows from Jensen's inequality:
        \begin{align*}
        f_S(m,n,k) = \frac{\mathbb{E}\left[X_S^k\right]}{m^k} 
            \ge \frac{\mathbb{E}\left[X_S \right]^k}{m^k} 
            = \left( 1 - \left( 1 - \frac{1}{m} \right)^{n k} \right)^k \, ,
        \end{align*}
    and -- just as in Theorem~\ref{thm:batch-occupancy-bounds} -- recursion gives
        \begin{align*}
        \frac{\nabla^{i} \, \left[ x^{n k} \right]_{x=m}}{m^{n k}} 
            & = \frac{1}{m^{n k}} \nabla^{i-1} \! \left[x^{n k} - (x-1)^{n k} \right]_{x=m} \\
            & = \frac{1}{m^{n k}} \nabla^{i-1} \! \left[x^{n k} \left( 1  - \left( 1-\frac{1}{x} \right)^{n k} \right) \right]_{x=m} \\
            & \le \left( 1 - \left( 1 - \frac{1}{m} \right)^{nk} \right) \frac{\nabla^{i-1} \left[ x^{n k}\right]_{ x=m }}{ m^{n k} } \, ,
        \end{align*}
    so $f_S(m,n,k) \ge \frac{\nabla^k \left[ x^{n k} \right]_{x=m}}{ m^{n k} } = \frac{\mathbb{E}[X_S^{\underline{k}}]}{m^{\underline{k}}}$.
    The simpler bound $\binom{\mu_S}{k} / \binom{m}{k}$ can be obtained by applying Jensen's inequality to the factorial moments. 
\end{proof}

    \begin{figure}
        \centering
    	\begin{overpic}[scale=0.33]{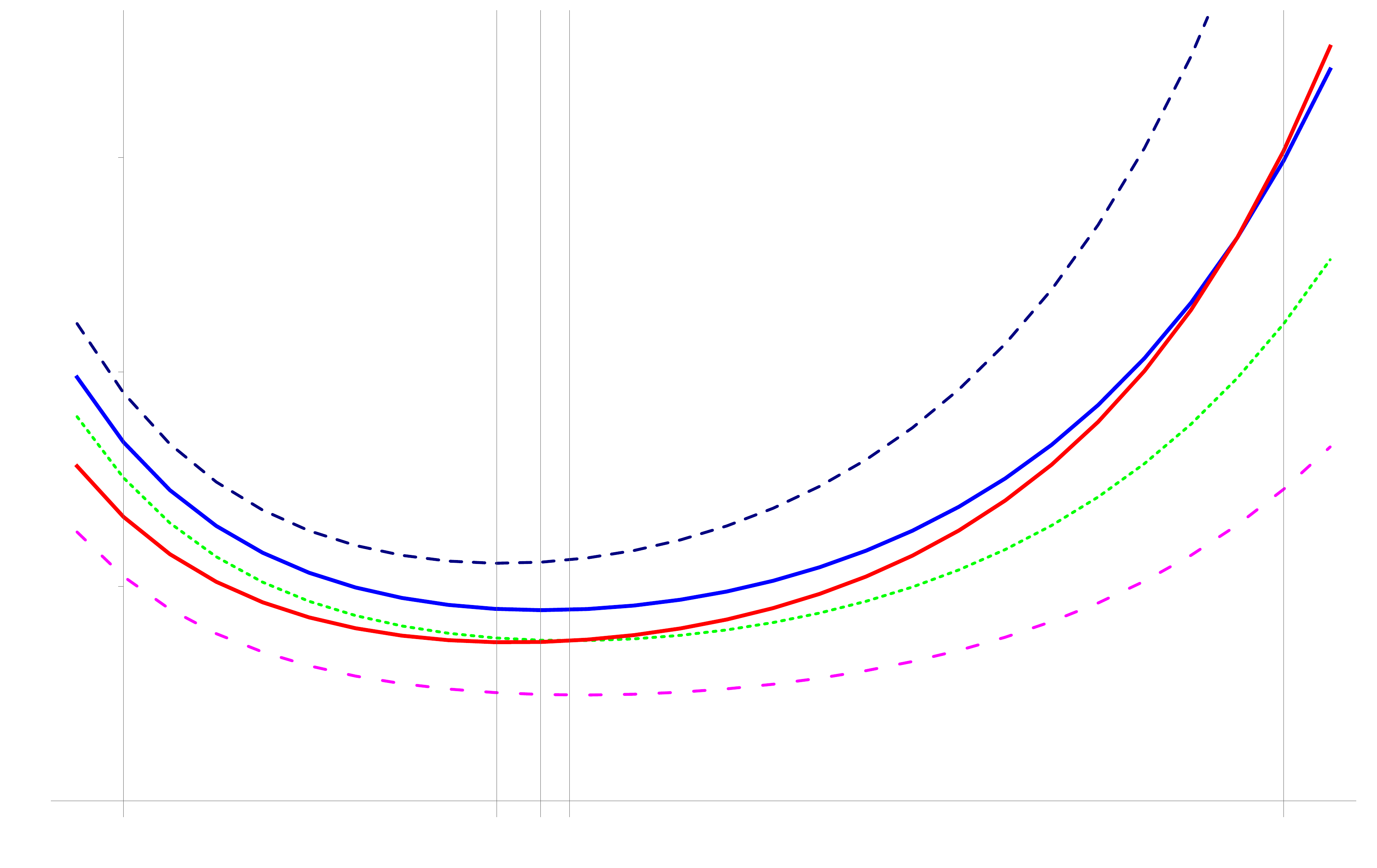}
    		\put(-18,173){$1.5\!\times\!10^{-10}$}
        	\put(5,13.5){0}
        	\put(25,2){$\frac{m}{2n}$}
        	\put(114,2){$k^*_C$}
        	\put(127,2){$k^*_S$}
        	\put(139,2){$k^*$}
        	\put(309,2){$\frac{m}{n}$}
            \put(334,13.5){$k$}
    		\put(9,136){$U$}
    		\put(6,123){$f_S$}
    		\put(6,110){$M$}
    		\put(6,99){$f_C$}
    		\put(8,82){$L$}
        \end{overpic}
        \caption{Schematic of the upper $(U)$, middle $(M)$, and lower $(L)$ bounds for the false-positive rates of classic $(f_C)$ and standard $(f_S)$ Bloom filters using the parameters: $m=1000$, $n=20$, and $k=25,\dots,50$. The optimal number of hash bits $k_C^*=33$ and $k_S^*=34$ and the estimate $k^* = \frac{m}{n} \ln 2 \approx 34.7$ are marked.}
        \label{fig:fp-bounds}
    \end{figure}


\subsection{Historical Notes}
\label{sec:historical-notes}

Bloom filters are an application of randomized superimposed codes~\cite{Roberts1979Partial}, which played a central role in the Zatocoding retrieval systems \cite{Mooers1947Zatocoding} developed during the 1940s and 50s.
In 1970, Bloom \cite{Bloom1970Space} described the classic Bloom filter algorithm for obtaining an approximate answer to the set membership problem using a fixed-length superimposed code, and estimated the false-positive rate based on the expected bit occupancy as $U(m,n,k)$ in \eqref{eqn:fp-upper-bound}.
The novelty of Bloom's method lies in the allowance of errors to increase efficiency in time and space.
Roberts~\cite{Roberts1979Partial} reported the exact formula \eqref{eqn:classic-bloom-fp} for the classic Bloom filter in 1979.

The standard Bloom filter and false-positive estimate $E(m,n,k)$ in \eqref{eqn:exp-fp-lower-bound} likely originated in \cite{Knuth1973Art} as misreporting of Bloom's classic construction \cite{Bloom1970Space}.
Knuth remarks that the section detailing Bloom's method was written in haste during the April of 1972 \cite{Knuth1972Bloom,Knuth1973Art}.
Possible reasons for the continued misattribution of the standard Bloom filter could include:
1) the similarities between the filter constructions, 
2) the fact that the false-positive rate approximations and asymptotics coincide for large filters, and 
3) \textit{The Art of Computer Programming} \cite{Knuth1973Art} is a seminal work and often used as a primary reference.
In 1976, Severance and Lohman \cite{Severance1976Differential} provide the false-positive approximation $M(m,n,k)$ in \eqref{eqn:standard-fp-lower-bound} and list \cite{Bloom1970Space} and \cite{Knuth1973Art} as references but only cite Bloom.

Bose et al.~\cite{Bose2008false} derived the exact formula \eqref{eq:standard-fp-in-m} for the standard Bloom filter in 2008, and credit the theoretical and empirical discrepancies observed by \cite{Gremillion1982Designing,Mullin1983Second} to a mistake in Bloom's analysis while failing to note the difference between the classic and standard filter constructions.
Bloom's original exposition \cite{Bloom1970Space} is nearly correct, but he computes the false-positive rate of the expected filter $f(\mathbb{E}[B])$ rather than the expected false-positive rate of a filter $\mathbb{E}[f(B)]$.

Grandi~\cite{Grandi2018Analysis} provided the first side-by-side analysis of the false-positive rates for classic and standard Bloom filters in 2018, almost 50 years after their inventions.
As fate would have it, Sections~\ref{sec:filter-optimization} and \ref{sec:filter-efficiency} show that Bloom's original construction is more efficient than the better known filter bearring his name.


\subsection{False-Positive Rate Minimization}
\label{sec:filter-optimization}

The false-positive rate of a (classic or standard) Bloom filter can be decreased by
\begin{enumerate}
    \item increasing the filter size $m$,
    \item decreasing the number of stored items $n$, or
    \item optimizing the number of hash bits $k$ used to encode each item.
\end{enumerate}
For a fixed $m=\dot{m}$ and $n=\dot{n}$, the false positive rate using an optimal number of hash bits $k^*$ satisfies 
    \begin{equation*}
    f^*(\dot{m},\dot{n}) := f(\dot{m},\dot{n},k^*) \le f(\dot{m},\dot{n},k) \, \, \mbox{ for all } k \in \mathbb{N} \, .
    \end{equation*}
If the false-positive rate $p=\dot{p}$ and $m=\dot{m}$ are fixed, then the maximum $n_{max}$ can be found such that
    \begin{equation*}
    f^*(\dot{m},n) > \dot{p} \, \, \mbox{ for all } n \ge n_{max} \, .
    \end{equation*}
Likewise, if $p=\dot{p}$ and $n=\dot{n}$ are fixed, the minimum $m_{min}$ can be computed such that
    \begin{equation*}
    f^*(m,\dot{n}) > \dot{p} \, \, \mbox{ for all } m \le m_{min} \, .
    \end{equation*}
Calculating $k^*$, $n_{max}$, or $m_{min}$ involves computing the filter false-positive rate, and the approximation $E(m,n,k)$ from \eqref{eqn:exp-fp-lower-bound} is often used in optimization derivations, since
    \begin{equation}
    \label{eqn:fp_optimal_approx}
    f(m,n,k) \approx f_S(m,n,k) \approx (1-e^{-nk/m})^k
    \end{equation}
and the exact formulas \eqref{eqn:standard-bloom-fp} and \eqref{eqn:classic-bloom-fp} for standard and classical Bloom filters are complicated. 
In particular, for fixed $n=\dot{n}$ and $m=\dot{m}$:
    \begin{equation}
    \label{eqn:approx_optimal_k}
    k^* = \min_{k\in[\dot{m}]} f(\dot{m},\dot{n},k) \approx \frac{\dot{m}}{\dot{n}} \ln 2
    \end{equation}
which can be verified by checking that $\frac{d}{dk}\left[ (1-e^{-nk/m})^k \right] = 0$ at $k=\frac{m}{n} \ln 2$.
Noting that the false-positive rate is minimized at $k^*$, approximations for $n_{max}$ and $m_{min}$ for a given false-positive rate $p=\dot{p}$ can be obtained through
    \begin{equation*}
    \hat{p} = f(\dot{m},\dot{n},k^*) \approx (1-e^{\dot{n} \, {k^*} / \dot{m}})^{k^*} = 2^{ - \frac{\dot{m}}{\dot{n}} \ln 2}
    \end{equation*}
which implies that $\frac{\dot{m}}{\dot{n}} \approx -\frac{\log_2 \dot{p}}{\ln 2}$, so
    \begin{equation}
    \label{eqn:approx_optimal_m_and_n}
     \quad m_{min} \approx - \frac{ \dot{n} \log_2 \dot{p}}{ \ln 2 } \quad \mbox{and} \quad n_{max} \approx - \frac{ \dot{m} \ln 2 }{ \log_2 \dot{p}} \, .
    \end{equation}
Approximations \eqref{eqn:approx_optimal_k} and \eqref{eqn:approx_optimal_m_and_n} hold when \eqref{eqn:fp_optimal_approx} holds and differ when $n \ll m$, as shown by Fig~\ref{fig:optimal-k}.

For a concrete example, when $m=64$ and $n=4$ the approximations \eqref{eqn:fp_optimal_approx} and \eqref{eqn:approx_optimal_k} give $k^* \approx 11.09$ and $f^*(64,4) \approx 4.59 \times 10^{-4}$ for both the classic and standard Bloom filters.
However, $f^*_S(64,4) \approx 6.15 \times 10^{-4}$ when $k^*_S = 10$,  and $f^*_C(64,4) \approx 4.55 \times 10^{-4}$ when $k^*_C = 9$.
The estimate ${k^*}=11$ leads to false-positive rates of $f_S(64,4,11) \approx 6.25 \times 10^{-4}$ and $f_C(64,4,11) \approx 4.85 \times 10^{-4}$ which are not optimal.

    \begin{figure}
    	\begin{overpic}[scale=0.5]{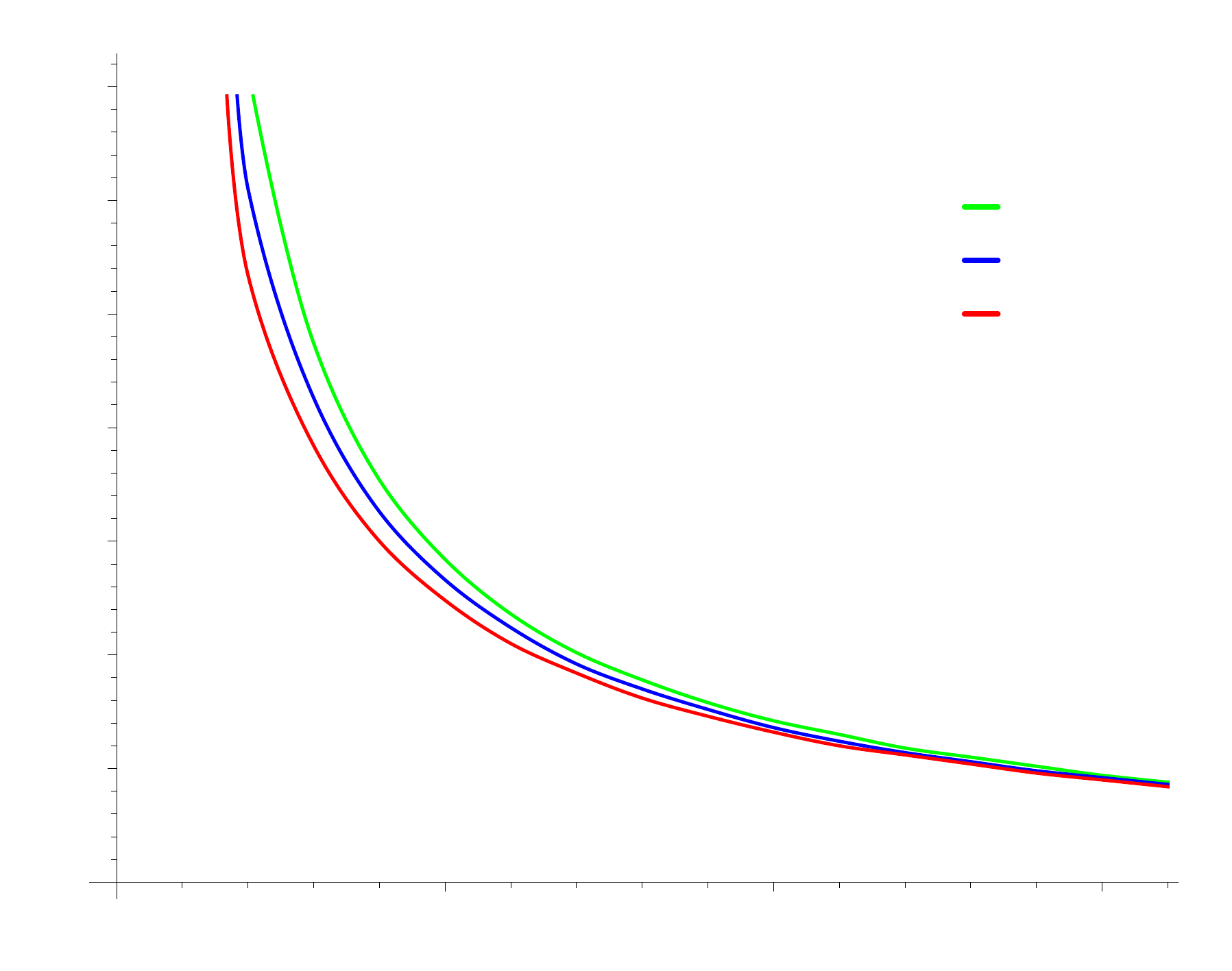}
    		\put(3,183){350}
        	\put(3,159){300}
        	\put(3,136){250}
        	\put(3,112){200}
        	\put(3,87){150}
        	\put(3,65){100}
        	\put(8,40){50}
            \put(22,200){$k$}
        	\put(22,4){0}
        	\put(92,4){5}
        	\put(158,4){10}
        	\put(228,4){15}
            \put(255,16){$n$}
            \put(213,159){$k^*$}
            \put(213,147){$k^*_S$}
            \put(213,135){$k^*_C$}
        \end{overpic}
        \caption{Optimal hashes bits $k^*$, $k^*_S$, and $k^*_C$ corresponding to the minimum false-positive rates $f^*(\hat{m},n)$, $f^*_S(\hat{m},n)$, and $f^*_C(\hat{m},n)$ for $\hat{m}=1024$.}
        \label{fig:optimal-k}
    \end{figure}

The estimate $k^*$ can yield a non-optimal encoding for the standard and classic Bloom filter; Fig~\ref{fig:fp-bounds} and Fig~\ref{fig:optimal-k} confirm this point pictorially.
The general conclusion inferred from the preceding example is summarized in the following conjecture.


\begin{conjecture}
\label{con:optimal-ks}
For $\frac{m}{n} \ge \frac{1}{\ln 2}$,
    \begin{equation*}
    \frac{m}{2n} \le k_C^* \le k_S^* \le \ln 2 \, \frac{m}{n} \, ,
    \end{equation*}
and $k_C^* = k_S^* = 1$ when $\frac{m}{n} < \frac{1}{ \ln 2 }$.
\end{conjecture}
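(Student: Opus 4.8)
The plan is to reduce the conjecture to two ingredients: \emph{unimodality} of each false-positive rate in $k$, and a \emph{sandwiching} of the two minimisers between the minimisers of the tractable bounds of Corollary~\ref{cor:fp-upper-lower-bounds}. First I would establish that, for fixed $m$ and $n$, both $f_C(m,n,\cdot)$ and $f_S(m,n,\cdot)$ are strictly decreasing and then strictly increasing in $k$ (the U-shape of Fig.~\ref{fig:fp-vs-k}). Granting this, each optimiser is the unique integer at which the forward difference in $k$, $\Delta_k f := f(m,n,k+1)-f(m,n,k)$, changes sign, so the whole problem becomes locating these sign changes. Unimodality itself I would try to extract from the recurrences \eqref{eqn:standard-bloom-recurrence} and \eqref{eqn:classic-bloom-recurrence}, or else from log-convexity of the underlying moment sequences of Lemma~\ref{lem:classic-moments-2} and \eqref{eqn:batch-factorial-moments}.

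For the upper bound $k_S^*\le \ln 2\,\tfrac mn$ I would exploit that the continuous surrogate $E(m,n,k)=(1-e^{-nk/m})^k$ of \eqref{eqn:exp-fp-lower-bound} has its unique minimum exactly at $k=\ln 2\,\tfrac mn$, where $e^{-nk/m}=\tfrac12$, together with the ordering $E\le M\le f_S$. Writing $f_S=E+R_S$ with $R_S\ge 0$, it suffices to show the defect satisfies $\Delta_k R_S\ge 0$ for integers $k\ge\ln 2\,\tfrac mn$: since $E$ is increasing there (so $\Delta_k E\ge 0$), the sum $f_S$ is nondecreasing on those integers, and unimodality pushes $k_S^*$ to the left of $\ln 2\,\tfrac mn$. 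I would control the defect through the exact expansion \eqref{eqn:standard-bloom-fp} and the second-order Jensen/Taylor correction $\tfrac{\sigma_S^2}{2}\varphi''(\mu_S)$ recorded after Theorem~\ref{thm:new-false-positive}.

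For the lower bound $\tfrac{m}{2n}\le k_C^*$ I would instead show that $f_C$ is still strictly decreasing for every integer $k<\tfrac{m}{2n}$, i.e. $\Delta_k f_C<0$ there. The plan is to bound $f_C(m,n,k+1)$ above by $U(m,n,k+1)$ of \eqref{eqn:fp-upper-bound} and $f_C(m,n,k)$ below by $L(m,n,k)$ of \eqref{eqn:fp-lower-bound}, and to verify $U(m,n,k+1)<L(m,n,k)$ on $k<\tfrac{m}{2n}$, where the rate is still large and (note that $U$ and $L$ coincide at $k=1$) the two envelopes separate quickly; the committee recurrence \eqref{eqn:classic-bloom-recurrence} gives an exact handle on $f_C(m,n,k+1)/f_C(m,n,k)$ should the crude bounds fail as $k$ approaches $\tfrac{m}{2n}$. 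The comparison $k_C^*\le k_S^*$ I would reduce, using unimodality, to the implication $\Delta_k f_S(k)\ge 0\Rightarrow\Delta_k f_C(k)\ge 0$ evaluated at $k=k_S^*$: if the classic rate has already turned upward wherever the standard rate has, its minimiser cannot lie further right. I would establish this by comparing the exact forward differences of \eqref{eqn:classic-bloom-fp} and \eqref{eqn:standard-bloom-fp} term by term. The degenerate regime $\tfrac mn<\tfrac1{\ln 2}$ is the easiest: unimodality reduces $k_C^*=k_S^*=1$ to the single inequalities $f(m,n,1)\le f(m,n,2)$, which follow by direct substitution into \eqref{eqn:standard-bloom-fp} and \eqref{eqn:classic-bloom-fp}.

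The hard part will be the two monotonicity claims that the closed-form envelopes cannot resolve on their own: that $\Delta_k R_S\ge 0$ near the optimum, and that $\Delta_k f_C$ changes sign no later than $\Delta_k f_S$. Near $k=\ln 2\,\tfrac mn$ the bounds $E,M,L,U$ all sit within a constant factor of one another, so separating $k$ from $k+1$ demands finer finite-difference estimates than Theorem~\ref{thm:batch-occupancy-bounds} supplies --- essentially a second-order expansion of $\nabla^k[\binom{x}{k}^n]$ and $\nabla^k[x^{nk}]$ in the parameter $k$. The extra wrinkle, absent from the continuous heuristic, is that in $f_S$ both the exponent and the ball count $nk$ move with $k$ (and in $f_C$ both the binomial order and the batch size move), so $\Delta_k f$ is \emph{not} a simple ratio of moments of a fixed distribution; taming this coupled dependence is, I expect, the crux of any complete proof, and is presumably why the statement is posed here as a conjecture.
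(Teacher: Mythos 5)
First, a point of comparison: the paper contains no proof of this statement --- it is posed as Conjecture~\ref{con:optimal-ks} and explicitly listed among the open problems in Section~\ref{sec:discuss} --- so there is no ``paper proof'' for your attempt to match, and any complete argument would be a new result. Your submission, however, is a program rather than a proof: every load-bearing claim (unimodality of $f_S(m,n,\cdot)$ and $f_C(m,n,\cdot)$ in $k$, the defect inequality $\Delta_k R_S \ge 0$ past $\ln 2\,\frac{m}{n}$, and the implication $\Delta_k f_S \ge 0 \Rightarrow \Delta_k f_C \ge 0$) is deferred, and you concede in your last paragraph that these are exactly the hard points. Two of your proposed tools moreover cannot deliver what you ask of them. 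The recurrences \eqref{eqn:standard-bloom-recurrence} and \eqref{eqn:classic-bloom-recurrence} recurse in $h$ and $m$ with $k$ frozen inside the exponents, so they relate neither $f(m,n,k)$ to $f(m,n,k+1)$ nor their ratio; and log-convexity of $k \mapsto \mathbb{E}[X^k]$ is unavailable because the law of $X$ itself moves with $k$ (the ball count is $nk$, the batch size is $k$) --- the coupling you correctly flag at the end, but which disables the very route you propose for unimodality.

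Second, one step of the plan demonstrably fails where it is most needed. For the lower bound you would verify $U(m,n,k+1) < L(m,n,k)$ for all integers $k < \frac{m}{2n}$. Take the parameters of Fig.~\ref{fig:fp-bounds}, $m=1000$ and $n=20$, so $\frac{m}{2n}=25$, and $k=24$: then $U(1000,20,25) = \bigl(1-(0.975)^{20}\bigr)^{25} \approx e^{-23.08}$, while the paper's own iteration in the proof of Corollary~\ref{cor:fp-upper-lower-bounds} gives $L(m,n,k) \le \bigl(1-(1-\frac{1}{m})^{nk}\bigr)^{k}$, so $L(1000,20,24) \le e^{-23.14}$. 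Hence $U(k+1) > L(k)$: the envelopes cross precisely as $k$ approaches $\frac{m}{2n}$, which is the only regime relevant to the bound $\frac{m}{2n} \le k_C^*$, and the fallback (``the recurrence \eqref{eqn:classic-bloom-recurrence} gives an exact handle'') again recurses in the wrong variables. Two further gaps: proving $f_S$ nondecreasing on integers $k \ge \ln 2\,\frac{m}{n}$ only yields $k_S^* \le \lceil \ln 2\,\frac{m}{n} \rceil$, so the conjectured real-valued bound additionally requires $f_S(\lfloor x\rfloor) \le f_S(\lceil x\rceil)$ at $x = \ln 2\,\frac{m}{n}$; and the degenerate regime is a genuine two-parameter inequality $f(m,n,1)\le f(m,n,2)$ on $\frac{m}{n} < \frac{1}{\ln 2}$ (plus the unproven unimodality), not a ``direct substitution.'' In short, the architecture --- unimodality plus sandwiching between tractable envelopes --- is a sensible way to attack the conjecture and correctly uses the orderings $E \le M \le f_S \le U$ and $L \le f_C \le U$ from Corollary~\ref{cor:fp-upper-lower-bounds}, but nothing beyond those known bounds is actually established, one proposed inequality is false near the boundary, and the statement remains open.
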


For fixed $m$ and $n$, choosing $k$ to minimizes the false-positive rate is most efficient use of the filter.
Deciding whether a decreased false-positive justifies an increase in $m$ or a decrease is $n$ is harder to determine. 
This notion filter efficiency is codified in the following section.

\section{Filter Efficiency}
\label{sec:filter-efficiency}

A Bloom filter is one of many solutions to the approximate set membership problem. 
In addition to the dozens of Bloom filter variants \cite{Luo2018Optimizing}, there are set membership filters constructed from completely different paradigms (e.g., \cite{Fan2014Cuckoo,Pandey2017Counting,Porat2009Optimal,Weaver2014SatFilter,Weaver2018XorSat}).
While some of these alternatives offer functionality beyond membership checking -- such as the ability to count, remove, or list the items -- all filter designs are assessed on their space and time performance.
Filter efficiency is a metric for how well a filter makes use of its available storage.
Formally, the \textit{efficiency} of a filter is given by
    \begin{equation*}
    \varepsilon = \frac{n}{m} \log_2 \frac{1}{p}
    \end{equation*}
where $n$ is the number of items stored, $m$ is the filter size in bits, and $p$ is the false-positive rate.
Walker \cite{Walker2007Filters,Weaver2014SatFilter} showed that filter efficiency is bounded by $0 < \varepsilon \le 1$ when items are sampled from a continuous uniform distribution, i.e., the universe is $\mathcal{D}=[0,1]$ and each hash function uniformly maps $h:\mathcal{D} \to [m]$.
A filter that reaches the efficiency limit can store $n$ items at a false-positive rate of $p$ using $n \log_2 \left( 1 / p \right)$ bits.
The value $\log_2 p^{-1}$ measures the \textit{bits of cut-down}, i.e., expresses the average number of bits required to store each item at a given false-positive rate.


\subsection{Bloom Filter Efficiency}

The false-positive rate of a Bloom filter is a function of filter size, number of stored items, and the number of hash bits, i.e., $p = f(m,n,k)$ and the efficiency can be computed as
    \begin{equation*}
    \varepsilon(m,n,k) = - \frac{n}{m} \log_2 f(m,n,k) \, .
    \end{equation*}
From equations \eqref{eq:bloom-false-positive} and \eqref{eqn:classic-fp-factorial-moments}, Bloom filter efficiency can be directly computed from the factorial and binomial cumulants of the committee distributions $X_S$ and $X_C$, respectively.


\begin{corollary}
    An $m$-bit standard Bloom filter storing $n$ items using $k$ hash functions has efficiency
        \begin{align}
        \varepsilon_S (m,n,k) = \frac{n k}{m} \log_2 m - \frac{n}{m} \log_2 \mathbb{E}\left[ {X_S}^k \right]
        \end{align}
    where $X_S$ is a classic occupancy random variable, and a classic Bloom filter has efficiency
        \begin{align}
        \varepsilon_C (m,n,k) = \frac{n}{m} \log_2 \binom{m}{k} - \frac{n}{m} \log_2 \mathbb{E}\left[ \binom{X_C}{k} \right] \, .
        \end{align}
    where $X_C$ is a committee random variable.
\end{corollary}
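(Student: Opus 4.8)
The plan is to combine the definition of filter efficiency with the two false-positive rate formulas already established, and then apply the elementary logarithm rules for quotients and powers. Since efficiency is defined by $\varepsilon = \frac{n}{m}\log_2\frac{1}{p} = -\frac{n}{m}\log_2 p$ with $p$ the false-positive rate, the whole argument reduces to unwinding $-\log_2 f_S$ and $-\log_2 f_C$ into a difference of two logarithms.

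First, for the standard Bloom filter I would substitute $p = f_S(m,n,k)$, using equation~\eqref{eq:bloom-false-positive} in the form $f_S = \mathbb{E}[X_S^k]/m^k$, where $X_S$ is the classic occupancy random variable identified in Table~\ref{tab:bloom-as-occupancy}. Then
\[
\varepsilon_S = -\frac{n}{m}\log_2 f_S = -\frac{n}{m}\left( \log_2 \mathbb{E}[X_S^k] - k\log_2 m \right),
\]
where I have used $\log_2(a/b) = \log_2 a - \log_2 b$ together with $\log_2 m^k = k\log_2 m$. Distributing the factor $-\frac{n}{m}$ yields the claimed identity $\varepsilon_S = \frac{nk}{m}\log_2 m - \frac{n}{m}\log_2 \mathbb{E}[X_S^k]$. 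The classic case is structurally identical: I would substitute $p = f_C(m,n,k)$ from equation~\eqref{eqn:classic-fp-factorial-moments}, namely $f_C = \mathbb{E}\!\left[\binom{X_C}{k}\right]/\binom{m}{k}$ with $X_C$ the committee random variable, and again split the logarithm of the quotient to obtain $\varepsilon_C = \frac{n}{m}\log_2\binom{m}{k} - \frac{n}{m}\log_2 \mathbb{E}\!\left[\binom{X_C}{k}\right]$.

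There is no genuine obstacle here: the statement is a direct algebraic consequence of the efficiency definition and the moment expressions for the false-positive rates derived in Sections~\ref{subsec:standard-false-positive} and~\ref{subsec:classic-false-positive}. The only point that warrants a word of care is the correct identification of the underlying occupancy distribution in each case—that the standard filter's false-positive rate is governed by the \emph{raw} $k$th moment of the classic occupancy variable $X_S$, whereas the classic filter's rate is governed by the $k$th \emph{binomial} moment of the committee variable $X_C$. Both identifications are exactly those recorded in Table~\ref{tab:bloom-as-occupancy}, so once they are invoked the two efficiency formulas follow immediately.
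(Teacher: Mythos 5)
Your proposal is correct and matches the paper's intended argument exactly: the paper states this corollary without a separate proof precisely because it follows from substituting $f_S = \mathbb{E}[X_S^k]/m^k$ from \eqref{eq:bloom-false-positive} and $f_C = \mathbb{E}\bigl[\binom{X_C}{k}\bigr]/\binom{m}{k}$ from \eqref{eqn:classic-fp-factorial-moments} into $\varepsilon = -\frac{n}{m}\log_2 p$ and splitting the logarithm of the quotient. Your identification of $X_S$ as the classic occupancy variable (raw $k$th moment) and $X_C$ as the committee variable (binomial moment) is also exactly the correspondence the paper records in Table~\ref{tab:bloom-as-occupancy}.
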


The \textit{peak} \textit{efficiency} $\varepsilon^*(m,k)$ of an $m$-bit Bloom filter using $k$ hash bits occurs at $n^*$ such that $\varepsilon(m,n^*,k) \ge \varepsilon(m,n,k)$ for all $n \ge 1$, and the \textit{maximum efficiency} for a Bloom filter is 
    \begin{equation*}
     \varepsilon^*(m) = \max\limits_{n,k} \, \varepsilon(m,n,k) \, .
    \end{equation*}

The formulas for the false-positive rate of the standard Bloom filter \eqref{eq:bloom-false-positive} and classic Bloom filter\eqref{eqn:classic-bloom-fp} are somewhat complicated, so it is tempting to compute the efficiency using an approximation for the false positive rate.
Fig.~\ref{fig:fp-bounds} confirms that
    \begin{equation*}
    f(m,n,k) \approx M(m,n,k) = \left( 1 - \left( 1 - \frac{1}{m} \right)^{n k} \right)^k
    \end{equation*}
is a good estimate for both the standard and classic Bloom filter, and yields constant peak efficiencies
    \begin{equation*}
    \varepsilon_M^*(m,k) = \frac{\ln 2}{m \, \ln \left( \frac{m}{m-1} \right)} \quad \mbox{at} \quad n_M^* = \frac{ \ln 2}{k \ln \left( \frac{m}{m-1} \right)}
    \end{equation*}
for each $1 \le k \le m$.
Similarly, the most common false-positive approximation 
    \begin{equation*}
    f(m,n,k) \approx E(m,n,k) = (1-e^{-nk/m})^k
    \end{equation*}
leads to a similar characterization: constant peak efficiencies of 
    \begin{equation*}
    \varepsilon_E^*(m,k) = \ln 2    \quad   \mbox{at}   \quad   n_E^* = \frac{m}{k} \ln 2 \, .
    \end{equation*}
Noting that $\ln \! \left( \frac{m}{m-1} \right) \approx m$, these two estimates agree that the maximum efficiency of a Bloom filter is $\varepsilon^*(m) \approx \ln 2$.
However, Sections \ref{sec:standard-bloom-efficiency} and \ref{sec:classic-bloom-efficiency} reveal that peak efficiencies of Bloom filters are non-constant and change monotonically according to the number of hash bits used to encode each item.
Using more hash bits increases the peak efficiencies of a classic Bloom filter, but decreases the peak efficiencies of a standard Bloom filter.
While the maximum efficiency of a standard Bloom filter is indeed bounded by $\ln 2$, the maximum efficiency of a classic Bloom filter approaches the information-theoretic limit of 1.


\subsubsection{Standard Bloom Filter Efficiency}
\label{sec:standard-bloom-efficiency}


\begin{theorem}
    \label{thm:decreasing-efficiency}
    Let $\varepsilon_S (m,n,k)$ be the efficiency of an $m$-bit standard Bloom filter that encodes $n$ items using $k$ hash bits. For $n,m \ge 2$, and $k \in \mathbb{N}$,
        \begin{equation}
        \varepsilon_S \left(m,\frac{n}{k},k\right) > \varepsilon_S \left(m,\frac{n}{k+1},k+1\right) \, .
        \end{equation}
\end{theorem}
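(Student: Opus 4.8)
The plan is to exploit the reparametrization that is built into the statement. A standard Bloom filter with $N$ items and $K$ hash bits is modeled (Table~\ref{tab:bloom-as-occupancy}) by a classic occupancy variable with $NK$ balls in $m$ urns. Setting $N=n/k$, $K=k$ produces exactly $n$ balls, and setting $N=n/(k+1)$, $K=k+1$ again produces exactly $n$ balls. Hence \emph{both} efficiencies are expressed through the moments of one and the same random variable $X_S$ --- the classic occupancy number for $n$ balls in $m$ urns. Using the efficiency formula $\varepsilon_S(m,N,K)=\frac{NK}{m}\log_2 m-\frac{N}{m}\log_2\mathbb{E}\!\left[X_S^{K}\right]$, I would write out both sides, observe that the leading term $\frac{n}{m}\log_2 m$ is identical in each, and cancel it.

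After cancellation the claim reduces to a pure moment inequality. Writing $M_K=\mathbb{E}\!\left[X_S^{K}\right]$, the desired bound $\varepsilon_S(m,n/k,k)>\varepsilon_S(m,n/(k+1),k+1)$ becomes
\[
\frac{\log_2 M_{k+1}}{k+1}>\frac{\log_2 M_{k}}{k},
\]
which is equivalent to $M_k^{1/k}<M_{k+1}^{1/(k+1)}$, i.e.\ to strict monotonicity of the $L^p$-norms $\lVert X_S\rVert_p=\bigl(\mathbb{E}\!\left[X_S^{p}\right]\bigr)^{1/p}$ at the two integer points $p=k$ and $p=k+1$.

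I would then establish this monotonicity by Lyapunov's inequality --- Jensen applied to the strictly convex map $x\mapsto x^{(k+1)/k}$ on $[0,\infty)$. This yields $\bigl(\mathbb{E}\!\left[X_S^{k}\right]\bigr)^{(k+1)/k}\le\mathbb{E}\!\left[X_S^{k+1}\right]$, that is $M_k^{1/k}\le M_{k+1}^{1/(k+1)}$, with equality only when $X_S^{k}$ is almost surely constant. To obtain the strict inequality demanded by the theorem I would check that $X_S$ is non-degenerate under the hypotheses $n,m\ge 2$: indeed $\mathbb{P}[X_S=1]=m^{1-n}>0$ (all $n$ balls landing in a single urn) while $\mathbb{P}[X_S\ge 2]=1-m^{1-n}>0$, so $X_S$ --- and hence $X_S^{k}$ --- takes at least two values with positive probability, forcing Jensen's inequality to be strict.

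The main obstacle is really just recognizing the reparametrization: once one sees that $n/k$ and $n/(k+1)$ items hold the ball-count fixed at $n$, the problem collapses to the classical fact that $L^p$-norms increase in $p$, and no delicate occupancy estimates are required. The only point demanding genuine care is the strictness of Jensen's inequality, and this is precisely where the hypotheses $n,m\ge 2$ enter, guaranteeing that $X_S$ is not almost surely constant.
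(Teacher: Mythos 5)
Your proof is correct and follows essentially the same route as the paper's: both exploit that $n/k$ items with $k$ hashes and $n/(k+1)$ items with $k+1$ hashes are modeled by one and the same classic occupancy variable $X_S$ ($n$ balls in $m$ urns), and both reduce the claim to strict monotonicity of $\bigl(\mathbb{E}\left[X_S^{p}\right]\bigr)^{1/p}$ in $p$ --- the paper via H\"older's inequality, you via the equivalent Lyapunov/Jensen argument. If anything, your justification of strictness (non-degeneracy of $X_S$ for $n,m\ge 2$, since $\mathbb{P}[X_S=1]=m^{1-n}\in(0,1)$) is cleaner than the paper's stated equality condition.
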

\begin{proof}
    Fix $n,m,k \in \mathbb{N}$. Observe that
        \begin{equation}
        \label{eq:false-positive-power}
        f_S\left(m,\frac{n}{k+1},k+1\right)^{\frac{1}{k+1}} = \mathbb{E}\left[\left(\frac{X_S}{m}\right)^{k+1}\right]^{\frac{1}{k+1}}
        = \frac{1}{m} \, \mathbb{E} \left[ \left| X_S \right|^{k+1} \right]^{\frac{1}{k+1}} \, .
        \end{equation}
    Let $Z$ be the occupancy number for a classic urn r.v. that models an $m$-bit standard Bloom filter storing $\frac{n}{k}$ items using $k$ hashes, then $X_S \sim Z$ and H\"{o}lder's inequality gives
        \begin{equation} \label{eq:holders-inequality}
        \frac{1}{m} \, \mathbb{E} \left[ \left| X_S \right|^{k+1} \right]^{\frac{1}{k+1}} 
        \ge  \frac{1}{m} \, \mathbb{E} \left[ \left| X_S \right|^k \right]^{\frac{1}{k}}
        = \mathbb{E} \left[ \left( \frac{Z}{m} \right)^k \right]^{\frac{1}{k}} \!
        = f_S \! \left( m, \frac{n}{k}, k \right)^{\frac{1}{k}} \! .
        \end{equation}
    Equality holds when $\mathbb{P} \left[ X_S = i \mid m, n, k \right] = \lambda$ for $i\in [m]$ some constant $\lambda \ge 0$, e.g., when $n=1$.
    
    Let $\varepsilon_S^k = \varepsilon_S(m,n/k,k)$ for a fixed $m,n\ge2$. By \eqref{eq:false-positive-power} and \eqref{eq:holders-inequality},
    	\begin{equation*}
    	2^{\varepsilon_S^k - \varepsilon_S^{k+1}} = \left( \frac{f_S\left(m,\frac{n}{k+1},k+1\right)^{\frac{1}{k+1}}}{f_S\left(m,\frac{n}{k},k\right)^{\frac{1}{k}}} \right)^{\frac{n}{m}} > 1
    	\end{equation*}
    since $\frac{n}{m}>0$. 
    The exponential function $2^x$ strictly increases and $2^{\varepsilon_k} > 2^{\varepsilon_{k+1}}$, therefore $\varepsilon_S^k > \varepsilon_S^{k+1}$.
\end{proof}

    \begin{figure}
    	\begin{overpic}[scale=0.35]{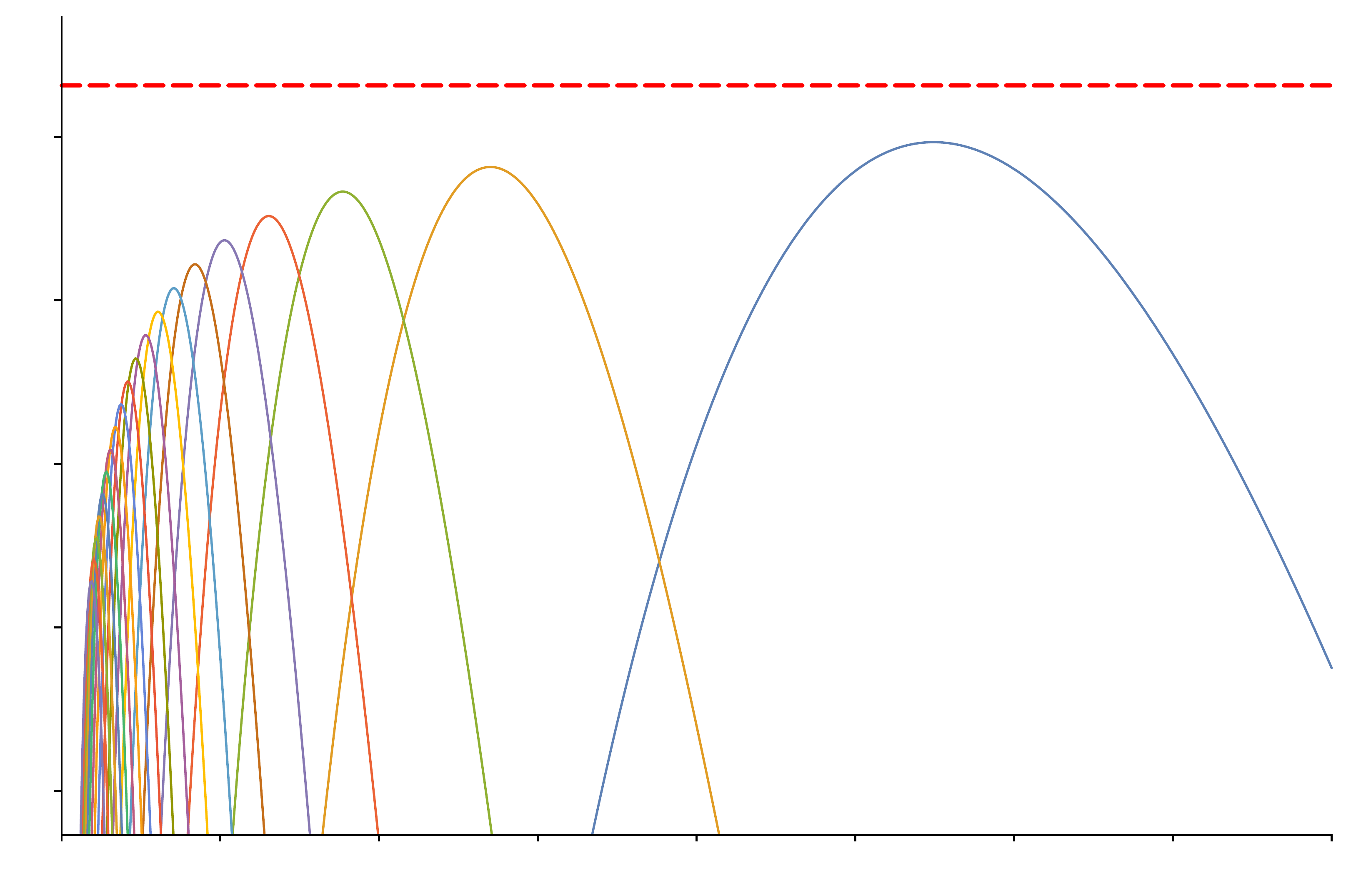}
    		\put(-5,181.5){0.69}
        	\put(-5,142){0.68}
        	\put(-5,102.5){0.67}
        	\put(-5,62.5){0.66}
        	\put(-5,22.5){0.65}
        	\put(12,3){0}
        	\put(87,3){25}
        	\put(164,3){50}
        	\put(241,3){75}
        	\put(316,3){100}
            \put(328,13){$n$}
            \put(290,200){$\varepsilon = \ln 2$}
            \put(270,160){$k = 1$}
            \put(140,160){$k = 2$}
        \end{overpic}
        \caption{Efficiency of an $m=100$ bit standard Bloom filter storing $n$ items encoded using $k=1,2,\dots,20$ hashes. Maximum efficiency $\varepsilon_S^*(100)=0.69$ occurs at $n=69$ and $k=1$.}
        \label{fig:standard-efficiency}
    \end{figure}


\begin{corollary}
    \label{cor:standard-max-efficiency}
    The maximum efficiency of an $m$-bit standard Bloom filter is
        \begin{equation}
        \varepsilon_S^* = \frac{1}{m \log_2\left( \frac{m}{m-1} \right)} \, ,
        \end{equation}
    and is uniquely obtained when $n = \left( \log_2 \frac{m}{m-1} \right)^{-1}$ items are encoded using a single hash function, i.e., $k=1$, at a false positive rate of $p=1/2$.
\end{corollary}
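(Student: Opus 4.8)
The plan is to prove the result in two stages: first reduce to the single-hash case $k=1$ using the monotonicity already established in Theorem~\ref{thm:decreasing-efficiency}, then optimize the resulting one-parameter expression, which I will phrase in terms of the false-positive rate $p$ rather than $n$.

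First I would observe that a standard Bloom filter storing $n'$ items with $k$ hashes is modelled by a classic occupancy random variable with $N := n'k$ balls, and that the two configurations compared in Theorem~\ref{thm:decreasing-efficiency} share the \emph{same} ball count $N$ (since $(n/k)\cdot k = (n/(k+1))\cdot(k+1) = n$). Fixing $N$ and invoking the theorem with $n = N$ for each $k$ yields the strictly decreasing chain $\varepsilon_S(m,N,1) > \varepsilon_S(m,N/2,2) > \varepsilon_S(m,N/3,3) > \cdots$, so among all configurations with a fixed number of balls the efficiency is maximized strictly at $k=1$. Consequently, for every $k \ge 2$ one has $\sup_n \varepsilon_S(m,n,k) = \sup_N \varepsilon_S(m,N/k,k) < \sup_N \varepsilon_S(m,N,1)$, and it suffices to maximize $\varepsilon_S(m,n,1)$ over $n$; the strictness forces $k=1$ uniquely at the optimum.

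Next I would set $k=1$, where $f_S(m,n,1) = \mathbb{E}[X_S]/m = 1 - (1-1/m)^n =: p$. Solving for $n$ gives $n = -\ln(1-p)/\ln\frac{m}{m-1}$, and substituting into $\varepsilon_S = -\frac{n}{m}\log_2 p$ turns the efficiency into $\frac{\ln p\,\ln(1-p)}{m\,\ln 2\,\ln\frac{m}{m-1}}$, a positive constant multiple of $G(p) := \ln p\,\ln(1-p)$ on $(0,1)$. Maximizing $G$ then pins down everything: at the maximizer $p=1/2$ one has $G(1/2) = (\ln 2)^2$, which gives $\varepsilon_S^* = \frac{\ln 2}{m\,\ln\frac{m}{m-1}} = \frac{1}{m\log_2\frac{m}{m-1}}$ and $n = \frac{\ln 2}{\ln\frac{m}{m-1}} = \bigl(\log_2\frac{m}{m-1}\bigr)^{-1}$, matching the claimed values together with the optimal rate $p = 1/2$.

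The main obstacle is showing that $G$ attains its maximum \emph{uniquely} at $p=1/2$. The symmetry $G(p) = G(1-p)$ makes $p=1/2$ a critical point, but uniqueness requires ruling out other critical points. I would do this by writing $G'(p) = \frac{\ln(1-p)}{p} - \frac{\ln p}{1-p}$ and reducing the sign of $G'$ to that of $\phi(p) := p\ln p - (1-p)\ln(1-p)$, which is odd about $p=1/2$ with $\phi(0^+) = \phi(1/2) = 0$ and $\phi'(p) = \ln\!\bigl(p(1-p)\bigr) + 2$. A short sign analysis of $\phi'$ shows $\phi < 0$ on $(0,1/2)$, hence $G' > 0$ on $(0,1/2)$ and (by symmetry) $G' < 0$ on $(1/2,1)$, so $G$ is strictly unimodal and the maximizer $p=1/2$ is unique. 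Combined with the strict inequality from Theorem~\ref{thm:decreasing-efficiency} that forces $k=1$, this delivers the uniqueness asserted in the statement.
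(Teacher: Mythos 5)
Your proposal is correct and takes essentially the same route as the paper: the reduction to $k=1$ via Theorem~\ref{thm:decreasing-efficiency} (read, as you make explicit, at fixed ball count $N=n'k$) is exactly the paper's first step, and your reparametrization by $p$ turns the paper's critical equation $x^n\ln x^n=(1-x^n)\ln(1-x^n)$ for $g(n)=(1-x^n)^{-n}$ into the identical condition $G'(p)=0$ for $G(p)=\ln p\,\ln(1-p)$, with $t=x^n=1-p$. The one genuine improvement is your sign analysis of $\phi(p)=p\ln p-(1-p)\ln(1-p)$, which proves strict unimodality and hence the \emph{uniqueness} of the maximizer $p=1/2$ --- a point the paper's proof leaves implicit, since it only exhibits the critical point without ruling out others.
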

\begin{proof}
    Fix $m \ge 2$. 
    Assume that an $m$-bit Bloom filter achieves maximum efficiency using $k_S^* \ge 2$ hashes for some amount of items $n$. 
    From Lemma~\ref{thm:decreasing-efficiency}, $\varepsilon_S \left( m, \frac{ n }{ k_S^*-1 }, k_S^* - 1 \right) > \varepsilon_S \left( m, \frac{n}{k_S^*}, k_S^* \right)$ and contradicts the assumption that maximal efficiency can be achieved by $k>1$.
    
    The efficiency for Bloom filters that use $k=1$ hash function is
        \begin{equation*}
        \varepsilon_S(m,n,1) = \frac{\ln\left(1-\left(1-\frac{1}{m}\right)^n\right)^{-n}}{m \ln 2} \, .
        \end{equation*}
    Since the natural logarithm strictly increases, the efficiency is maximized when $g(n)=\left( 1 - ( 1 - \frac{1}{m} )^n  \right)^{-n}$ is at a maximum. 
    Setting $x=1-\frac{1}{m}$, gives
        \begin{equation*}
        g'(n)= \frac{(x^n \ln x^n) - ((1-x^n) \ln (1-x^n))}{(1-x^n)^{n+1}}
        \end{equation*}
    which equates to zero when $x^n = 1/2$, i.e., $n = - \ln 2 / \ln \left(1-\frac{1}{m}\right)$.
\end{proof}


\subsubsection{Classic Bloom Filter Efficiency}
\label{sec:classic-bloom-efficiency}

The most efficient standard Bloom filter uses a single bit to encode each item, i.e., the case where the constructions for the standard and classic Bloom filter agree.
Hence the most efficient classic Bloom filter is at least as efficient as a standard Bloom filter.
However, hashing with a single bit realizes the smallest peak efficiency for a classic Bloom filter.


\begin{conjecture}
    \label{con:classic-max-efficiency}
    Let $\varepsilon_C(m,n,k)$ be the efficiency of an $m$-bit classic Bloom filter that encodes $n$ items using $k$ hash bits. For $1 \le k \le \frac{m}{2}-1$,
        \begin{equation*}
        \varepsilon_C^*(m,k) < \varepsilon_C^*(m,k+1)
        \end{equation*}
    and the maximum efficiency of an $m$-bit classic Bloom filter is
        \begin{equation}
        \varepsilon_C^*(m) = \frac{1}{m} \log_2 \binom{m}{m/2} \, ,
        \end{equation}
    and is uniquely obtained when $n=1$ items are encoded using $k=\frac{m}{2}$ hash bits at a false positive rate of $p=\binom{m}{m/2}^{-1}$.
\end{conjecture}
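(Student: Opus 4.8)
The plan is to treat the two assertions separately, since the global maximum value follows from a one-sided bound while the strict monotonicity of the peak efficiencies is a finer structural fact. Throughout I write $\varepsilon_C(m,n,k)=-\tfrac{n}{m}\log_2 f_C(m,n,k)$ and use $f_C(m,n,k)=\mathbb{E}[\binom{X_C}{k}]/\binom{m}{k}$ from \eqref{eqn:classic-fp-factorial-moments} together with $\mu_C=\mathbb{E}[X_C]=m(1-(1-k/m)^n)$ from \eqref{eqn:batch-occupancy-mean}. First I would dispose of the boundary case $n=1$: a single stored item occupies exactly $k$ distinct bits, so $X_C\equiv k$, hence $f_C(m,1,k)=1/\binom{m}{k}$ and $\varepsilon_C(m,1,k)=\tfrac1m\log_2\binom mk$. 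Over $k$ this is maximized precisely at $k=m/2$, which already reproduces the claimed value $\varepsilon_C^*(m)=\tfrac1m\log_2\binom m{m/2}$ and rate $p=\binom m{m/2}^{-1}$. Thus the entire content of the second display is that no configuration with $n\ge 2$ can match this and that the optimizer is unique.

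For the global bound I would prove the equivalent inequality $\varepsilon_C(m,n,k)\le\tfrac1m\log_2\binom m{m/2}$, i.e. $f_C(m,n,k)^{\,n}\ge\binom m{m/2}^{-1}$. Since $X_C\ge k$ always and $x\mapsto\binom xk$ is convex on $[k,\infty)$, Jensen's inequality gives $\mathbb{E}[\binom{X_C}{k}]\ge\binom{\mu_C}{k}$ and hence
\begin{equation*}
\varepsilon_C(m,n,k)\le \frac{n}{m}\,\log_2\frac{m^{\underline{k}}}{\mu_C^{\underline{k}}}=:B(m,n,k),
\end{equation*}
a bound that is an equality at $n=1$, so the equality case is preserved. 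The task then reduces to the purely deterministic inequality $B(m,n,k)\le\tfrac1m\log_2\binom m{m/2}$, which I would attack with binary-entropy and Stirling estimates of the falling factorials: in the regime $k\ll m$ one finds $B\approx t\log_2\frac{1}{1-e^{-t}}$ with $t=nk/m$, whose maximum is $\ln 2<1$, while for $k$ comparable to $m$ the entropy expansion of $\log_2\binom m k$ and $\log_2\binom{\mu_C}{k}$ keeps $B$ bounded away from $\tfrac1m\log_2\binom m{m/2}$ for every $n\ge 2$. Verifying this gap uniformly in $(n,k)$ for all admissible $m$ is the one genuinely computational step, and it settles the second display including uniqueness.

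The monotonicity of the peak efficiencies, $\varepsilon_C^*(m,k)<\varepsilon_C^*(m,k+1)$, is the hard part and I expect it to be the main obstacle. There is a threshold $k_0(m)$ above which the supremum over $n$ is attained at $n=1$; the binding constraint is $\varepsilon_C(m,1,k)\ge\varepsilon_C(m,2,k)$, i.e. $f_C(m,2,k)\ge\binom m k^{-1/2}$, and on $[k_0,m/2]$ one would then have $\varepsilon_C^*(m,k)=\tfrac1m\log_2\binom mk$, which is manifestly strictly increasing. The difficulty is confined to the interior-peak regime $k<k_0$, where the optimizing $n$ is large and there is no closed form for $\varepsilon_C^*(m,k)$. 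The natural idea, to transplant the Hölder/power-mean argument of Theorem~\ref{thm:decreasing-efficiency}, fails here: that argument yields the \emph{decreasing} monotonicity of the standard peaks, and, crucially, $f_C=\mathbb{E}[\binom{X_C}{k}]/\binom mk$ is a binomial moment rather than the pure power moment $\mathbb{E}[(X_S/m)^k]$ to which Hölder applied.

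The real phenomenon to capture is that for small $k$ the classic and standard filters agree up to an intra-item collision correction of order $\binom k2/m$, so that $\varepsilon_C^*(m,k)=\varepsilon_S^*(m,k)+\delta_k$; the classic construction never self-collides within an item, which is the source of the gap $f_C<f_S$ visible in Figure~\ref{fig:fp-bounds}, so $\delta_k>0$ and $\delta_k$ grows with $k$. Since $\varepsilon_S^*(m,k)$ is itself decreasing, proving $\varepsilon_C^*(m,k)$ increasing demands showing that the growth of the advantage $\delta_k$ strictly dominates the decay of $\varepsilon_S^*(m,k)$ at every step, a second-order comparison of two small quantities. I would try to make this precise either by establishing log-convexity of $n\mapsto\log f_C(m,n,k)$, which would simultaneously pin down the interior optimizer and justify the $n=1$ reduction above $k_0$, or by exhibiting an explicit majorization that couples the near-optimal configuration at $k$ to a strictly better one at $k+1$. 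Securing either uniform estimate is, I believe, the crux of the conjecture.
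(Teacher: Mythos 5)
The statement you are proving is Conjecture~\ref{con:classic-max-efficiency}: the paper deliberately offers \emph{no} proof, only the empirical evidence of Fig.~\ref{fig:classic-efficiency}, the empirical formulas in the subsequent remark, and Corollary~\ref{cor:asymp-eff}, which treats only the asymptotics of the extremal configuration $n=1$, $k=m/2$ (and, in the discussion, the paper explicitly lists this among the open questions). So there is no paper proof to compare against; the only question is whether your proposal closes the conjecture, and it does not. What you do establish is correct and worth recording: for $n=1$ one has $X_C\equiv k$, hence $f_C(m,1,k)=\binom{m}{k}^{-1}$ and $\varepsilon_C(m,1,k)=\frac{1}{m}\log_2\binom{m}{k}$, maximized at $k=m/2$ -- this is exactly the computation implicit in Corollary~\ref{cor:asymp-eff}. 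Your Jensen step is likewise sound: it is the middle inequality of Theorem~\ref{thm:batch-occupancy-bounds} with $r=k$ (note this needs $k\le m-k$, which happens to match the conjecture's range $k\le m/2$), and since equality in Jensen forces $X_C$ degenerate, i.e.\ $n=1$, it correctly routes the uniqueness claim through a strict deterministic gap for $n\ge 2$.

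The genuine gaps are two. First, the reduction of the maximum-value claim to the deterministic inequality $B(m,n,k)\le\frac{1}{m}\log_2\binom{m}{m/2}$ is only verified in the regime $k\ll m$, where the cap is $\ln 2$ and there is ample room; but the binding regime is $k=\Theta(m)$ with small $n$ (in the entropy limit, $n=2$, $k\approx 0.3\,m$ gives $B\approx 0.77$), and there both $B$ and the target $\frac{1}{m}\log_2\binom{m}{m/2}=1-\Theta\!\left(\frac{\log m}{m}\right)$ carry finite-$m$ corrections of the same order, so ``bounded away'' must actually be proved uniformly in $(m,n,k)$, not just asserted -- this is more than a computation, since $m$ is unbounded. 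Second, and more fundamentally, the first display of the conjecture -- strict monotonicity $\varepsilon_C^*(m,k)<\varepsilon_C^*(m,k+1)$ -- is not proved at all, as you candidly acknowledge. Your diagnosis of why is accurate: the H\"older/power-mean mechanism of Theorem~\ref{thm:decreasing-efficiency} exploits that $f_S=\mathbb{E}[(X_S/m)^k]$ is a pure power moment, whereas $f_C=\mathbb{E}\bigl[\binom{X_C}{k}\bigr]/\binom{m}{k}$ is a binomial moment of a distribution that itself changes with $k$, so no analogous interpolation inequality is available; and your fallback (log-convexity of $n\mapsto\log f_C(m,n,k)$, or a majorization coupling configurations at $k$ and $k+1$) is a program, not an argument -- even your $n^*=1$ threshold criterion compares only $n=1$ against $n=2$ and would need unimodality in $n$ to conclude. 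As it stands, your proposal settles the boundary case and gives a plausible two-lemma reduction, but both lemmas remain open, which is precisely the status the paper assigns to this statement.
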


Figure~\ref{fig:classic-efficiency} empirically confirms that the peak efficiencies for the classic Bloom filter increases monotonically with the number of hash bits. 
The case $k=1$ coincides with the standard Bloom filter, i.e., $\varepsilon_C^{*}(m,1)=\varepsilon_S^{*}(m,1) < \ln 2$.
Corollary~\ref{cor:asymp-eff} proves that the peak efficiency for the extremal case $k=m/2$ increases to the efficiency limit of 1 as $m$ increases.


\begin{remark}
    Empirically, the peak efficiency for a classic Bloom filter with fixed $m$ and $k$ is obtained when storing
        \begin{equation}
        n_C^{*} \approx \left(\frac{m}{k} - 1 \right) \, \ln 2
        \end{equation}
    items in the filter. Similarly,
        \begin{equation}
        k_C^{*} \approx \frac{m \, \ln 2}{n + \ln 2}
        \end{equation}
    hash bits is close to the peak efficiency when $m$ and $n$ are fixed.
\end{remark}

    \begin{figure}
    	\begin{overpic}[scale=0.35]{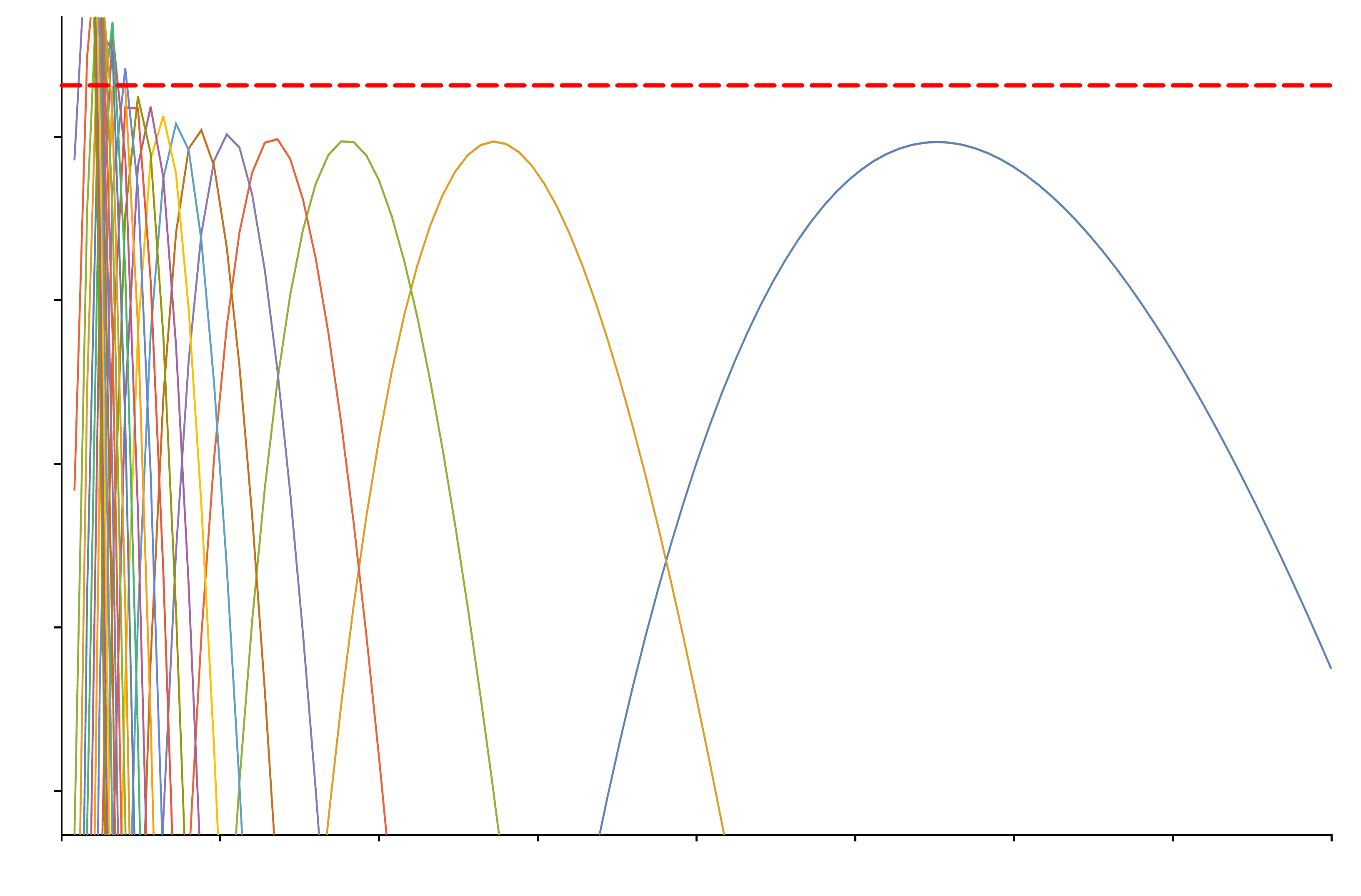}
		    \put(-5,181.5){0.69}
    	    \put(-5,142){0.68}
        	\put(-5,102.5){0.67}
        	\put(-5,62.5){0.66}
        	\put(-5,22.5){0.65}
        	\put(12.5,3){0}
        	\put(87,3){25}
        	\put(164,3){50}
        	\put(241,3){75}
        	\put(316,3){100}
            \put(328,13){$n$}
            \put(290,200){$\varepsilon = \ln 2$}
            \put(270,160){$k = 1$}
            \put(140,168){$k = 2$}
        \end{overpic}
        \caption{Efficiency of an $m=100$ bit classic Bloom filter storing $n$ items encoded using $k=1,2,\dots,20$ hashes. Maximal efficiency $\varepsilon_C^*(100)=0.96$ occurs at $n=1$ and $k=50$.}
        \label{fig:classic-efficiency}
    \end{figure}


\begin{corollary}
    \label{cor:asymp-eff}
    Asymptotic efficiencies for classic and standard Bloom filters are
        \begin{align*}
            \lim\limits_{m \rightarrow \infty} \varepsilon^*_S(m) = \ln 2 \approx 0.6931
            \quad   \mbox{and}  \quad
            \lim\limits_{m \rightarrow \infty} \varepsilon^*_C(m) = 1 \, ,
        \end{align*}
    respectfully.
\end{corollary}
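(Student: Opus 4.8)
The plan is to handle the two limits by different means: the standard case admits an exact closed form, whereas the classic case is best attacked by squeezing between an explicit lower bound and an information-theoretic ceiling.

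\emph{Standard filter.} I would simply invoke Corollary~\ref{cor:standard-max-efficiency}, which gives the exact value $\varepsilon_S^*(m) = 1/\bigl(m\log_2(m/(m-1))\bigr)$. The limit then reduces to evaluating $\lim_{m\to\infty} m\log_2(m/(m-1))$. Writing $\log_2(m/(m-1)) = \ln(1+\tfrac{1}{m-1})/\ln 2$ and expanding $\ln(1+x)=x+O(x^2)$ gives $m\log_2(m/(m-1)) = \tfrac{1}{\ln 2}\cdot\tfrac{m}{m-1}+O(1/m)\to 1/\ln 2$, so that $\varepsilon_S^*(m)\to\ln 2$. This step is entirely routine.

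\emph{Classic filter.} Rather than assume the (still open) Conjecture~\ref{con:classic-max-efficiency}, I would bound $\varepsilon_C^*(m)$ from both sides. The upper bound $\varepsilon_C^*(m)\le 1$ is precisely Walker's information-theoretic limit $0<\varepsilon\le 1$. For a matching lower bound I would exhibit the single configuration $n=1$, $k=\lfloor m/2\rfloor$: inserting one committee of size $k$ sets exactly $k$ distinct bits, so by~\eqref{eqn:classic-bloom-fp} (equivalently, by direct bit-counting) the false-positive rate is $f_C(m,1,k)=\binom{k}{k}/\binom{m}{k}=\binom{m}{k}^{-1}$, whence $\varepsilon_C(m,1,k)=\tfrac{1}{m}\log_2\binom{m}{k}$. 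Because $\varepsilon_C^*(m)$ is the maximum over all admissible $(n,k)$, this single evaluation is a genuine lower bound. Stirling's formula $\binom{m}{\lfloor m/2\rfloor}\sim 2^m/\sqrt{\pi m/2}$ then yields $\tfrac{1}{m}\log_2\binom{m}{\lfloor m/2\rfloor} = 1-\tfrac{\log_2(\pi m/2)}{2m}+o(1/m)\to 1$, and the squeeze theorem forces $\varepsilon_C^*(m)\to 1$.

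The only mildly delicate point, and the closest thing to an obstacle, is recognizing that the conjecture is \emph{not} needed: one lower-bounding configuration together with Walker's ceiling suffices. A secondary technicality is the parity of $m$ in the classic case, since $m/2$ is not an integer for odd $m$; I would use $k=\lfloor m/2\rfloor$ and note that $\binom{m}{\lfloor m/2\rfloor}$ differs from the idealized central coefficient by only a bounded factor, leaving the asymptotic $\tfrac{1}{m}\log_2\binom{m}{\lfloor m/2\rfloor}\to 1$ intact. Structuring the argument this way keeps it independent of Conjecture~\ref{con:classic-max-efficiency}, resting only on the proven false-positive formula~\eqref{eqn:classic-bloom-fp}, Walker's bound, and Stirling's approximation.
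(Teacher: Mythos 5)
Your standard-filter argument is exactly the paper's: both invoke the closed form $\varepsilon_S^*(m) = 1/\bigl(m\log_2\frac{m}{m-1}\bigr)$ from Corollary~\ref{cor:standard-max-efficiency} and evaluate the routine limit. For the classic filter, however, you take a genuinely different --- and logically stronger --- route. The paper's proof substitutes $\varepsilon_C^*(m)=\frac{1}{m}\log_2\binom{m}{m/2}$ and applies the Wallis asymptotic $\binom{2m}{m}\sim 4^m/\sqrt{\pi m}$; but that exact formula for $\varepsilon_C^*(m)$ is precisely the content of Conjecture~\ref{con:classic-max-efficiency}, which the paper leaves open, so as written the classic half of the corollary is conditional on an unproven statement. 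Your squeeze avoids this: the single witness $(n,k)=(1,\lfloor m/2\rfloor)$, whose false-positive rate $\binom{m}{\lfloor m/2\rfloor}^{-1}$ follows from \eqref{eqn:classic-bloom-fp} since $\nabla^{k}\bigl[\binom{x}{k}\bigr]_{x=m}=1$, yields the lower bound $\varepsilon_C^*(m)\ge\frac{1}{m}\log_2\binom{m}{\lfloor m/2\rfloor}$ directly from the definition $\varepsilon_C^*(m)=\max_{n,k}\varepsilon_C(m,n,k)$, and Walker's ceiling $\varepsilon\le 1$ closes the sandwich --- no optimality claim about $(1,\lfloor m/2\rfloor)$ is required. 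You also handle the parity of $m$, which the paper silently ignores by writing $\binom{m}{m/2}$ for all $m$. The one trade-off: your argument leans on Walker's information-theoretic bound, which the paper cites rather than proves, whereas the paper's (conditional) version needs no upper bound at all because it assumes the exact maximum. On balance your proof is correct, rests only on results actually established in the paper plus the cited Walker bound, and is the version the corollary should have.
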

\begin{proof}
    Both $\varepsilon^*_S(m)$ and $\varepsilon^*_C(m)$ are strictly increasing in $m$.
    Using an approximation for central binomial coefficients based on the Wallis product, $\binom{2m}{m} \sim \frac{4^{m}}{\sqrt{\pi m}}$ and
        \begin{equation*}
        \lim\limits_{m \rightarrow \infty} \varepsilon_C^* (m) = \lim\limits_{m\rightarrow\infty} \, \frac{1}{m} \log_2 \left( \frac{2^m}{\sqrt{\pi m / 2}} \right) = 1 \, .
        \end{equation*}
    For the standard Bloom filter,
        \begin{equation*}
        \lim\limits_{m \rightarrow \infty} \varepsilon_S^* (m) 
            = \lim\limits_{m \rightarrow \infty} \frac{1}{ \log_2 \left ( 1 + \frac{1}{m-1} \right)^m } = \ln 2 \,
        \end{equation*}
    which agrees with \cite{Weaver2014SatFilter}.
\end{proof}


\subsection{Valley Efficiency}
\label{subsec:valley-efficiency}

For fixed $m$ and $n$, peak efficiency is lowest when $n \approx m$ or the efficiency with $k$ and $k+1$ hash bits are equal; see Fig~\ref{fig:standard-efficiency} and Fig~\ref{fig:classic-efficiency}.
The following theorem uses approximation \eqref{eqn:fp_optimal_approx} to help characterize such intersection points.


\begin{theorem}
    \label{lem:hashing-efficency}
    For $k>0$,
    	\begin{equation}
        \label{eq:hashes-equal}
        \left(1 - e^{-k x} \right)^k = \left( 1 - e^{-(k+1)x} \right)^{k+1}
    	\end{equation}
    at $x=0$ and
        \begin{equation}
        x = \ln \left( \sqrt[k+1]{1+\sqrt[k+1]{1+\sqrt[k+1]{1+\cdots}}} \right) \, .
        \end{equation}
\end{theorem}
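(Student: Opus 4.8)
The plan is to convert the transcendental identity into a purely algebraic one through the substitution $t = e^{x}$, under which the equation becomes
\begin{equation*}
\left( 1 - t^{-k} \right)^{k} = \left( 1 - t^{-(k+1)} \right)^{k+1} .
\end{equation*}
The first claimed solution is immediate: $x=0$ corresponds to $t=1$, and then both sides equal $0^{k} = 0$. All the real content lies in the second, nontrivial root, so the remainder of the argument concentrates there.

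Next I would identify $e^{x}$ with the value of the nested radical. Writing $R = \sqrt[k+1]{1 + \sqrt[k+1]{1 + \cdots}}$, the self-similar structure formally gives $R = \sqrt[k+1]{1 + R}$, i.e.\
\begin{equation*}
R^{\,k+1} = 1 + R ,
\end{equation*}
so $t = R$ should be a root of $g(t) := t^{k+1} - t - 1$. Before exploiting this relation I must justify that the infinite radical converges to a well-defined limit and that this limit is the intended root. I would do so by analysing the iteration $R_{0} = 1$, $R_{j+1} = \phi(R_{j})$ with $\phi(r) = (1+r)^{1/(k+1)}$. The map $\phi$ is increasing, and on $[1,\infty)$ the polynomial $g$ satisfies $g(1) = -1 < 0$, $g(r) \to +\infty$, and $g'(r) = (k+1)r^{k} - 1 > 0$, yielding a unique root $t^{*} > 1$. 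Since $\phi(t^{*}) = t^{*}$ and $\phi(1) = 2^{1/(k+1)} > 1$, a monotone-and-bounded argument shows $R_{j}$ increases and stays below $t^{*}$, hence $R_{j} \uparrow t^{*}$; thus $R = t^{*}$ and $e^{x} = t^{*}$ with $(t^{*})^{k+1} = 1 + t^{*}$.

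Finally I would verify the identity by substituting $t = t^{*}$ and collapsing both sides using $t^{k+1} = 1+t$. From this relation, $t^{k} = (1+t)/t$, so
\begin{equation*}
1 - t^{-k} = \frac{t^{k} - 1}{t^{k}} = \frac{1}{1+t} , \qquad 1 - t^{-(k+1)} = \frac{t^{k+1} - 1}{t^{k+1}} = \frac{t}{1+t} .
\end{equation*}
Hence the left side is $(1+t)^{-k}$, while the right side is $\left( t/(1+t) \right)^{k+1} = t^{k+1}/(1+t)^{k+1} = (1+t)/(1+t)^{k+1} = (1+t)^{-k}$, matching the left side exactly. This confirms that $x = \ln t^{*} = \ln R$ solves the equation.

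The main obstacle is the convergence and well-definedness of the infinite nested radical together with its identification with the correct root $t^{*}$ of $t^{k+1} = 1 + t$; once the fixed-point analysis of $\phi$ is in place, the $x=0$ case and the concluding algebraic collapse are both routine.
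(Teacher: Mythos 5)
Your proof is correct and takes essentially the same route as the paper's: substitute $t = e^{x}$, reduce the identity to the Lambert trinomial $t^{k+1} - t - 1 = 0$, and verify both sides collapse to $(1+t)^{-k}$ using that relation. The one point where you go beyond the paper is your monotone fixed-point argument proving that the nested radical converges to the unique root $t^{*}>1$ — the paper merely remarks that the recursion $z = \sqrt[k+1]{1+z}$ can be iterated to a numerical solution, so your added rigor is a genuine (if minor) improvement rather than a different approach.
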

\begin{proof}
    For $x>0$, let $z=e^x$. 
    We claim solutions of the Lambert trinomial $z^{k+1} - z - 1 = 0$ satisfy equation~\eqref{eq:hashes-equal}, which is verified by the fact that
        \begin{equation*}
        \left(1 - \frac{1}{z^k} \right)^k = \left( \frac{z^k - 1}{z^k} \right)^k = \left( \frac{1/z}{z^k} \right)^{k} = z^{-k(k+1)}
        \end{equation*}
    agrees with
        \begin{equation*}
        z^{-k(k+1)} = \left( \frac{z}{z^{k+1}} \right)^{k+1} = \left( \frac{(z^{k+1} - z - 1 ) + z}{z^{k+1}} \right)^{k+1} = \left(1 - \frac{1}{z^{k+1}}\right)^{k+1} \, .
        \end{equation*}
    The formula $z = \sqrt[k+1]{1+z}$ can be used recursively to reach a numerical solution for $z$ and $x=\ln(z)$.
\end{proof}


\subsection{Compressed Filter Efficiency}
\label{sec:compressed-bloom-efficiency}

Filters with a parity bias have lower entropy and are good candidates for additional compression.
From Corollary~\ref{con:classic-max-efficiency}, a classic Bloom filter reaches maximum efficiency when a single item is encoded using half the filter bits, i.e., $n=1$ and $k=\frac{m}{2}$.
For a fixed $k < \frac{m}{2}$, there are $\binom{m}{k}$ ways a single item, $n=1$, can be stored and each possibility can be identified by an integer between 1 and $\binom{k}{k}$.
It takes at most $\log_2 \binom{m}{k}$ bits to represent these integers, and this achieves optimal lossless compression for the filter.
Since no information is lost, the compressed filter retains its false positive rate of $p=\binom{m}{k}^{-1}$ and achieves Walker's \cite{Walker2007Filters} information theoretic limit of efficiency $\varepsilon=1$ for all $k \le \frac{m}{2}$ when $n=1$.

Ultimately, the compression method determines efficiency and performance of a compressed Bloom filter, but the trade-offs may be worthwhile when transmitting or archiving a filter for long term storage.
For more details, see \cite{Mitzenmacher2002Compressed} where arithmetic codes are used to compress standard Bloom filters.

\section{Discussion}
\label{sec:discuss}

The multivariate union and intersection committee distributions can be used to construct inclusion-exclusion models for complex sampling problems.
If a sample is drawn from a committee distribution, the statistics in Section~\ref{subsec:estimators} can be used to estimate its hidden parameters.
Linear regression on the ratio of sampled factorial or binomial moments, as described in \cite[\S 4.4]{Burns2014Recursive}, can also be used for distribution fitting and parameter estimation for unknown occupancy models since the committee distribution is a GHFMD.

The models in this paper have assumed that urn occupancy is an exchangeable event, but that may not always be the case.
Kalinka~\cite{Kalinka2014Probability} shows how complicated the committee intersection model can become with the condition that one urn is twice as likely to become occupied as all other urns.
Rather than distributing a preset number of balls into the urns, Charalambides~\cite[Ch.4]{Charalambides2005Combinatorial} considers a model where the number of balls cast into an urn is an i.i.d. random variable and obtains analogous p.m.f.s and binomial moments for the resulting occupancy distributions.

Bloom filters are rarely described using urn models and, thus far, have yet to be characterized as committee problems.
The false-positive rate and efficiency formulas have clear presentations in terms of the raw and binomial moments of the committee distribution, and this probabilistic approach seems promising even though there are outstanding optimization problems.

What is clear, the exact false positive rates for both the classic and standard Bloom filters demonstrate that the recommended number $k=\frac{m}{n} \ln 2$ overshoots the optimal number of hash bits causing more work for a sub-optimal result.
A filter of $m=1024$ bits that stores $n=5$ items minimizes its false positive rate using $k^*_S=133$ hash bits for a standard Bloom filter and $k^*_C=124$ hash bits for a classic Bloom filter rather than the estimated $k^*=142$.
For the standard Bloom filter, this misconfiguration increases the false positive rate by 15.7\% and decreases the efficiency by 0.2\%.
Similarly for the classic Bloom filter, misconfiguration increases the false positive rate by 106.9\% and decreases the efficiency by 0.7\%.
For these values, the false-positive rates for this filter are on the order of $p=10^{-43}$ which is substantially smaller than the rates used in practical applications. 
Fig.~\ref{fig:fp-vs-k} shows that the optimal number of hash bits converges to the estimate as $n$ increases, so the main drawback from over hashing is the additional ``everytime work'' spent on each filter operation.

It is surprising how a subtle difference in hash construction can cause an asymptotic change in efficiency.
Each bit collision translates to information loss and a decrease in filter efficiency. 
This intuitively explains why Bloom-g filters \cite{Qiao2011One,Qiao2014Fast} -- which separate the marked bits for small groups of items -- are more efficient that standard Bloom filters.

From an information theoretic standpoint, a code that maximizes entropy will uniformly distribution its alphabet; for binary strings this means equal bit parity.
However, the efficiency lost by overfilling a Bloom filter -- i.e., having more 1s than 0s -- is greater than underfilling the same filter, so the expected bit sum of filter constructed using optimal parameters is slightly less than half the filter length.
This also explains why the classic Bloom filter is optimized using fewer hash bits than the standard Bloom filter; the classic Bloom filter is better at avoiding collisions and reaches bit parity quicker than the standard Bloom filter.
In practice, Bloom filters are usually build iteratively and extra items can be stored in a new filter once bit parity is reached to avoid overfilling.

Most practical questions concerning Bloom filters can be answered using Table~\ref{tab:bloom-as-occupancy}, the exact false-positive formulas \eqref{eqn:standard-bloom-fp} and \eqref{eqn:classic-bloom-fp}, the recursive relations \eqref{eqn:standard-bloom-recurrence} and \eqref{eqn:classic-bloom-recurrence}, the bounds in Corollary~\ref{cor:fp-upper-lower-bounds}, or the estimate \eqref{eqn:standard-fp-lower-bound}. 
Conjectures~\ref{con:optimal-ks} and \ref{con:classic-max-efficiency} highlight that certain fundamental questions about Bloom filter optimization remain open.

\section*{Acknowledgements}
\label{sec:ack}

Special thanks to Colin McRae for a useful discussion on compressed Bloom filters and Ryan Speers for his feedback.

    \bibliography{main}
    \bibliographystyle{imsart-number}

\end{document}